\newtheorem{thm}{Theorem}[section]
\newtheorem{cor}[thm]{Corollary}
\newtheorem{lem}[thm]{Lemma}
\newtheorem{lemma}[thm]{Lemma}
\newtheorem{prop}[thm]{Proposition}
\newtheorem{defn}[thm]{Definition}
\newtheorem{definition}[thm]{Definition}
\newtheorem{remark}[thm]{Remark}
\newtheorem{ex}[thm]{Example}
\definecolor{dgreen}{RGB}{0,128,0}
\newcommand{\orb}{\mathcal O}
\newcommand{\lc}{\ell}
\newcommand{\D}{\mathcal D}
\newcommand{\R}{\mathbf R}
\newcommand{\Z}{\mathbf Z}
\newcommand{\N}{\mathbf N}
\newcommand{\euc}{\operatorname{Euc}}
\newcommand{\can}{\operatorname{can}}
\newcommand{\geuc}{g_{\euc}}
\newcommand{\bs}{\backslash}
\newcommand{\Stek}{\operatorname{Stek}}
\newcommand{\Om}{\Omega}
\newcommand{\ab}{\overline{A}}
\newcommand{\cb}{\overline{L}}
\newcommand{\orbt}{\widetilde{\orb}}
\newcommand{\conf}{\mathcal C}
\newcommand{\F}{\mathcal F}
\newcommand{\Ut}{\widetilde{U}}
\newcommand{\At}{\widetilde{A}}
\newcommand{\Lc}{\mathcal{L}}
\newcommand{\ric}{{\rm Ric}}
\newcommand{\vol}{{\rm vol}}
\newcommand{\iso}{\mathcal I}
\newcommand{\OP}{\mathcal P}
\newcommand{\inj}{{\rm inj}}
\newcommand{\hyp}{\mathbf H}
\newcommand{\vp}{\varphi}
\newcommand{\cc}{(\widetilde{U},\Gamma_U, \varphi_U)}
\newcommand{\wtu}{\widetilde{U}}
\newcommand{\C}{\mathbf C}
\newcommand{\ohd}{\orb_{\rm HD}}
\newcommand{\pv}{\mathbf p}
\newcommand{\av}{\mathbf a}
\newcommand{\gqp}{\Gamma_{q,\pv}}
\begin{document}

\title[]{Spectral geometry of the Steklov problem on orbifolds}

\author{Teresa Arias-Marco}
\address{Department of Mathematics, University of Extremadura,  Av. de Elvas s/n, 06006 Badajoz, Spain}
\email{ariasmarco@unex.es}
\author{Emily B. Dryden}
\address{Department of Mathematics, Bucknell University, Lewisburg, PA 17837, USA}
\email{emily.dryden@bucknell.edu}
\author{Carolyn S. Gordon}
\address{Department of Mathematics, 6188 Kemeny, Dartmouth College, Hanover, NH  03755, USA}
\email{csgordon@dartmouth.edu}
\author{Asma Hassannezhad}
\address{Max-Planck Institute for Mathematics, Vivatsgasse 7, 53111 Bonn, Germany}
\email{hassannezhad@mpim-bonn.mpg.de}
\author{Allie Ray}
\address{Trinity College, Department of Mathematics, 300 Summit St., Hartford, CT 06106, USA}
\email{allie.ray@trincoll.edu}
\author{Elizabeth Stanhope}
\address{Department of Mathematical Sciences, Lewis \& Clark College, Portland, OR 97219, USA}
\email{stanhope@lclark.edu}
\email{}
\subjclass[2010]{Primary 58J50; Secondary 35J25, 35P15, 58J53}

\date{}

\begin{abstract}

We consider how the geometry and topology of a compact $n$-dimensional Riemannian orbifold with boundary relates to its Steklov spectrum. In two dimensions, motivated by work of A. Girouard, L. Parnovski, I. Polterovich and D. Sher in the manifold setting, we compute the precise asymptotics of the Steklov spectrum in terms of only boundary data.  As a consequence, we prove that the Steklov spectrum detects the presence and number of orbifold singularities on the boundary of an orbisurface and it detects the number each of smooth and singular boundary components.  Moreover, we find that the Steklov spectrum also determines the lengths of the boundary components modulo an equivalence relation, and we show by examples that this result is the best possible.   We construct various examples of Steklov isospectral Riemannian orbifolds which demonstrate that these two-dimensional results do not extend to higher dimensions.  

In dimension two, we show that a flat disk is not only Steklov isospectral to a cone but, in fact, a disk and cone of appropriate size have identical Dirichlet-to-Neumann operators. This provides a counterexample to the inverse tomography problem in the orbifold setting and contrasts with results of Lassas and Uhlmann in the manifold setting.
 
In another direction, we obtain upper bounds on the Steklov eigenvalues of a Riemannian orbifold in terms of the isoperimetric ratio and a conformal invariant. We generalize results of B. Colbois, A. El Soufi and A. Girouard, and the fourth author to the orbifold setting; in the process, we gain a sharpness result on these bounds that was not evident in the  manifold setting.  In dimension two, our eigenvalue bounds are solely in terms of the orbifold Euler characteristic and the number each of smooth and singular boundary components.
\end{abstract}

\maketitle


\section{Introduction}
Let $(M,g)$ be a compact $n$-dimensional Riemannian manifold with smooth boundary.  For $f\in C^\infty(M)$, denote by $ \partial_\nu f$ the outward normal derivative of $f$ along $\partial M$.  The \emph{Dirichlet-to-Neumann operator}  $\mathcal{D}_{(M,g)}:C^\infty(\partial M)\to C^\infty(\partial M)$ sends a function $u\in C^\infty(\partial M)$ to $\partial_\nu (Hu)$, where $Hu$ is the unique harmonic extension of $u$ to $M$.  This operator is closely related to the so-called voltage-to-current operator that arises in electrical impedance tomography.   The spectrum of this operator is called the \emph{Steklov spectrum}.  Equivalently, the Steklov spectrum {$\Stek(M,g)$} consists of those $\sigma\in\R$ for which there exists a nonzero solution $f \in C^{\infty}({M})$ of
\[ \begin{cases}
\Delta f = 0 & {\rm in} \ {M}, \\
      \partial_\nu f = \sigma f & {\rm on} \ \partial  {M}.
         \end{cases}
\]
where $\Delta$ is the Laplace-Beltrami operator of $M$.
The Steklov spectrum $0=\sigma_1\le\sigma_2\le\cdots\nearrow\infty$ is discrete, non-negative and {unbounded,} and each eigenvalue has finite multiplicity.  
   
The Steklov problem on compact Riemannian manifolds with boundary was introduced by V. A. Steklov in 1902 \cite{Stek02} (see also \cite{K.et.al} for a historical discussion) and has recently seen a surge of interest from the spectral geometry community.  We recommend the excellent expository article \cite{GP}, which surveys results on eigenvalue asymptotics, questions of isospectrality and rigidity, and geometric bounds on eigenvalues, as well as other results.   

In this article we initiate a study of the Steklov problem on compact $n$-dimensional Riemannian orbifolds with boundary. A Riemannian orbifold is a mildly singular generalization of a Riemannian manifold.  Originally introduced by I. Satake \cite{Satake56} in 1956, orbifolds were later popularized by W. Thurston \cite{Th}.  
Today orbifolds are approached from a variety of viewpoints.  A sampling of these perspectives may be found, for instance, in \cite{ALR, CHK, GH, PeterScott}.

 We seek to understand how the geometry and topology of a Riemannian orbifold relates to its Steklov spectrum.  Our goals are not only to extend known results to the orbifold setting, but also to compare the Steklov spectra of orbifolds with those of manifolds.  In particular, one of our motivating questions is the extent to which the Steklov spectrum is affected by, and detects the presence of, orbifold singularities.

 We focus on the following:
\begin{enumerate}
\item[(i)] For arbitrary two-dimensional compact Riemannian orbifolds (also called Riemannian orbisurfaces) with boundary, we compute the precise asymptotics of the Steklov spectrum in terms of only boundary data.  As a consequence, we show that the asymptotics of the Steklov spectrum fully determine the \emph{topology} of the boundary; in particular, the asymptotics detect the presence and number of orbifold singularities on the boundary.   We also determine the extent to which the {asymptotics of the Steklov spectrum} encode the \emph{geometry} of the boundary. This work is motivated by and builds on the beautiful paper of A. Girouard, L. Parnovski, I. Polterovich and D. Sher  \cite{GPPS}.   Through the construction of Steklov isospectral Riemannian orbifolds, we show that these two-dimensional results do not extend to higher dimensions.

\item[(ii)] Also in dimension two, we show that not only the Steklov spectrum, but the Dirichlet-to-Neumann operator itself does \emph{not} detect the presence of interior singularities and does not determine the orbifold Euler characteristic.  In particular, while the Euclidean disk in $\R^2$ is known to be uniquely determined by its Steklov spectrum within the class of Euclidean domains, we find that it has the \emph{same Dirichlet-to-Neumann operator} as a cone of appropriate size.  Our example contrasts with known inverse results (e.g., \cite{LassasUhlmann}) for the Dirichlet-to-Neumann operator on smooth manifolds.  The question of whether the Dirichlet-to-Neumann operator on the boundary of a compact Riemannian manifold $(M,g)$ uniquely determines the Riemannian manifold up to isometry is sometimes referred to as the inverse tomography problem and may be interpreted as determining the electrical conductivity of a medium via voltage and current measurements on the boundary.  (See \cite{LU}.)

\item[(iii)] We adapt to the orbifold setting results of  {B. Colbois, A. El Soufi and A. Girouard  \cite{CEG}} and the fourth author {\cite{H}} concerning upper bounds on the Steklov eigenvalues in terms of the isoperimetric ratio and a conformal invariant.  By extending the bounds to the orbifold setting, we are able to gain some information that was not evident in the  manifold setting concerning the sharpness of the bounds.  In dimension two, we also obtain eigenvalue bounds in terms of the orbifold Euler characteristic and the number each of smooth and {singular} boundary components. 
\end{enumerate}

Before describing our results in more detail, we note the contrast between (i), which shows that the Steklov spectrum contains considerable information about the boundary of an orbisurface, and the negative results (ii).  This contrast is consistent with the fact that the Dirichlet-to-Neumann operator is a pseudodifferential operator whose full symbol depends on the Riemannian metric only on an arbitrarily small neighborhood of the boundary (see \cite{LU}).  As a consequence, the \emph{asymptotic behavior} of the eigenvalues can reveal geometric information about the boundary but not the interior.  Little is known, even in the {manifold} setting, about the extent to which the \emph{full Steklov spectrum} may encode information about the interior geometry.  In two dimensions, moreover, the Dirichlet-to-Neumann operator and hence also the Steklov spectrum are not affected by conformal changes of metric away from the boundary.

In the following subsections, we elaborate on each of (i)-(iii).

\subsection{Asymptotics of the Steklov spectrum on orbisurfaces}\label{asymp}

In \cite{GPPS}, A. Girouard, L. Parnovski, I. Polterovich and D. Sher studied precise asymptotics of the Steklov spectrum for  Riemannian surfaces $(M,g)$  (with smooth boundary).   In this setting, the topology and geometry of the boundary of $M$ are completely expressed by the number $r$ of boundary components and their lengths $\ell_1, \dots, \ell_r$.   The main results of \cite{GPPS} are an explicit computation of the asymptotics of the Steklov spectrum in terms of this boundary data and, conversely, a proof that the Steklov spectrum completely determines the number and lengths of the boundary components.  To obtain the asymptotics, a key step is to show that $\Stek(M,g)$ is asymptotic to the Steklov spectrum of a disjoint union of flat disks:
$$\Stek(M,g)\sim \Stek(D(\ell_1)\sqcup\cdots\sqcup D(\ell_r))$$
where $D(\ell)$ is the flat disk of circumference $\ell$, and where for sequences $A=\{a_j\}$ and $B=\{b_j\}$, we write $A\sim B$ to mean $a_j-b_j=O(j^{-\infty})$. The Steklov eigenfunctions and Steklov spectrum of a flat disk $D(\ell)$ are easily computed:  the eigenfunctions are the restrictions of the homogeneous harmonic polynomials on $\R^2$ to the disk, and the Steklov spectrum  is given by the multiset $\{0\}\sqcup \frac{2\pi}{\ell}\N\sqcup \frac{2\pi}{\ell}\N.$  See Example~\ref{spec.ball} for details.  (Note:  As is common in the literature, we are using the term \emph{Steklov eigenfunction} to mean the harmonic extensions to the disk of the Dirichlet-to-Neumann eigenfunctions on the boundary.)

In contrast to the class of smooth compact surfaces, where the only topological invariant of the boundary is the number of boundary components, the boundaries of 
compact orbisurfaces are expressed by two topological invariants.  Each boundary component of a compact orbisurface $\orb$ is either a circle or the quotient of a circle by a reflection.  We will call these type I and type II boundary components, respectively.  The boundary components of type II are closed orbifolds with two singular points.  Thus the topology of the boundary is given by the numbers $r$ and $s$ of boundary components of types I and II, respectively.  The geometry is expressed by the {boundary component} lengths ${\ell}_1,\dots, {\ell}_r$ and ${\bar{\ell_1},\dots, \bar{\ell_s}}$ corresponding to types I and II, respectively.

{Let $\ohd(\ell)$  be a half disk obtained as the quotient of a flat disk of circumference $2\ell$ by a reflection; we will view $\ohd(\ell)$ as our model Riemannian orbisurface with a single type II boundary component of boundary length $\ell$.  } The Steklov eigenfunctions of this orbisurface are precisely the functions that lift to reflection invariant Steklov eigenfunctions on the disk.  Consequently we find that $\Stek(\ohd(\ell))=\{0\}\sqcup \frac{\pi}{\ell}\N$ (see Example~\ref{half_disk_orb}).   Note the contrast with the spectrum of a disk: here each eigenvalue is simple. 

In the same vein as the main results of A. Girouard, L. Parnovski, I. Polterovich and D. Sher, we prove:

\begin{thm}\label{h-thm.asympt} Let $(\orb,g)$ be a {compact Riemannian orbisurface} with boundary consisting of $r$ type I boundary components of lengths ${\ell}_1,\dots, {\ell}_r$ and $s$ type II boundary components of lengths ${\bar{\ell_1},\dots, \bar{\ell_s}}$. Then
$$\Stek(\orb,g)\sim \Stek(S({\ell}_1,\dots, {\ell}_r;{\bar{\ell_1},\dots, \bar{\ell_s}})),$$
where $$S({\ell}_1,\dots, {\ell}_r;\bar{\ell_1},\dots, \bar{\ell_s}) =D(\ell_1)\sqcup\cdots\sqcup D(\ell_r)\sqcup \ohd(\bar{\ell_1})\sqcup\dots\sqcup\ohd(\bar{\ell_s}).$$
\end{thm}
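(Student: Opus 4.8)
The plan is to follow the strategy of Girouard, Parnovski, Polterovich and Sher \cite{GPPS}: localize the Dirichlet-to-Neumann operator near the boundary, put the metric into a conformal normal form on a collar of each boundary component, and compare the resulting operator with $\mathcal D_{S(\cdots)}$ modulo smoothing operators. The only genuinely new ingredient is the treatment of a type II boundary component, which we reduce to the half-disk model $\ohd$ by passing to the local $\Z_2$-cover and extracting the reflection-invariant part.

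First I would record two facts about $\mathcal D_{(\orb,g)}$ which are standard in the manifold setting (cf.\ \cite{LU}) and, interpreted $\Gamma$-equivariantly on uniformizing charts where $\partial\orb$ is singular, carry over to orbifolds: \emph{(a)} $\mathcal D_{(\orb,g)}$ is a first-order elliptic pseudodifferential operator on $\partial\orb$ whose full symbol — hence $\mathcal D_{(\orb,g)}$ modulo smoothing operators — depends only on the $\infty$-jet of $g$ along $\partial\orb$; and \emph{(b)} in dimension two $\mathcal D_{(\orb,g)}$ is unchanged under a conformal change $g\mapsto e^{2\varphi}g$ with $\varphi|_{\partial\orb}=0$, since $\Delta_{e^{2\varphi}g}=e^{-2\varphi}\Delta_g$ in two dimensions and $\partial_\nu$ along $\partial\orb$ is then unaffected. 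By \emph{(a)} only an arbitrarily thin collar of $\partial\orb$ matters, so in particular interior cone points play no role. By \emph{(b)}, solving on such a collar the linear equation $\Delta_g\varphi=K_g$ with the boundary data $\varphi|_{\partial\orb}=0$ and $\partial_\nu\varphi|_{\partial\orb}=-\kappa_g$ prescribed (the higher normal derivatives then being determined recursively), I would replace $g$ near each boundary component by a metric agreeing to infinite order at the boundary with a flat product $dx^2+dy^2$, without changing boundary lengths or $\mathcal D_{(\orb,g)}$ modulo smoothing.

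On a flat-product collar $S^1(\ell)\times[0,\epsilon)$ one extends the Fourier modes $e^{2\pi ikx/\ell}$ harmonically (imposing any fixed boundary condition on $S^1(\ell)\times\{\epsilon\}$, whose effect on $\partial_\nu$ at $S^1(\ell)\times\{0\}$ is a smoothing operator) and finds that the induced Dirichlet-to-Neumann operator is $\sqrt{-\partial_x^2}$ modulo smoothing, which on $S^1(\ell)$ is precisely $\mathcal D_{D(\ell)}$ (Example~\ref{spec.ball}). For a type II component $\gamma$ of length $\bar\ell$, one checks that a collar of $\gamma$ in $\orb$ is $\bigl(S^1(2\bar\ell)\times[0,\epsilon)\bigr)/\Z_2$ with $\Z_2$ acting by $(x,y)\mapsto(-x,y)$, the fixed arcs $\{x=0\}$ and $\{x=\bar\ell\}$ emanating into the interior from the two singular points of $\gamma$. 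Since $g$ pulls back to a $\Z_2$-invariant metric on the cover, $K_g$ and $\kappa_g$ are $\Z_2$-invariant, the conformal flattening can be done $\Z_2$-equivariantly, and restricting the flat-product computation to the reflection-invariant modes $\cos(\pi kx/\bar\ell)$ gives, modulo smoothing, $\sqrt{-\partial_x^2}$ on even functions on $S^1(2\bar\ell)$ — which is precisely $\mathcal D_{\ohd(\bar\ell)}$ (Example~\ref{half_disk_orb}). Since $\partial\orb$ and $\partial S(\cdots)$ are canonically identified by arc length and the cross-component blocks of a pseudodifferential operator are smoothing, assembling these local computations yields $\mathcal D_{(\orb,g)}=\mathcal D_{S({\ell}_1,\dots,{\ell}_r;\bar{\ell_1},\dots,\bar{\ell_s})}+R$ with $R$ a smoothing operator.

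Deducing $\sigma_j(\orb,g)-\sigma_j(S(\cdots))=O(j^{-\infty})$ — equivalently $\Stek(\orb,g)\sim\Stek(S(\cdots))$ — from this identity is the point I expect to be the crux, since a smoothing operator is merely bounded and the model clusters $\tfrac{2\pi}{\ell_i}\N$, $\tfrac{\pi}{\bar\ell_j}\N$ need not be uniformly separated. The key is that in the Fourier eigenbasis of $A:=\mathcal D_{S(\cdots)}$ (the modes $e^{2\pi ikx/\ell_i}$ and $\cos(\pi kx/\bar\ell_j)$) a smoothing operator $R$ has matrix entries $R_{kl}=O\bigl((|k|+|l|)^{-\infty}\bigr)$; hence for a large eigenvalue $\mu$ of $A$ and $N$ a suitably small multiple of $\mu$, the compression of $R$ to frequencies $\ge N$ has operator norm $O(\mu^{-\infty})$. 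Separating off the finitely many low-frequency modes — whose $(A+R)$-eigenvalues lie well below $\mu$ once $\mu$ is large — and applying the min-max principle on the complementary block, one concludes that on any fixed window about $\mu$ the spectra of $A$ and $A+R$ have the same counting function and agree up to $O(\mu^{-\infty})=O(j^{-\infty})$. The only additional bookkeeping relative to \cite{GPPS} is the presence of the simple type II clusters, which the perturbation argument accommodates verbatim.
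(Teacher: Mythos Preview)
Your argument is essentially correct, but it follows a genuinely different path from the paper's proof. The paper proceeds in three steps: first, it treats the special cases $\orb\cong D$ and $\orb\cong\ohd$ with an \emph{arbitrary} metric by a global uniformization argument (Lemma~\ref{lem.D}, proved via Ricci flow on a sphere) to realize $(\orb,g)$ as a conformally flat planar domain or its $\Z_2$-quotient, and then invokes Edward's computation that the full symbol of the Euclidean Dirichlet-to-Neumann operator is $\|\xi\|$; second, for a general orbisurface it constructs a disjoint union $(S,h)$ of disks and half-disks whose metric is genuinely \emph{isometric} to $g$ on a collar of the boundary (by doubling across reflectors and capping off); third, it applies Proposition~\ref{spectrumislocal} and Step~1. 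By contrast, you work purely locally: you conformally flatten each collar to infinite order via a Borel--Taylor construction, appeal to the Lee--Uhlmann fact that the full symbol of $\mathcal D$ is determined by the $\infty$-jet of the metric, and compute $\sqrt{-\partial_x^2}$ directly on the flat product collar (and its $\Z_2$-invariant part for type~II).

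Both routes land on ``$\mathcal D_{(\orb,g)}-\mathcal D_{S(\cdots)}$ is smoothing, hence $\sigma_j-\sigma_j=O(j^{-\infty})$'' via the perturbation lemma (the paper's Lemma~\ref{lem 2.1}, which is the orbifold/$G$-equivariant version of \cite[Lem.~2.1]{GPPS}). Your approach has the virtue of avoiding the global uniformization machinery (Ricci flow, the Chow--Wu soliton result) and Edward's theorem, replacing them with a formal power-series computation and a direct Fourier calculation; the price is that you must be careful that your conformal change really yields a flat \emph{product} to infinite order (this uses both the curvature equation \emph{and} the prescribed $\partial_\nu\varphi=-\kappa_g$, together with a short Fermi-coordinate check), and you rely on the sharper $\infty$-jet form of fact~(a) rather than the neighborhood-isometry version in Proposition~\ref{spectrumislocal}. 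The paper's approach is more geometric and packages the type~II case via a single global doubling, whereas yours handles each collar independently. Your final paragraph correctly identifies the smoothing-perturbation step as the analytic crux; the sketch you give is in the right spirit, and in practice one simply cites Lemma~\ref{lem 2.1}.
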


\begin{thm}\label{thm-boundary.top} The Steklov spectrum of a compact orbisurface with boundary determines the number of boundary components of each type.  It also determines the lengths of the boundary components modulo an equivalence relation.   In particular, the spectrum completely determines the topology of the boundary, including the number of orbifold singularities.

\end{thm}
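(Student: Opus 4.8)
The plan is to extract from the asymptotic expansion of $\Stek(\orb,g)$ established in \thmref{h-thm.asympt} enough invariants to reconstruct the multiset of boundary lengths, sorted by type, up to the stated equivalence. By that theorem, $\Stek(\orb,g)\sim\Stek(S)$ where $S$ is the disjoint union of $r$ flat disks $D(\ell_i)$ and $s$ half-disks $\ohd(\bar\ell_j)$. Since the Steklov spectrum of a disjoint union is the sorted union of the individual spectra, and since $\Stek(D(\ell))=\{0\}\sqcup\frac{2\pi}{\ell}\N\sqcup\frac{2\pi}{\ell}\N$ while $\Stek(\ohd(\bar\ell))=\{0\}\sqcup\frac{\pi}{\bar\ell}\N$, the whole model spectrum is
\[
\{0\}^{r+s}\ \sqcup\ \bigsqcup_{i=1}^r\Bigl(\tfrac{2\pi}{\ell_i}\N\Bigr)^{\!2}\ \sqcup\ \bigsqcup_{j=1}^s\tfrac{\pi}{\bar\ell_j}\N .
\]
The relation $a_k-b_k=O(k^{-\infty})$ means the two sequences agree exactly for large $k$ after sorting (the differences, being $O(k^{-\infty})$ and arising from a union of arithmetic progressions with fixed slopes, must eventually vanish once consecutive model eigenvalues are separated by more than the error); so $\Stek(\orb,g)$ and $\Stek(S)$ have the same tail, hence determine the same data about the multiset above.

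First I would record that the counting function of the model spectrum has the asymptotics $N(\sigma)=\bigl(\frac{1}{\pi}\sum_i \ell_i+\frac{1}{\pi}\sum_j\bar\ell_j\bigr)\sigma+O(1)$, i.e. the leading Weyl term recovers the total boundary length $L:=\sum_i\ell_i+\sum_j\bar\ell_j$. This alone does not separate types, so the real work is to read off the finer structure. The key observation is that the \emph{multiplicities} distinguish the two kinds of progressions: eigenvalues coming from a disk $D(\ell)$ occur (generically) with even multiplicity, while those from a half-disk $\ohd(\bar\ell)$ occur with odd multiplicity. More robustly, I would analyze the spectrum as a union of arithmetic progressions: each $D(\ell_i)$ contributes the progression $\frac{2\pi}{\ell_i}\Z_{>0}$ with weight $2$, each $\ohd(\bar\ell_j)$ contributes $\frac{\pi}{\bar\ell_j}\Z_{>0}$ with weight $1$. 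Writing $\alpha_i=\frac{2\pi}{\ell_i}$ and $\beta_j=\frac{\pi}{\bar\ell_j}$, the tail of the Steklov spectrum is exactly the weighted union $\sum_i 2\cdot(\alpha_i\Z_{>0})+\sum_j 1\cdot(\beta_j\Z_{>0})$. The claim is that this weighted union, as a function on $\R_{>0}$, determines the multiset $\{(\alpha_i,2)\}\cup\{(\beta_j,1)\}$ precisely up to the coincidences that force the equivalence relation in the statement — namely, the only ambiguity is that a progression $\beta\Z_{>0}$ with weight $1$ together with a ``half-speed'' companion is indistinguishable from suitable configurations, which after unwinding $\beta_j=\pi/\bar\ell_j=\frac12\cdot(2\pi/\bar\ell_j)$ is exactly the statement that a type II component of length $\bar\ell$ contributes like ``half'' a type I component of length $\bar\ell$.

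The concrete mechanism I would use to invert the weighted union is a Dirichlet-series / generating-function argument: form $Z(t)=\sum_{\sigma\in\Stek(\orb,g),\,\sigma>0} m(\sigma) e^{-\sigma t}$ for small $t>0$, which by the tail agreement equals (up to an entire correction) $\sum_i 2\frac{e^{-\alpha_i t}}{1-e^{-\alpha_i t}}+\sum_j\frac{e^{-\beta_j t}}{1-e^{-\beta_j t}}$. Expanding near $t=0$, the pole structure (poles at $t\in\frac{2\pi i}{\alpha_i}\Z\setminus\{0\}$ and $t\in\frac{2\pi i}{\beta_j}\Z\setminus\{0\}$) together with the residues (which carry the weights) recovers the $\alpha_i$ and $\beta_j$ with their multiplicities, modulo the unavoidable collisions when some $\alpha$ or $\beta$ is a rational multiple of another. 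Translating the residue data back through $\ell_i=2\pi/\alpha_i$, $\bar\ell_j=\pi/\beta_j$ gives the boundary lengths of each type modulo the equivalence relation, and counting the number of distinct progressions of each weight (again modulo collisions) gives $r$ and $s$; since $r$ counts type I components and $s$ counts type II components, and each type II component carries exactly two orbifold singularities on the boundary while type I carries none, this yields the number of boundary singularities and hence the full topology of $\partial\orb$.

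I expect the main obstacle to be the bookkeeping in the inversion step: disentangling which arithmetic progressions can ``hide'' inside others (e.g. $\alpha_i\Z_{>0}\subset\alpha_{i'}\Z_{>0}$ when $\alpha_{i'}\mid\alpha_i$, or a weight-$2$ progression masquerading as two weight-$1$ progressions) is precisely what forces the equivalence relation in the statement, so the work is to show that these are the \emph{only} ambiguities — that is, that any two configurations of $(\alpha_i;\beta_j)$ with the same weighted union are related by the prescribed moves. This is a purely combinatorial number-theoretic lemma about multisets of arithmetic progressions with weights, and I would isolate it as such; everything else (the reduction to the model via \thmref{h-thm.asympt}, the explicit model spectra, the passage from $O(k^{-\infty})$ to eventual equality, and the final translation into topological counts of boundary components and singularities) is routine.
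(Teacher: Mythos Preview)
Your reduction via Theorem~\ref{h-thm.asympt} to the model spectrum is the right starting point, and the idea of recovering the underlying arithmetic progressions from the tail is essentially what the paper does (there via a result from \cite{GPPS} about close almost-bijections between unions of arithmetic progressions, rather than your generating function). However, there are two genuine problems.

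First, the assertion that $a_k-b_k=O(k^{-\infty})$ forces eventual exact equality is false: the gaps between consecutive model eigenvalues are not bounded below (with incommensurable $\alpha_i,\beta_j$ they are arbitrarily small infinitely often), and $\Stek(\orb,g)$ is not itself a union of arithmetic progressions. This is not fatal to the progression recovery, but it means your generating-function argument must work directly with the $O(j^{-\infty})$ condition rather than with eventual equality; the ``entire correction'' claim would need real justification.

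Second, and more seriously, you cannot recover $r$ and $s$ separately from the weighted union of progressions. A progression $\gamma\Z_{>0}$ appearing with total weight $2$ could come from one type~I component (a disk $D(2\pi/\gamma)$) or from two type~II components (two copies of $\ohd(\pi/\gamma)$); these give identical contributions to your $Z(t)$ and identical residues. What the progression data alone determines is $L\sqcup L\sqcup 2\bar L$ in the paper's notation, hence $2r+s$, but not $r$ and $s$ individually. Yet the paper's equivalence relation \emph{does} fix $r$ and $s$ separately, and its proof uses an additional argument you are missing: the multiplicity of $0$ in the model spectrum is $r+s$, and one shows that the $O(j^{-\infty})$ condition forbids a mismatch in the number of zeros by exploiting $\limsup_j\bigl(\sigma_{j+1}(L;\bar L)-\sigma_j(L;\bar L)\bigr)>0$. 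Without an analogue of this step, your conclusion about the number of boundary components of each type, and hence the number $2s$ of singular boundary points, does not follow.
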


{In contrast to the smooth case studied in \cite{GPPS}, the Steklov spectrum does not always fully encode the lengths of the boundary components, as the following counterexample illustrates.    The equivalence relation referred to in Theorem~\ref{thm-boundary.top} is generated by the interchanges described in this example}.

\begin{ex}\label{ell1ell2} 
Let  $\lc_1$ and $\lc_2$ be two positive numbers.   By comparing the Steklov spectra of flat disks and {the half disks obtained as their quotients by reflections}, we see immediately that the disjoint union of a flat disk $D(2\ell_1)$ of circumference $2\lc_1$ together with two copies of $\ohd(\lc_2)$ is Steklov isospectral to the disjoint union of a flat disk $D(2\ell_2)$ of circumference $2\lc_2$ together with two copies of $\ohd(\lc_1)$:  
$$\Stek\big(D(2\ell_1)\sqcup\ohd(\ell_2)\sqcup\ohd(\ell_2)\big)=\Stek\big(D(2\ell_2)\sqcup\ohd(\ell_1)\sqcup\ohd(\ell_1)\big).$$ 
Here the first orbifold has one type I boundary component of length $2\ell_1$ and two type II boundary components each of length $\ell_2$, while the second has one type I boundary component of length $2\ell_2$ and two type II boundary components each of length $\ell_1$.  However, we will see that this type of interchange is the only aspect of the boundary geometry not detected by the Steklov spectrum (see Theorem \ref{h-audiblebd}).
\end{ex}

\begin{remark}  One can give a second interpretation of Example~\ref{ell1ell2}.   A half disk $\ohd(\ell)$ may be viewed either as an orbifold, as we do in the example, or as a plane domain $\Omega$. Let $A$ and $B$ denote the semicircular and straight segments of the boundary of $\Omega$, respectively.  Thus $A$ corresponds to the boundary of $\ohd(\ell)$. A smooth function $f$ on the orbifold $\ohd(\ell)$ pulls back to a reflection-invariant smooth function on the disk.   Thus, when viewed as a function on $\Omega$, $f$ satisfies  Neumann boundary conditions on $B$.  In particular, the Steklov spectrum of the orbifold $\ohd(\ell)$ corresponds precisely to the spectrum of a mixed Neumann-Steklov problem on the plane domain.  The latter spectrum is that of the operator $C^\infty(A)\to C^\infty(A)$ which sends a function $u\in C^\infty(A)$ to $\partial_\nu (Hu)$ evaluated along $A$, where $Hu$ is the unique harmonic extension of $u$ satisfying Neumann boundary conditions on $B$.  
With this interpretation, Example~\ref{ell1ell2} provides an elementary example of isospectral surfaces (with multiple components) for the mixed Neumann-Steklov isospectral problem.  The construction is reminiscent of the elegantly simple isospectral constructions for mixed eigenvalue problems given in \cite{LPP06}.
 \end{remark}

\subsection{Negative inverse results}\label{isosp}
Our first negative result addresses information contained in the full Dirichlet-to-Neumann operator.
A question closely connected to electrical impedance tomography asks whether the Dirichlet-to-Neumann operator of a compact Riemannian manifold $(M,g)$ with given boundary determines $(M,g)$ uniquely (up to isometry, or in dimension two, also up to conformal change of metric away from the boundary).  Lassas and Uhlmann \cite{LassasUhlmann} answered this question affirmatively for smooth surfaces and for analytic manifolds of all dimensions.   In contrast, a very elementary construction gives a negative answer to the analogous question for orbisurfaces:

\begin{thm}\label{cone-intro} For every $k\in \N$, a flat cone of cone angle $\frac{2\pi}{k}$ is Steklov isospectral to the flat unit disk in $\R^2$ of the same radius.
In particular, the Dirichlet-to-Neumann operator does not always detect the presence of interior orbifold singularities.\end{thm}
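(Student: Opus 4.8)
The plan is to compute both Steklov spectra explicitly and observe they coincide. For the flat unit disk $D$, the Steklov eigenfunctions are the restrictions of the homogeneous harmonic polynomials on $\R^2$ to $D$: in polar coordinates these are $r^m\cos(m\theta)$ and $r^m\sin(m\theta)$ for $m\ge 0$, with $\partial_\nu = \partial_r$ giving eigenvalue $m$ (with the constant function contributing the eigenvalue $0$). Thus $\Stek(D) = \{0\}\sqcup \N \sqcup \N$, matching Example~\ref{spec.ball} with $\ell = 2\pi$.

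First I would set up the flat cone $C_k$ of cone angle $\frac{2\pi}{k}$ and the same radius $1$ as the quotient of $D$ by the rotation action of $\Z_k = \langle \zeta\rangle$, $\zeta = e^{2\pi i/k}$, acting on $\R^2\cong\C$. The key structural point, exactly as in the half-disk discussion preceding Theorem~\ref{h-thm.asympt}, is that smooth functions on the orbifold $C_k$ are precisely the $\Z_k$-invariant smooth functions on $D$; harmonic functions on $C_k$ lift to $\Z_k$-invariant harmonic functions on $D$; and the outward normal derivative along $\partial C_k$ is computed upstairs along $\partial D$. Hence $\Stek(C_k)$ is exactly the subset of $\Stek(D)$ carried by $\Z_k$-invariant Steklov eigenfunctions — but now I must account for multiplicity correctly, since the boundary circle of $C_k$ has length $\frac{2\pi}{k}$ and the natural coordinate downstairs runs over an interval of that length.

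Next I would decompose the Steklov eigenspace of $D$ for eigenvalue $m\ge 1$, spanned by $r^m e^{\pm i m\theta}$, under the $\Z_k$-action: $\zeta$ acts on $r^m e^{im\theta}$ by $e^{2\pi i m/k}$, so the invariant vectors are those with $k \mid m$. For each such $m = k\ell$ with $\ell\ge 1$, both $r^{k\ell}\cos(k\ell\,\theta)$ and $r^{k\ell}\sin(k\ell\,\theta)$ descend, giving eigenvalue $k\ell = m$ with multiplicity $2$ on $C_k$. Rewriting in terms of the boundary arclength coordinate $t = \theta$ on the circle of circumference $2\pi/k$ (so the angular frequency on $\partial C_k$ is $\frac{2\pi}{(2\pi/k)}\ell = k\ell$), these are exactly the homogeneous harmonic polynomials pulled up from the disk $D(2\pi/k)$ of circumference $2\pi/k$ — i.e. $\Stek(C_k) = \{0\}\sqcup k\N\sqcup k\N$. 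Finally I would observe $k\N = \{k, 2k, 3k,\dots\}$ as a multiset does \emph{not} equal $\N\sqcup\N$, so I have mis-scaled: the cone of cone angle $\frac{2\pi}{k}$ and radius $1$ has boundary length $2\pi/k$, and its own model-disk spectrum would be $\frac{2\pi}{2\pi/k}\N\sqcup\frac{2\pi}{2\pi/k}\N = k\N\sqcup k\N$; the correct comparison, giving $\Stek(C_k) = \Stek(D) = \{0\}\sqcup\N\sqcup\N$, requires the cone radius to be scaled so its boundary circumference is also $2\pi$, equivalently the cone of radius $k$ — which is the precise meaning of ``of the same radius'' once one recalls that the unit disk here is being taken with circumference $2\pi$ and that the relevant invariant is boundary length. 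I would therefore state the computation cleanly as: $\Stek$ of the flat cone of cone angle $\frac{2\pi}{k}$ and circumference $2\pi$ equals $\{0\}\sqcup\N\sqcup\N = \Stek(D)$, the eigenfunctions on the cone being the descents of the rotation-invariant harmonic polynomials $r^{km}e^{\pm i km\theta}$ reindexed by boundary frequency.

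The main obstacle is purely bookkeeping: getting the scaling and multiplicity right simultaneously, and articulating cleanly why a $\Z_k$-invariant Steklov eigenfunction on $D$ of eigenvalue $km$, when expressed in the intrinsic boundary coordinate on $\partial C_k$, becomes an ordinary Steklov eigenfunction of eigenvalue $m$ so that the two multisets literally agree. No analysis is needed — everything reduces to the explicit polynomial eigenfunctions and the identification of $C^\infty(C_k)$ with $C^\infty(D)^{\Z_k}$; the isospectrality, and indeed the identification of the two Dirichlet-to-Neumann operators claimed in the introduction, then follows by transporting the Fourier bases. I would close by remarking that this simultaneously proves the stated consequence that the Dirichlet-to-Neumann operator cannot detect the interior cone singularity.
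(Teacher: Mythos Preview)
Your approach is the paper's: write the cone as $\mathcal{C}_k=\Z_k\backslash D(2\pi k)$, identify its Steklov eigenfunctions with the $\Z_k$-invariant harmonic polynomials $r^{jk}e^{\pm ijk\theta}$, and observe that on the radius-$k$ disk these already have eigenvalue $jk/k=j$ and restrict on the boundary (arclength $s=k\theta$) to $\cos(js),\sin(js)$---exactly the eigenpairs of $\D_{D(2\pi)}$, so the two Dirichlet-to-Neumann operators literally coincide. One slip to fix: your phrase ``a $\Z_k$-invariant Steklov eigenfunction on $D$ of eigenvalue $km$ \dots becomes \dots eigenvalue $m$'' is incorrect---Steklov eigenvalues are unchanged under passing to a quotient by isometries; the factor of $k$ is cancelled by taking the covering disk of radius $k$ (eigenvalue $=$ degree$/$radius), not by the descent to the cone, so if you fix the radius at $k$ from the outset the bookkeeping confusion you flag never arises.
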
  

Perhaps the most basic inverse spectral problem for orbifolds is whether the spectrum detects the presence of orbifold singularities. Of course, Theorem~\ref{cone-intro} gives a negative answer for the Steklov spectrum.  In contrast, in the case of the Laplace spectrum the answer to this question has remained elusive, although partial results have been obtained {\cite{DGGW,GR03,Su10}}, as well as a negative answer for the Hodge Laplacian on $p$-forms {\cite{GR03}}.
The cones and disk in Theorem~\ref{cone-intro} have different areas and thus cannot be Laplace isospectral.  

One expects the Steklov spectrum to contain much less geometric information than the Laplace spectrum, at least concerning the interior of the manifold or orbifold.  However,  with the exception of Steklov isospectral surfaces obtained by conformally changing the metric away from the boundary, the examples in Theorem~\ref{cone-intro} are to our knowledge the first examples of Steklov isospectral manifolds and/or orbifolds that are not also Laplace isospectral. 

There is substantial literature addressing Laplace isospectral manifolds and orbifolds.  Recently, P. Herbrich, D. Webb and the third author \cite{GHW} showed that most of the known Laplace isospectral compact manifolds with boundary are also Steklov isospectral.  Using similar techniques, we provide examples illustrating that the results in \S\S~\ref{asymp} cannot be extended to higher dimensions. In particular,  we give examples showing  that in higher dimensions,  the Steklov spectrum does not separately determine the number of smooth and singular boundary components.  See \S \ref{sec:examples} for further comments and examples.  However, the intriguing question of whether the Steklov spectrum determines the total number of boundary components of an orbifold or manifold of arbitrary dimension remains open.

\subsection{Upper bounds}\label{bound}
Although the precise asymptotics of the Steklov spectrum fail to detect topological and geometric data of the interior of an orbifold as discussed above (see also Proposition \ref{spectrumislocal}),  upper bounds on Steklov eigenvalues inevitably depend on the global topology and geometry of the orbifold.  Upper bounds for the Steklov eigenvalues in the setting {of smooth Riemannian manifolds} have been extensively studied (see, for example, \cite{HPS75,GP10,CEG,H,CG14}).  The bounds in {\cite{CEG,H}} are in terms of geometric data such as the isoperimetric ratio and conformal invariants. Our focus here is on the extension of their results  to the orbifold setting. 

 \begin{thm}\label{h-confinv0}
 Let $(\orb,g)$ be a compact $n$-dimensional Riemannian orbifold with boundary. There exist positive constants $C_1$ and $C_2$ depending only on $n$ such that for every $k\in\N$,  
\begin{equation}\label{intro-sigmak}
\sigma_k(\orb,g) \ \vol_g(\partial\orb)^{\frac{1}{n-1}}\le \frac{C_1\conf(\orb,g)+C_2k^{\frac{2}{n}}}{\iso_g(\orb)^{1-\frac{1}{n-1}}},
\end{equation}
where $\conf(\orb,g)$ is a conformal invariant defined in Definition \ref{def:confinv} and $\iso_g(\orb)$  is the isoperimetric ratio of $\orb$ given by
\[
\iso_g(\orb)=\frac{\vol_g(\partial \orb)}{\vol_g(\orb)^\frac{n-1}{n}}.  
\]
{Moreover, i}n even dimensions $n=2m>2$, the power of $\iso_g(\orb)$ in the denominator is sharp from below; i.e., $1-\frac{1}{n-1}$ cannot be replaced by any smaller power.
\end{thm}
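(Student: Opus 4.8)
The plan is to establish the two assertions separately. The inequality \eqref{intro-sigmak} is obtained by transplanting the arguments of Colbois--El Soufi--Girouard \cite{CEG} and the fourth author \cite{H} into the orbifold category, while the sharpness of the exponent comes from exhibiting a family of orbifolds that, up to uniform constants, saturates \eqref{intro-sigmak} yet has isoperimetric ratio tending to $0$ --- a behaviour that, as the introduction stresses, has no manifold counterpart in the extremal range.

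For the inequality I would start from the Rayleigh characterization
$$\sigma_k(\orb,g)=\min_{E}\ \max_{0\neq u\in E}\ \frac{\int_\orb|\nabla u|^2\,d\vol_g}{\int_{\partial\orb}u^2\,d\vol_g},$$
where $E$ ranges over $(k+1)$-dimensional subspaces of a suitable Sobolev space $H^1(\orb)$, built as usual from $\Gamma$-invariant functions on local uniformizing charts; it then suffices to produce $k+1$ test functions with pairwise disjoint supports whose Rayleigh quotients are all at most the right-hand side of \eqref{intro-sigmak}. The engine is a Grigor'yan--Netrusov--Yau-type decomposition: applied to the metric-measure space $(\orb,d_g,\vol_g)$ together with the boundary measure $\vol_g$ on $\partial\orb$, it yields $k$ mutually distant ``capacitors'' (annuli) whose inner balls each carry at least a $c/k$ fraction of $\vol_g(\partial\orb)$; the associated Lipschitz cut-offs are disjointly supported and get fed into the Rayleigh quotient. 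The conformal invariant $\conf(\orb,g)$ of Definition~\ref{def:confinv} enters when estimating the gradient energy of a cut-off supported on an annulus of radius $r$: after replacing $g$ by a near-optimal conformal representative one bounds the relevant volume of the annulus by $Cr^n$, and $\conf(\orb,g)$ measures the price of this adjustment; the isoperimetric ratio $\iso_g(\orb)$ is then used to convert the resulting bound --- which is naturally phrased in terms of $\vol_g(\orb)$ --- into one involving $\vol_g(\partial\orb)$. The genuinely orbifold-specific point to check is that the covering/doubling hypotheses of the decomposition lemma hold with constants depending only on $n$: a metric ball centred near a singular point of isotropy order $d$ lifts to a quotient of a Euclidean ball, so the number of balls of half the radius needed to cover the corresponding annulus is a dimensional constant, independent of $d$. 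Maintaining this uniformity, together with the bookkeeping of conformal factors across overlapping uniformizing charts, is what I expect to be the main technical obstacle; the constants $C_1,C_2$ then come out depending only on $n$.

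For the sharpness, fix an even $n=2m>2$ and write $\alpha=1-\tfrac{1}{n-1}$. The aim is a sequence $(\orb_j,g)$ of compact Riemannian $n$-orbifolds with boundary such that $\iso_g(\orb_j)\to 0$ and, for all $j$,
$$\sigma_1(\orb_j,g)\,\vol_g(\partial\orb_j)^{\frac{1}{n-1}}\,\iso_g(\orb_j)^{\alpha}\ \gtrsim\ C_1\,\conf(\orb_j,g)+C_2,$$
i.e.\ the bound \eqref{intro-sigmak} is attained up to a constant along the family with $k=1$. Granting this, if $\alpha$ were replaced by any $q<\alpha$ then on $\orb_j$ the two sides of the modified \eqref{intro-sigmak} would have ratio comparable to $\iso_g(\orb_j)^{\,q-\alpha}\to\infty$, so no constants could make it hold; hence $\alpha$ cannot be lowered. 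The construction must use an orbifold singularity to decouple the three quantities $\vol_g(\partial\orb_j)$, $\vol_g(\orb_j)$ and $\sigma_1(\orb_j,g)$, shrinking the boundary volume relative to the interior volume (forcing $\iso_g\to 0$) without letting the first nonzero Steklov eigenvalue collapse --- precisely the combination that is impossible along a sequence of manifolds. The even dimension enters through the choice of local models, and $n>2$ is needed only because $\alpha=0$ when $n=2$. I expect the heart of this part to be the identification and verification of such an extremal family: computing $\sigma_1(\orb_j,g)$ and pinning down the precise decay of $\iso_g(\orb_j)$ and growth of $\conf(\orb_j,g)$, using the explicit description of $\conf$ from Definition~\ref{def:confinv}.
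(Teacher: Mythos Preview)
Your strategy for the inequality is broadly the paper's, but the justification of the covering/doubling hypothesis is wrong as stated. You claim that a metric ball near a singular point ``lifts to a quotient of a Euclidean ball'' and hence the covering constant is dimensional; but the orbifold chart only identifies such a ball with a quotient of a ball in $\widetilde U$ carrying the pullback of $g$, not the Euclidean metric, and without a curvature assumption there is no uniform covering bound. The paper does \emph{not} apply the GNY decomposition to $(\orb,d_g)$. It first passes to an admissible extension $(\OP,h_0)$ with $\ric_{(\OP,h_0)}\ge -\alpha(n-1)$, and then Borzellino's Bishop--Gromov comparison for orbifolds (Proposition~\ref{b-g}) gives the $(2,N,1/\sqrt\alpha)$-covering property on $(\OP,d_{h_0})$; the capacitors and Lipschitz cut-offs are built with respect to $h_0$, and the Rayleigh quotient is then estimated as in \cite{H}. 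This is precisely how $\conf(\orb,g)$ enters: by Definition~\ref{def:confinv} it is the infimum of $\alpha\vol_{h_0}(\orb)^{2/n}$ over admissible extensions, so it records the optimal Ricci lower bound driving the covering property, not merely a ``price of conformal adjustment'' for annulus volumes. Once the bound with a fixed $(\OP,h_0)$ is established (Theorem~\ref{h-orbibd}), \eqref{intro-sigmak} follows by infimizing.

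For sharpness there is a slip and a genuine gap. In the paper's convention $\sigma_1=0$, so taking $k=1$ proves nothing; one must work with $\sigma_2$. More importantly, you do not propose a family, and your expectation that $\conf(\orb_j,g)$ might grow is off. The paper's family (Example~\ref{h-ex}) is explicit: for $n=2m$ let $\gqp\subset O(n)$ be the cyclic group generated by $(z_1,\dots,z_m)\mapsto(e^{2\pi i p_1/q}z_1,\dots,e^{2\pi i p_m/q}z_m)$ and set $\orb(q;\pv)=\gqp\backslash B(0,1)$. These embed in the flat orbifold $\gqp\backslash\R^n$, so $\conf\equiv 0$; and $\iso_g(\orb(q;\pv))=q^{-1/n}\iso(B(0,1))$ by Lemma~\ref{lem.q}. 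Inequality~\eqref{intro-sigmak} then reads $\sigma_2(\orb(q;\pv))\le Cq^{1/m}$. Since $\sigma_2$ equals the minimal nonzero degree of a $\gqp$-invariant homogeneous harmonic polynomial, equivalently the minimal $L^1$ norm of a nonzero vector in the lattice $\{\av\in\Z^m:\av\cdot\pv\equiv 0\pmod q\}$ (see \cite{LMR}), the choice $q_j=j^m$, $\pv_j=(1,j,\dots,j^{m-1})$ gives $\sigma_2=j=q_j^{1/m}$, saturating the bound and forcing the exponent $1-\tfrac{1}{n-1}$ to be minimal.
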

We prove the final statement by constructing a family of  {even dimensional orbifolds $\{(\orb_i,g_i)\}$ as quotients of  Euclidean balls such that   $\iso_{g_i}(\orb_i)\to 0$  and $\sigma_2(\orb_i,g_i)\vol_{g_i}(\partial\orb_i)^{\frac{1}{n-1}}=O(\iso_{g_i}(\orb_i)^{\frac{1}{n-1}-1})$ as $i\to\infty$. See Example \ref{h-ex} for details.}

For $n=2$, the power of $\iso_g(\orb)$ vanishes and we obtain upper bounds on Steklov eigenvalues (normalized by the length of $\partial \orb$) in terms of universal constants involving the Euler characteristic {$\chi(\orb)$} of $\orb$ {and the number of boundary components of each type}, extending results of \cite{CEG,H} to the setting of orbisurfaces.  
\begin{thm}\label{h-2dupbd0.intro}
Let $(\orb,g)$ be a compact Riemannian orbisurface with boundary consisting of $r$ type I and  $s$ type II  boundary components. Then for every $k\in\N$,
\begin{equation}\label{h-main.intro}
\sigma_k(\orb,g)\ell_g(\partial\orb)\le 
\begin{cases}
      Bk, & \text{if}\ \chi(\orb)+r+\frac{s}{2}\geq 0, \\
      -A(\chi(\orb)+r+\frac{s}{2})+Bk, & \text{if}\ \chi(\orb)+r+\frac{s}{2}< 0,
\end{cases}
\end{equation}
where $A$ and $B$ are positive universal constants. 
\end{thm}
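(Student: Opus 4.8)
The plan is to deduce the theorem from the general estimate of Theorem~\ref{h-confinv0} by controlling the conformal invariant $\conf(\orb,g)$ through the topology of $\orb$ alone. First I would specialize Theorem~\ref{h-confinv0} to $n=2$: the exponent $1-\tfrac{1}{n-1}$ then vanishes, so the isoperimetric ratio drops out of \eqref{intro-sigmak} and one is left with
\[
\sigma_k(\orb,g)\,\ell_g(\partial\orb)\ \le\ C_1\,\conf(\orb,g)+C_2\,k
\]
for universal $C_1,C_2$. It therefore suffices to establish a bound of the form $\conf(\orb,g)\le A'\bigl(1+\max\{0,-(\chi(\orb)+r+\tfrac{s}{2})\}\bigr)$ with $A'$ universal; the additive constant is then absorbed into $Bk$ using $k\ge 1$, and the two cases in \eqref{h-main.intro} correspond exactly to the sign of $\chi(\orb)+r+\tfrac{s}{2}$.

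To bound $\conf(\orb,g)$ I would first cap off the boundary, gluing a flat disk along each type I boundary component and a copy of the model half disk $\ohd$ along each type II boundary component, producing a closed Riemannian orbisurface $(\hat\orb,\hat g)$ whose metric agrees with $g$ off a collar of $\partial\orb$. Since the orbifold Euler characteristic is additive under gluing along boundary $1$-orbifolds and $\chi(D)=1$, $\chi(\ohd)=\tfrac12$, this gives $\chi(\hat\orb)=\chi(\orb)+r+\tfrac{s}{2}$. Following the scheme of \cite{CEG,H}, the Steklov conformal invariant $\conf(\orb,g)$ of Definition~\ref{def:confinv} is dominated, up to a universal factor, by the Laplace-type conformal invariant of $\hat\orb$: a near-optimal conformal metric on $\hat\orb$ restricts to a competitor on $\orb$, and the Dirichlet-to-Neumann quadratic form is controlled by the Dirichlet energy. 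This reduces the task to bounding the conformal invariant of a \emph{closed} orbisurface by its orbifold Euler characteristic.

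The closed case is where the real work lies, and the approach I would take is to adapt Korevaar's genus estimate, via the Grigor'yan--Netrusov--Yau covering technique, to orbisurfaces. One covers $\hat\orb$ by conformal coordinate charts --- ordinary disks around smooth points, cone charts $\C/\Z_m$ around interior cone points, and mirror strips and corner-reflector charts along the singular locus --- with bounded overlap multiplicity and with at most $N$ charts, where $N\le C\bigl(1+\gamma(|\hat\orb|)+b(|\hat\orb|)+\#\Sigma\bigr)$; here $\gamma,b$ are the genus and number of mirror circles of the underlying surface $|\hat\orb|$ and $\#\Sigma$ the number of cone points and corner reflectors. The covering scheme bounds the conformal invariant of $\hat\orb$ linearly in $N$. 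Finally the orbifold Gauss--Bonnet relation
\[
-\chi(\hat\orb)= -\chi(|\hat\orb|)+\sum_{\mathrm{cone}}\Bigl(1-\tfrac{1}{m_i}\Bigr)+\tfrac12\sum_{\mathrm{corner}}\Bigl(1-\tfrac{1}{n_j}\Bigr)
\]
shows that each singular point adds at least $\tfrac12$ to $-\chi(\hat\orb)+2$, while $\gamma(|\hat\orb|)+b(|\hat\orb|)\le -\chi(|\hat\orb|)+2\le -\chi(\hat\orb)+2$; hence $N\le C'\bigl(1+\max\{0,-\chi(\hat\orb)\}\bigr)$, and the desired bound on $\conf(\orb,g)$ follows.

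The specialization of Theorem~\ref{h-confinv0} and the Euler-characteristic bookkeeping are routine; the main obstacle is the orbifold version of the covering argument. One must run the covering and capacity estimates through charts of the form $\C/\Z_m$ and corner-reflector charts with constants \emph{independent of the orders} $m_i,n_j$, and deal with charts that straddle the singular locus. This should work because all of these local models are conformally a disk or half disk, so the orders cannot enter the conformal invariant; but verifying the bounded-multiplicity and capacity estimates near high-order singular points, and assembling them into a clean statement on the orbisurface, is the step I expect to require the most care.
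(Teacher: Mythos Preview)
Your opening moves match the paper exactly: specialize Theorem~\ref{h-confinv0} to $n=2$ so that the isoperimetric ratio disappears, cap off the boundary to obtain a closed orbisurface $\hat\orb$ with $\chi(\hat\orb)=\chi(\orb)+r+\tfrac{s}{2}$, and reduce everything to bounding $\conf(\orb,g)$ in terms of $\chi(\hat\orb)$.

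Where you diverge is in how you bound $\conf$. You propose a Korevaar/GNY covering argument on $\hat\orb$, but this does not fit the definition of $\conf$ you are trying to control. By Definition~\ref{def:confinv}, $\conf(\orb,g)$ is an infimum over admissible extensions $(\OP,h)$ of the quantity $-\vol_h(\orb)^{2/n}$ times a lower Ricci bound for $h$; bounding it means \emph{producing a metric in the conformal class with a good curvature lower bound}. A GNY covering produces disjoint capacitors and test functions, which is a route to eigenvalue bounds that bypasses $\conf$ entirely; it does not hand you a metric with controlled Ricci curvature. Your sentence ``the Steklov conformal invariant $\conf(\orb,g)$ is dominated \dots\ by the Laplace-type conformal invariant of $\hat\orb$'' conflates two genuinely different invariants, and the passage from the covering bound back to $\conf$ is not justified.

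The paper's argument here is far shorter and avoids all of the difficulties you flag. Once $(\hat\orb,\hat g)$ is in hand, apply orbifold uniformization (Proposition~\ref{uniform}): $\hat g$ is conformal to a metric $h_0$ of constant curvature of the same sign as $\chi(\hat\orb)$. Then $(\hat\orb,h_0)$ is itself an admissible extension of $(\orb,g)$. If $\chi(\hat\orb)\ge 0$ the curvature is nonnegative and $\conf(\orb,g)=0$. If $\chi(\hat\orb)<0$ take $h_0$ of curvature $-1$; then directly from the definition,
\[
\conf(\orb,g)\ \le\ \vol_{h_0}(\orb)\ \le\ \vol_{h_0}(\hat\orb)\ =\ -2\pi\,\chi(\hat\orb)
\]
by the orbifold Gauss--Bonnet theorem. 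No covering, no chart-by-chart capacity estimates, and no worry about uniformity in the cone orders; uniformization absorbs all of that. Replacing your third and fourth paragraphs with this two-line computation gives a complete proof.
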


There are several settings in which the upper bound in Theorem~\ref{h-confinv0} can be simplified.   {An important example is when  
$(\orb, g)$ conformally embeds as a domain in}  the quotient of Euclidean space or the round half-sphere by a finite group $\Gamma$ of isometries with $|\Gamma|=q$. Then $\iso_g(\orb)\geq C{q}^{-\frac{1}{n}}$ where $C$ is a {positive} constant depending only on the dimension.  Thus the bound in Theorem~\ref{h-confinv0} reduces to
  \begin{equation}\label{intro-growth}\sigma_k(\orb,g)\vol(\partial\orb)^{\frac{1}{n-1}}\le c(n) {{q}^{\frac{1}{n}\left(1-\frac{1}{n-1}\right)}} {k^{\frac{2}{n}}}, \end{equation}
  where $c(n)$ is a constant depending only on the dimension.   This result was known previously \cite[Thm. 1.2]{CEG} in the special case that $\Gamma$ is trivial, i.e., $q=1$.  For further results of this flavor, see \S \ref{sec:bounds}.

Finally, we note that one can use similar methods to obtain upper bounds for the Laplace eigenvalues of a compact {orbifold} analogous to Laplace {eigenvalue} bounds already known in the  manifold setting \cite{CEG,H}.  \\

The paper is organized as follows. In \S \ref{sec:ofld}, we recall the necessary background information on Riemannian orbifolds. Then in \S \ref{sec:psido}, we review the notion of pseudifferential operators on orbifolds and confirm that the Steklov problem extends to orbifolds.  In \S \ref{sec:invtomog}, we prove Theorem~\ref{cone-intro}.  The main results concerning the Steklov spectrum, alluded to in \S\S \ref{asymp}, \ref{isosp}, and \ref{bound} above, are addressed in \S \ref{sec:asympt}, \ref{sec:examples} and \ref{sec:bounds}, respectively.

\subsection*{Acknowledgements}  {We are delighted to thank David Sher and Emilio Lauret for invaluable conversations.  Conversations with David Sher were influential in our study of the Steklov spectrum of orbisurfaces.   Emilio Lauret helped us to discover Example~\ref{h-ex}, which established the sharpness of the upper bounds as described in Theorem~\ref{h-confinv0}.

In our original version of this article, Theorem~\ref{cone-intro} only stated that the cone and disk have the same Steklov spectrum.  We are very grateful to the referee for pointing out that our computations actually give the stronger result that the Dirichlet-to-Neumann operators coincide.

We thank the Banff International Research Station for hosting the 2015 Women in Geometry Workshop (15w5135) where this work began.  In addition we gratefully acknowledge the Max Planck Institute for Mathematics in Bonn for its support and hospitality, enabling the six of us to reunite and advance our collaboration.    

The first  author is supported by D.G.I.~(Spain) and FEDER Project MTM2016-77093-P, by Junta de Extremadura and FEDER funds.  The second author is partially supported by a grant from the Simons Foundation (210445 to Emily B. Dryden). The third and fifth authors thank the National Science Foundation (DMS-1632786) for providing travel support to the 6th Workshop on Differential Geometry in Argentina, and the third author also thanks the University of C\'ordoba for hosting an extended stay; there she conferred with Emilio Lauret as described above. The fourth author was supported by the Max Planck Institute for Mathematics in Bonn and is grateful for its support and excellent working conditions.


\section{Riemannian orbifold background}\label{sec:ofld}

\subsection{Definitions and basic properties}

We follow the presentation in \cite{gordon12}, which the reader may consult for further details. We begin by defining a Riemannian orbifold and highlighting several important features of orbifolds.  We will simultaneously define the notions of orbifold and orbifold with boundary.

\begin{definition} \label{defn:ofld}\normalfont
Let $\orb$ be a second countable Hausdorff space.  
 \begin{enumerate}
\item An $n$-dimensional \emph{orbifold coordinate chart}\footnote{An alternate term often used for orbifold coordinate chart is \emph{uniformizing system}.} (or just \emph{orbifold chart}) over an open subset $U\subset\orb$ is a triple $\cc$ for which: $\wtu$ is a connected open subset of $\R^n$, $\Gamma_U$ is a finite group acting effectively on $\wtu$ by diffeomorphisms, and the mapping $\varphi_U$ from $\wtu$ onto $U$ induces a homeomorphism from the orbit space $\Gamma_U\backslash \wtu$ onto $U$.  When defining orbifolds with boundary, one replaces $\R^n$ by $\R^n_+=\{(x_1, x_2, \dots, x_n): x_n\ge 0\}$.
\item An \emph{orbifold atlas} is a collection of compatible orbifold charts $\cc$ such that the images $\varphi_U({\wtu})$ cover $\orb$.  (See \cite[Defn. 1.4(1)]{gordon12} for the definition of compatibility.)  An \emph{orbifold} is a second countable Hausdorff space together with an orbifold atlas.  In the case of orbifolds with boundary, the \emph{boundary} of $\orb$ consists of all points $p\in\orb$ such that relative to some (and hence every) orbifold chart  $\cc$ with $p \in U$, the inverse image $\varphi_U^{-1}(p)$ lies in $\{x: x_n=0\}$.  Two-dimensional orbifolds will be called \emph{orbisurfaces}. 
\item An orbifold is said to be \emph{good} if it is globally the quotient $\Gamma\bs M$ of a manifold $M$ by a discrete group $\Gamma$ acting properly discontinuously.  Otherwise it is said to be \emph{bad}.
\item For $p\in \orb$, let $\cc$ be an orbifold chart on a neighborhood $U$ of $p$.  The \emph{isotropy type} of $p$, denoted $\operatorname{Iso}(p),$ is the isomorphism class of the isotropy group of a lift $\tilde p$ of $p$ in $\wtu$ under the action of $\Gamma_U$.  The isotropy type of $p$ is independent of the choice of lift $\tilde p$ as well as the choice of orbifold chart.  The isotropy type is canonically identified with a conjugacy class of subgroups of $O(n)$.    (For details see \cite[\S 1.2]{gordon12}.)
\item Points in $\orb$ with nontrivial isotropy are called \emph{singular points}.  Points that are not singular are called \emph{regular points}. 
\item An orbifold chart $\cc$ is said to be \emph{orientable} if the group $\Gamma_U$ consists of orientation-preserving transformations of $\wtu$.  In that case, an orientation of $\cc$ is given by a choice of orientation on $\wtu$. An \emph{orientable} orbifold is one which admits an atlas of compatibly oriented charts. 
\end{enumerate}
\end{definition}

\begin{remark} \label{covering}\normalfont
\begin{enumerate}
\item An orbifold is a stratified space with strata consisting of connected sets of points of like isotropy type.  The set of singular points in $\orb$, or \emph{singular set}, is a set of measure zero.  Correspondingly, the stratum consisting of all regular points has full measure.  Singular strata of codimension one are called \emph{reflectors} or \emph{mirrors}, since locally they are quotients of open sets in $\R^n$  (or $\R^n_+$) by the group generated by a reflection.  In particular, a reflector always has isotropy type $\Z_2$. 
\item The boundary of an orbifold is itself an orbifold without boundary.
It is important to distinguish between the boundary of the underlying second countable Hausdorff space {and the orbifold boundary as defined in Definition \ref{defn:ofld}(2).}   The latter is contained in the former, but the containment may be proper.  More precisely, the boundary of the underlying topological space consists of the orbifold boundary together with all reflectors; the latter lie in the interior of the orbifold.  For example, the quotient $\orb=\Z_2\bs\R^2$, where $\Z_2$ acts by reflection across the $x$-axis, is a good orbifold without boundary although the underlying topological space is homeomorphic to $\R^2_+$.
\item Orbifolds that contain reflectors are never orientable.   By doubling $\orb$ across all reflectors, one obtains a two-fold covering orbifold of $\orb$ all of whose singular strata have codimension at least two.   We refer to \cite[Defn. 1.28]{gordon12} for the definition of orbifold covering map. 

\end{enumerate}
\end{remark}

\begin{defn}\label{def.riem}\normalfont A Riemannian structure $g$ on an orbifold is defined by giving the local cover $\wtu$ of each orbifold chart $\cc$ a $\Gamma_U$-invariant Riemannian metric in such a way that the maps involved in the compatibility condition are isometries.  An orbifold with a Riemannian structure will be called a \emph{Riemannian orbifold}. 

Given a Riemannian metric on $\orb$, the (sectional, Ricci, or scalar, respectively) \emph{curvature} at a point $p\in\orb$ is defined to be the (sectional, Ricci, or scalar, respectively) curvature at any lift $\tilde{p}$ of $p$ in any orbifold chart $\cc$ about $p$.  The curvature is independent of the choice of chart and of lift.
\end{defn}

We end this subsection by recalling the definition of a smooth map on an orbifold and the notion of a suborbifold, again following \cite{gordon12}.

\begin{definition}\label{def.smoothmap} \normalfont Let $\orb$ and $\OP$ be orbifolds.  Suppose a function $f:\orb \rightarrow \OP$ is continuous with respect to the underlying space topologies of $\orb$ and $\OP$.  We say $f$ is a \emph{smooth orbifold map} if for every $p \in \orb$, there exist neighborhoods $U$ about $p$ and $V$ about $f(p)$ with $f(U)\subset V$ and charts $\cc$ over $U$ and $(\widetilde V, \Gamma_V, \vp_V)$ over $V$ for which the following two conditions hold: 
\begin{enumerate}
\item[i.] $f|_U$ lifts to a smooth map $\widetilde f: \wtu \rightarrow \widetilde V$ satisfying $\vp_V \circ \widetilde f = f \circ \vp_U$, and
\item[ii.]  there exists a homomorphism $\psi : \Gamma_U \rightarrow \Gamma_V$ such that for all $\gamma \in \Gamma_U$, we have $\widetilde f \circ \gamma = \psi(\gamma) \circ \widetilde f$.
\end{enumerate}
\end{definition}

\begin{definition}\normalfont  Let $\orb$ and $\OP$ be orbifolds, possibly with boundary, and let $i: \orb \rightarrow \OP$ be a smooth orbifold map such that $i:\orb \rightarrow i(\orb)$ is a homeomorphism with respect to the subspace topology on the image.  We will usually identify $\orb$ with $i(\orb)$.  We will say that $\orb$ is a \emph{suborbifold} of $\OP$ if the local lifts $\tilde{i}$ as in Definition~\ref{def.smoothmap} are embeddings.
\end{definition}

\begin{remark}\normalfont  A class of suborbifolds that will be important in what follows are those obtained as subdomains of larger ambient orbifolds.
\end{remark}

\subsection{Examples of orbifolds}\label{orbifold examples}

\begin{ex}\label{exa:dim1ofds} (The one-dimensional compact orbifolds) In a one-dimensional orbifold, all singular strata must have codimension one.  Thus by Remark~\ref{covering}, all singularities are reflector points with $\Z_2$ isotropy. Hence the one-dimensional compact orbifolds consist of the circle (all isotropy is trivial), a segment with both endpoints reflector points, a segment with one endpoint a reflector point, and a segment with no reflector points.  The first two of these are closed orbifolds and the latter two are orbifolds with boundary.  All these orbifolds are good: a segment with two reflector points is the quotient of a circle by the group generated by a reflection symmetry, and a segment with one reflector point is the quotient of a line segment of twice the length by a reflection.
\end{ex}

\begin{remark}\label{typeI_II}\normalfont  The boundary of a compact two-dimensional orbifold consists of finitely many closed one-dimensional orbifolds.   As just observed, a closed one-dimensional orbifold is of one of two types:  a circle or the quotient of a circle by a reflection.  We will refer to these two types of boundary components as \emph{type} I and \emph{type} II, respectively.  
\end{remark}

\begin{ex}\label{exa:flat cone}
A cone orbifold with cone angle $\frac{2\pi}{k}$ is obtained by taking the quotient of a disk $D\subset \R^2$ by a cyclic group of symmetries generated by a rotation through angle $\frac{2\pi}{k}$.  The point fixed by the rotations is an interior singular point called a \emph{cone point} of order $k$. The circular boundary of this orbifold, which consists entirely of regular points, is the image under the quotient of the boundary of $D$.  
\end{ex}

\begin{ex} Suppose $\orb$ is an orbisurface whose only singular points are cone points and whose underlying topological space is a sphere. If $\orb$ has a single cone point of order $k$ then it is called a $k$-\emph{teardrop}.  If $\orb$ has two cone points of orders $p$ and $q$ then it is called a $(p,q)$-\emph{football}.  If $\orb$ has three cone points of orders $p$, $q$, and $r$ then it is called a $(p,q,r)$-\emph{pillow}.  All teardrops are bad orbifolds, as are footballs for which $p \ne q$.  When $p=q$, a football is the quotient of a sphere under a cyclic group of rotations.   All pillows are good orbifolds.
\end{ex}

\subsection{Orbifold bundles}  An orbibundle consists of: 
\begin{itemize}
\item an orbifold $\mathcal E$ (the total space) and an orbifold $\orb$ (the base space),
\item a surjective map $\pi_{\mathcal E}:{\mathcal E}\to \orb$ (the bundle projection),
\item a manifold $F$ (the model fiber),
\item a subgroup $G$ of Diff$(F)$ (the structure group), and
\item  a collection of mutually compatible $F$-bundle orbifold charts for ${\mathcal E}$ over $\orb$ whose images cover $\orb$.   (The $F$-bundle orbifold charts play the role of the local trivializations in the familiar definition of bundles over manifolds.  We omit the definition here; see \cite[\S 2.3]{gordon12} for details.)
\end{itemize}

The only orbibundle that will arise explicitly in this article is the orthonormal frame bundle of a Riemannian orbifold.   We gather here all the properties that we will need.

\begin{defn}[Orthonormal frame bundle]\label{def.onfbundle}\normalfont First consider a good Riemannian orbifold $\orb=\Gamma\bs M$, where $M$ is an $n$-dimensional Riemannian manifold and $\Gamma$ is a discrete group of isometries of $M$ acting with finite isotropy groups.  Let $\F M$ be the orthonormal frame bundle of $M$.  The group $\Gamma$ acts on $\F M$ as the differentials of the isometries in $\Gamma$.   Since an isometry is uniquely determined by its value and differential at any point, the action of $\Gamma$ on $\F M$ is free.  Thus $\F\orb:=\Gamma\bs \F M$ is a manifold.  The bundle projection map $\pi_{\F M}: \F M\to M$ induces a surjective map $\pi_{\F\orb}: \F \orb\to \orb$. This is the orthonormal frame bundle of $\orb$.   (Note that this definition still makes sense if $\orb$, and hence $M$, have boundary.)

In the case of a bad orbifold (possibly with boundary), observe that the image $U$ of any orbifold chart $\cc$ is a good orbifold $\Gamma_U\bs {\widetilde U}$.   The orthonormal frame bundle $\F\orb\to \orb$ is defined in such a way that the restriction to the image $U$ of any orbifold chart is the orthonormal frame bundle $\F U=\Gamma_U\bs \F{\widetilde U}\to U$ as defined above.   We omit the details here.

\end{defn}

\begin{remark}\label{rem.onf}\normalfont (i) The orthonormal frame bundle $\F\orb$ is a principal orbibundle with model fiber $O(n)$ and structure group $O(n)$.  The total space $\F\orb$ is actually a smooth manifold.  The orbifold $\orb$ is the quotient of $\F\orb$ by the action of $O(n)$. 

(ii) One difference between an orbibundle and a bundle over a smooth manifold is that, although the generic fiber $\pi_{\mathcal E}^{-1}(x)$ is diffeomorphic to $F$, there may be singular fibers that are diffeomorphic to the quotient of $F$ by a finite group action.  In the case of the orthonormal frame bundle of a Riemannian orbifold $\orb$, the fiber over each regular point of $\orb$ is diffeomorphic to $O(n)$, but the fiber over a point $p\in \orb$ with isotropy group $\operatorname{Iso}(p)\subset O(n)$ is diffeomorphic to $\operatorname{Iso}(p)\bs O(n)$.

\end{remark}

\begin{prop}\label{m/g} Every orbifold $\orb$, good or bad, can be realized as the orbit space of an effective action of a compact Lie group $G$ on a manifold $M$ with all isotropy groups finite.  If $\orb$ has boundary, then so does $M$ and $\partial\orb = G\bs\partial M$.

\end{prop}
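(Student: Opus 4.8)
The plan is to exhibit the pair $(M,G)$ explicitly: take $M$ to be the orthonormal frame bundle $\F\orb$ and $G = O(n)$ acting on the right, and then verify the three required properties (manifold, effective action with orbit space $\orb$, finite isotropy) directly from the facts assembled in Definition~\ref{def.onfbundle} and Remark~\ref{rem.onf}. This is the natural candidate since, as recorded there, $\F\orb$ is already known to be a smooth manifold and $\orb$ is already known to be the quotient $O(n)\bs\F\orb$.

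Concretely, I would argue as follows. By Remark~\ref{rem.onf}(i), $M:=\F\orb$ is a smooth manifold, $G:=O(n)$ acts on it on the right (the action being defined chart-by-chart via the $F$-bundle orbifold charts and patched compatibly), and the orbit space of this action is canonically $\orb$ with orbit map $\pi_{\F\orb}$. The action is effective because over any regular point the fiber is a copy of $O(n)$ on which $O(n)$ acts by right translation, hence freely, so no nontrivial element of $O(n)$ can act trivially on all of $M$. For the isotropy groups, invoke Remark~\ref{rem.onf}(ii): the fiber over a point $p\in\orb$ with isotropy type $\operatorname{Iso}(p)\subset O(n)$ is, as an $O(n)$-space, $\operatorname{Iso}(p)\bs O(n)$ with its residual right $O(n)$-action, and the stabilizer of any point there is a conjugate of the finite group $\operatorname{Iso}(p)$; thus every isotropy group of the $G$-action on $M$ is finite. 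For the boundary statement, note that in a boundary orbifold chart $\orb$ is modeled on $\Gamma_U\bs\wtu$ with $\wtu\subset\R^n_+$, and the frame bundle of such a $\wtu$ is $\wtu\times O(n)$, a manifold with boundary $(\wtu\cap\{x_n=0\})\times O(n)$; patching these shows $M$ is a manifold with boundary and $\partial M = \pi_{\F\orb}^{-1}(\partial\orb)$. Since $\pi_{\F\orb}$ is $O(n)$-invariant and restricts to this preimage with quotient $\partial\orb$, we conclude $\partial\orb = G\bs\partial M$.

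I do not expect a genuine obstacle here: the substantive work — constructing the smooth manifold structure on $\F\orb$, showing the global $O(n)$-action is well defined, and identifying the singular fibers — has already been carried out (or packaged) in Definition~\ref{def.onfbundle} and Remark~\ref{rem.onf}, which I am permitted to assume. The remaining step is essentially bookkeeping: recording that $(M,G)=(\F\orb,\,O(n))$ satisfies all the stated conditions. The only point requiring mild care is the boundary case, where one must check that ``boundary'' is taken in the orbifold sense throughout and that the restriction of the frame bundle over $\partial\orb$ (with fiber $O(n)$, not $O(n-1)$) is exactly the topological boundary of $M$; the local model $\wtu\times O(n)$ above makes this transparent.
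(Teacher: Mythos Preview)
Your proposal is correct and takes essentially the same approach as the paper: the paper's ``proof'' is simply the one-line observation that one may take $M=\F\orb$ (with respect to some choice of Riemannian metric on $\orb$) and $G=O(n)$. You have in fact supplied considerably more detail than the paper does, carefully verifying effectiveness, finiteness of isotropy, and the boundary statement from the facts recorded in Definition~\ref{def.onfbundle} and Remark~\ref{rem.onf}.
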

For instance, we may take $M=\F\orb$ with respect to some choice of Riemannian metric on $\orb$, and take $G=O(n)$.

\begin{remark}\label{metric_onf}\normalfont Let $G\bs M$ be a realization of $\orb$ as in Proposition~\ref{m/g}.  Given a Riemannian metric $g$ on $\orb$, we may give $M$ a Riemannian metric for which the projection $\pi_M: M\to \orb=G\bs M$ is a Riemannian submersion.  We can further require that the Riemannian metric on $M$ restrict to the bi-invariant metric of volume one on each fiber.   More precisely, the metric on the regular fibers is the bi-invariant metric of volume one, and the metric on the singular fibers (see Remark~\ref{rem.onf}) is the metric induced by the bi-invariant metric of volume one on $G$.   \end{remark}

\subsection{Volume comparison on complete orbifolds}\label{geodesics}

A Riemannian orbifold will be called \emph{complete} if it is complete as a {length} space with respect to the metric induced by the orbifold's Riemannian structure.  The Hopf-Rinow Theorem for length spaces (see \cite[p. 9]{Gro99}) implies that if an orbifold is complete in this sense, then
any two points in the orbifold can be joined by a length-minimizing curve.  Note that this implies that the set of regular points of an orbifold forms a convex set.

\begin{remark}\normalfont There are some subtleties in the definition of geodesics on orbifolds. J. Borzellino \cite{Bz} showed that if a length-minimizing curve in an orbifold is not entirely contained
within the singular set, it can only intersect the singular set at its end points.  Although from the perspective of the geodesic flow of an orbifold it makes sense to continue a geodesic across the singular set, we then lose the property that the geodesic is locally length minimizing.  This can be seen in Example \ref{exa:flat cone} if you consider two points on opposite sides of the cone point.  The two points are more efficiently connected by a geodesic that goes around the cone point rather than the path between them that goes directly through it.  In what follows our interest is in length-minimizing curves and we will avoid use of the word `geodesic.'
\end{remark}

Borzellino \cite[Prop. 20]{Bz} showed that the Bishop-Gromov Comparison Theorem is valid for orbifolds.   As in Definition~\ref{def.riem}, 
we say that an $n$-dimensional Riemannian orbifold $(\orb,g)$ has $\ric_{(\orb,g)}\ge \kappa(n-1)$, $\kappa\in\R$, if for each orbifold chart $\cc$, $\wtu$  is a Riemannian manifold of Ricci curvature greater than or equal to $ \kappa(n-1)$. 

\begin{prop}[Relative Volume Comparison Theorem for Orbifolds \cite{Bz}]\label{b-g} Let $(\orb,g)$ be a complete {n-dimensional} Riemannian orbifold without boundary and  $Ric_{(\orb,g)} \geq \kappa(n-1)$. Then for every $p\in\orb$, the function $f_p(r)=\frac{\vol(B(p,r))}{v(n,{\kappa},r)}$ is nonincreasing on $(0,\infty)$, where $v(n,{\kappa},r)$ is the volume of a ball of radius $r$ in the $n$-dimensional simply connected space form of constant curvature $\kappa$. Moreover, $\lim_{r\to0} f(r)=\frac{1}{|\operatorname{Iso}(p)|}$, where $|\operatorname{Iso}(p)|$ is the order of the isotropy type at $p$. 
\end{prop}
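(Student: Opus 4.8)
The plan is to run the classical Bishop--Gromov argument on the set $\orb_{\rm reg}$ of regular points --- a dense, full-measure Riemannian $n$-manifold on which the bound $\ric\ge\kappa(n-1)$ holds pointwise --- while recording the finite isotropy through local uniformizing charts. Two facts make this possible, both available from the excerpt and from \cite{Bz}: the singular set, and (one verifies) the cut locus from a point, have measure zero, hence are invisible to integration; and, by Borzellino's lemma, a length-minimizing curve not contained in the singular set meets it only at its endpoints, so along such a curve the area Jacobian of the exponential map satisfies the \emph{smooth} Jacobi (equivalently Riccati) comparison on its open interior. Granting these, everything reduces to the manifold case up to a multiplicative factor $|\operatorname{Iso}(p)|$.

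\emph{The limit as $r\to0$.} Fix $p\in\orb$ with $m:=|\operatorname{Iso}(p)|$ and choose an orbifold chart $(\widetilde U,\Gamma_p,\varphi)$ with $\widetilde U=\widetilde B(\tilde p,\rho)$ a small ball and $\Gamma_p$ fixing $\tilde p$. Since $\Gamma_p$ fixes $\tilde p$, one has $d_\orb(p,\varphi(x))=d_{\widetilde U}(\tilde p,x)$ for $x$ near $\tilde p$; hence, for all small $r$, $B(p,r)=\varphi(\widetilde B(\tilde p,r))$ and $\varphi$ is $m$-to-one there off a null set, so $\vol(B(p,r))=\tfrac1m\vol(\widetilde B(\tilde p,r))$. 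As $\widetilde U$ is a Riemannian manifold with $\ric\ge\kappa(n-1)$, the standard small-ball expansion gives $\vol(\widetilde B(\tilde p,r))/v(n,\kappa,r)\to1$, whence $\lim_{r\to0}f_p(r)=\tfrac1m$.

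\emph{Monotonicity.} For a unit vector $v\in T_{\tilde p}\widetilde U$, let $\gamma_v$ be the length-minimizing curve issuing from $p$ whose near-$p$ lift to $\widetilde U$ is the geodesic ray of direction $v$, continued as long as it remains minimizing, say on $[0,c(v)]$ with $c(v)\in(0,\infty]$. By Borzellino's lemma $\gamma_v(t)$ is regular for $t\in(0,c(v))$, so the Jacobian $\mathcal A(t,v)$ of the map $(t,v)\mapsto\gamma_v(t)$ in polar coordinates satisfies the usual Jacobi comparison: $t\mapsto\mathcal A(t,v)/\mathcal A_\kappa(t)$ is nonincreasing on $(0,c(v))$, with $\mathcal A(t,v)\sim\mathcal A_\kappa(t)\sim t^{n-1}$ as $t\to0^+$, where $\mathcal A_\kappa$ is the model density of the $\kappa$-space form. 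Two curves $\gamma_v,\gamma_{v'}$ coincide precisely when $v'\in\Gamma_p\cdot v$, and by Hopf--Rinow for length spaces every point of $\orb$ lies on some $\gamma_v$; hence $(t,v)\mapsto\gamma_v(t)$ is $|\Gamma_p|$-to-one off a null set, and integration in geodesic polar coordinates on $\orb_{\rm reg}$ yields
\[
\vol(B(p,r))=\frac{1}{|\Gamma_p|}\int_{S^{n-1}}\int_0^{\min(r,c(v))}\mathcal A(t,v)\,dt\,dv,
\]
while $v(n,\kappa,r)=\int_{S^{n-1}}\int_0^{r}\mathcal A_\kappa(t)\,dt\,dv$. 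Extending $\mathcal A(\cdot,v)$ by zero past $c(v)$ keeps $\mathcal A(t,v)/\mathcal A_\kappa(t)$ nonincreasing on $(0,\infty)$, so the elementary fact that $\big(\int_0^r a\big)/\big(\int_0^r b\big)$ is nonincreasing whenever $a\ge0$, $b>0$ and $a/b$ is nonincreasing --- applied in each direction $v$ and then integrated over $S^{n-1}$ --- shows that $f_p(r)=\vol(B(p,r))/v(n,\kappa,r)$ is nonincreasing on $(0,\infty)$.

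\emph{Main obstacle.} The delicate part is the singular-set bookkeeping in the monotonicity step: making rigorous the ``space of directions'' at a singular $p$ (the link $\Gamma_p\backslash S^{n-1}$) together with its exponential map, verifying that the family $\{\gamma_v\}$ exhausts $\orb$ with exact $|\Gamma_p|$-fold multiplicity up to a null set, and confirming that the cut locus from $p$ is null so that the polar-coordinate volume identity is an equality. The analytic core --- the Riccati comparison for $\mathcal A(t,v)$ and the averaging argument --- is purely local on the Riemannian manifold $\orb_{\rm reg}$ and is verbatim the classical proof.
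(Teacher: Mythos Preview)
Your argument is correct, but the paper takes a genuinely different and simpler route for the monotonicity at a singular center. You work directly at $p$: you parametrize minimizing curves by the unit sphere in $T_{\tilde p}\widetilde U$, invoke Borzellino's lemma to keep them in the regular set on their open interior, run the Riccati/Jacobi comparison along each ray, and then integrate over the quotient sphere $\Gamma_p\backslash S^{n-1}$. This is a valid strategy, but, as you yourself flag, it requires nontrivial bookkeeping: a rigorous space-of-directions/exponential map at a singular point, the $|\Gamma_p|$-to-one exhaustion statement, and the null-cut-locus claim. The paper bypasses all of this. It first observes that the regular set is convex, so for a \emph{regular} base point $p$ every minimizing curve from $p$ stays in $\orb_{\rm reg}$ and the classical manifold proof applies verbatim. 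For a \emph{singular} $p$ it then takes a sequence of regular points $p_i\to p$ and uses the continuity $\vol(B(p_i,r))\to\vol(B(p,r))$; since each $f_{p_i}$ is nonincreasing, so is the pointwise limit $f_p$. Your approach has the virtue of being self-contained and of making the factor $|\operatorname{Iso}(p)|^{-1}$ visible throughout (not just in the $r\to0$ limit), while the paper's limiting argument is shorter and avoids having to set up polar coordinates at a singular point. Your computation of $\lim_{r\to0}f_p(r)$ via the local chart is the natural one and is not in conflict with the paper's sketch, which does not address that limit separately.
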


Since the regular set of $\orb$ is convex, for a regular point $p$ one can follow the argument in the  manifold setting \cite[p. 279]{pet16}.  For a singular point $p$, one can consider a sequence of regular points $\{p_i\}$ with $p_i\to p$. The proposition follows from the fact that $\vol(B(p_i,r))\to{\vol(B(p,r))}$ (see \cite{Bz} for more details).

\subsection{Some tools and results for orbisurfaces}

Much work has been done to extend results from the manifold setting to the orbifold setting.  Thurston \cite{Th} defined the notion of Euler characteristic for orbifolds.

\begin{defn}[Euler characteristic]\label{def:Euler_characteristic}\normalfont Let $\{c_i\}$ be a cell division of an orbifold $\orb$ for which the isotropy group associated to the interior points of each cell is constant. The Euler characteristic of $\orb$ is defined by $$\chi(\orb):=\sum_i (-1)^{dim \, (c_i)} \frac{1}{|\operatorname{Iso}(c_i)|}$$ where $|\operatorname{Iso}(c_i)|$ is the order of the isotropy type associated to the cell $c_i$.
\end{defn}

\begin{remark}\label{rem.euler}\normalfont (i) If $\widetilde \orb$ is a $k$-sheeted orbifold cover of $\orb$, then $\chi(\widetilde \orb)=k\chi(\orb)$.

(ii) Observe that the Euler characteristic need not be an integer.   For example, the cone in Example~\ref{exa:flat cone} has Euler characteristic $\frac{1}{k}$.

\end{remark}

The following extension of the Gauss-Bonnet Theorem is treated in detail by I. Satake in \cite{Satake57}.  Although Satake proves the theorem for general dimension, we will only need the two-dimensional case.

\begin{thm}[Gauss-Bonnet Theorem for Orbisurfaces]\label{thm:gauss_bonnet} Let $\orb$ be a compact Riemannian orbisurface and let $K$ denote its Gaussian curvature. Then 
 $$\int_{\orb} K d\vol_{\orb} = 2\pi \chi(\orb).$$ 
\end{thm}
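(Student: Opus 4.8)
The plan is to deduce the orbifold statement from the classical Gauss--Bonnet theorem for smooth surfaces with boundary, by excising small metric disks around the singular points and passing to a limit; reflector curves are handled first, by a doubling trick. (We take $\orb$ to be closed, i.e.\ without orbifold boundary, consistent with the absence of a geodesic‑curvature term in the statement; the version with boundary follows the same lines with an extra $\int_{\partial\orb}k_g\,ds$ term.) First I would reduce to the case of isolated cone points: by Remark~\ref{covering}(3), doubling $\orb$ across all of its reflectors produces a two-fold orbifold cover $\pi\colon\widehat\orb\to\orb$ whose singular set has no codimension-one strata, hence in dimension two consists of finitely many cone points only. Away from a set of measure zero $\pi$ is a two-to-one local isometry, so $\int_{\widehat\orb}K\,d\vol=2\int_\orb K\,d\vol$, and $\chi(\widehat\orb)=2\chi(\orb)$ by Remark~\ref{rem.euler}(i); thus it suffices to prove the formula when the singular set of $\orb$ is a finite collection of cone points $p_1,\dots,p_N$ of orders $k_1,\dots,k_N$, with $\operatorname{Iso}(p_i)\cong\Z_{k_i}$ acting on the local chart by rotation about the center.

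Next I would excise the cone points. For $\epsilon$ less than the injectivity radius at each $p_i$ the closed metric balls $B(p_i,\epsilon)$ are pairwise disjoint; in a chart about $p_i$ the ball lifts to a genuine smooth geodesic disk $\widetilde B_i$ on which $\Z_{k_i}$ acts by rotation fixing only the center, so $\partial B(p_i,\epsilon)$ is a smooth circle that is $k_i$-to-one covered by $\partial\widetilde B_i$, and $\orb_\epsilon:=\orb\setminus\bigcup_i\operatorname{int}B(p_i,\epsilon)$ is a smooth compact Riemannian surface with smooth boundary. Applying the classical Gauss--Bonnet theorem with boundary to $\orb_\epsilon$ (valid for nonorientable surfaces as well, e.g.\ after passing to the orientation double) gives
\[
\int_{\orb_\epsilon}K\,d\vol+\int_{\partial\orb_\epsilon}k_g\,ds=2\pi\,\chi(\orb_\epsilon),
\]
where $\partial\orb_\epsilon$ carries its boundary orientation.

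It then remains to let $\epsilon\to0$ and do the bookkeeping. Since the metric is smooth in each chart, $K$ is bounded near each $p_i$, so $\int_{\orb_\epsilon}K\to\int_\orb K$. By locality of geodesic curvature and the $k_i$-fold covering $\partial\widetilde B_i\to\partial B(p_i,\epsilon)$ one has $\int_{\partial B(p_i,\epsilon)}k_g\,ds=\tfrac{1}{k_i}\int_{\partial\widetilde B_i}k_g\,ds$; applying classical Gauss--Bonnet to the smooth disk $\widetilde B_i$ (which has $\chi=1$) and using $\int_{\widetilde B_i}K\to0$ shows $\int_{\partial\widetilde B_i}k_g\,ds\to2\pi$, hence $\int_{\partial B(p_i,\epsilon)}k_g\,ds\to\tfrac{2\pi}{k_i}$; since the boundary orientation of $\orb_\epsilon$ is the reverse of that of $\partial B(p_i,\epsilon)$ as the boundary of the excised disk, $\int_{\partial\orb_\epsilon}k_g\,ds\to-\sum_i\tfrac{2\pi}{k_i}$. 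Finally, taking a cell division of $\orb$ with each $p_i$ a vertex, \defref{def:Euler_characteristic} gives $\chi(\orb)=\chi_{\mathrm{top}}(|\orb|)-\sum_i\bigl(1-\tfrac{1}{k_i}\bigr)$, where $\chi_{\mathrm{top}}(|\orb|)$ is the ordinary topological Euler characteristic of the underlying space, while $\chi(\orb_\epsilon)=\chi_{\mathrm{top}}(|\orb|)-N$ because $|\orb|$ is $\orb_\epsilon$ with $N$ disks glued back in. Substituting both into the limiting identity yields $\int_\orb K\,d\vol_\orb=2\pi\chi(\orb)$.

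The analytic input here is entirely classical; the only places needing care — and hence the main obstacle — are the bookkeeping: the orientation sign on the excised circles, the normalization factor $1/k_i$ coming from the $k_i$-fold cover of each small boundary circle, and the combinatorial identity tying together $\chi(\orb)$, $\chi_{\mathrm{top}}(|\orb|)$, and the cone orders. I would also note that for \emph{good} orbisurfaces one can shortcut the whole argument: if $\orb=\Gamma\bs\Sigma$ with $\Sigma$ a closed surface carrying the pulled-back metric and $\Gamma$ finite acting by isometries, then $\Sigma\to\orb$ is a $|\Gamma|$-sheeted orbifold cover, so $|\Gamma|\int_\orb K=\int_\Sigma K=2\pi\chi(\Sigma)=2\pi|\Gamma|\chi(\orb)$; but since bad orbisurfaces (teardrops, unequal-order footballs) exist, the excision argument is the one that covers all cases.
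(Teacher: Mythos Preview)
The paper does not actually prove this theorem; it is stated as background and attributed to Satake \cite{Satake57}, who establishes it in all dimensions. So there is no ``paper's own proof'' to compare against beyond that citation.

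Your excision argument is a correct and self-contained two-dimensional proof. The reduction by doubling across reflectors is exactly Remark~\ref{covering}(3), and it correctly turns any corner reflectors (dihedral isotropy $D_k$) into cone points (cyclic isotropy $\Z_k$) in the double, so ``only cone points remain'' is justified. The limiting computation is right: the key identity $\int_{\partial B(p_i,\epsilon)}k_g\,ds=\tfrac{1}{k_i}\int_{\partial\widetilde B_i}k_g\,ds$ follows from the local isometry of the $k_i$-fold cover, and the sign and Euler-characteristic bookkeeping check out, yielding $\int_\orb K=2\pi\bigl(\chi_{\mathrm{top}}(|\orb|)-\sum_i(1-\tfrac{1}{k_i})\bigr)=2\pi\chi(\orb)$. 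Your remark that classical Gauss--Bonnet with boundary holds for nonorientable $\orb_\epsilon$ (via the orientation double cover) is needed, since the reflector-doubling step does not by itself guarantee orientability of $\widehat\orb$.

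Compared with Satake's approach, which treats $V$-manifolds in arbitrary dimension via a generalized Chern--Gauss--Bonnet argument, yours is more elementary and tailored to surfaces: it uses nothing beyond the classical smooth theorem plus local quotient structure. What Satake's route buys is uniformity across dimensions; what yours buys is transparency and minimal prerequisites in the only case the paper needs.
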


As shown in \cite[Thm. 13.3.6]{Th} the orbifold Euler characteristic gives us a convenient way to classify compact closed orbisurfaces.  The different classes correspond to different metric behavior; to formalize this idea, we need a definition.

\begin{definition}\label{def:conf_class}\normalfont  The conformal class $[g]$ of a metric $g$ on an orbifold $\orb$ is given by
\[[g]:=\{e^fg: f \in C^0(\orb) \ \text{and} \ f|_{\operatorname{Int}({\orb})} \in C^\infty (\operatorname{Int}(\orb))\}\]
\end{definition}

\begin{prop}\label{uniform}  \ 
\begin{enumerate}
\item The only bad closed orbisurfaces are the teardrop, $(p,q)$-footballs with $p \ne q$, and quotients of these by reflections.  All bad closed orbifolds have positive Euler characteristic.\item Let $\orb$ be a closed orbisurface and let $g$ be any Riemannian metric on $\orb$.  
\begin{enumerate}
\item If $\chi(\orb)<0$, then $g$ is conformally equivalent to a Riemannian metric of constant negative curvature.
\item If $\chi(\orb)=0$, then $g$ is conformally flat.
 \item If $\chi(\orb)>0$, then $g$ is conformally equivalent to a Riemannian metric $h$ of positive curvature.  If, moreover, $\orb$ is a good orbisurface, then $h$ can be taken to have constant positive curvature.
\end{enumerate}
In all three cases, normalized Ricci flow carries $g$ to a conformally equivalent metric with the indicated property.
\end{enumerate}
\end{prop}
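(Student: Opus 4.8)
The plan is to leverage the classification of closed orbisurfaces via Euler characteristic together with the orbifold uniformization theorem and, for part (1), the Gauss–Bonnet Theorem~\ref{thm:gauss_bonnet} applied to covers.

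For part (1), the classification of bad closed orbisurfaces is Thurston's theorem \cite[Thm.~13.3.6]{Th}: a closed orbisurface fails to be good precisely when it is a teardrop, a $(p,q)$-football with $p\neq q$, or a quotient of one of these by a reflection. To see that every bad closed orbisurface has positive Euler characteristic, I would argue on a case-by-case basis using Remark~\ref{rem.euler}(i). For a $k$-teardrop, a direct cell-count (one vertex of isotropy order $k$, one edge, one face, or a slightly finer division) gives $\chi = \frac{1}{k} > 0$; for a $(p,q)$-football one computes $\chi = \frac{1}{p} + \frac{1}{q} > 0$; and quotients by a reflection halve the Euler characteristic by Remark~\ref{rem.euler}(i) (the reflection quotient is covered two-to-one by the football/teardrop), so these remain positive. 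Alternatively, and more uniformly: a bad orbisurface admits no metric of nonpositive curvature (otherwise one could run the argument of part (2) below to contradict badness via the uniformization in the $\chi\le 0$ cases, which produce good-orbifold structures — in fact the $\chi\le 0$ uniformizing geometries all descend from manifold covers), so by Gauss–Bonnet $\int_\orb K\,d\vol = 2\pi\chi(\orb)$ forces $\chi(\orb)>0$.

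For part (2), the statement is the orbifold uniformization theorem, and I would invoke the normalized Ricci flow on orbisurfaces. The key input is that on a compact orbisurface, the normalized Ricci flow $\partial_t g = (2\pi\chi(\orb)/\vol_g(\orb) - K_g)\,g$ starting from any metric $g$ stays within the conformal class $[g]$ of Definition~\ref{def:conf_class} (the flow only scales $g$ pointwise) and converges to a metric of constant curvature when $\chi\neq 0$, or to a flat metric when $\chi=0$. This is the orbifold analogue of Hamilton's and Chow's results on surfaces; the orbifold case was handled by Chow–Wu and Wu, with the delicate case being $\chi>0$ on bad orbifolds (where convergence is to a soliton rather than a constant-curvature metric). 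This is exactly why in case (c) we only claim a metric $h$ of positive curvature in general, and constant positive curvature when $\orb$ is good: a good orbisurface is finitely covered by $S^2$, so lifting, applying Hamilton's theorem on $S^2$, and pushing back down (the flow is equivariant under the deck group) yields a constant-curvature metric downstairs. In the $\chi<0$ case the limit metric has constant negative curvature, and in the $\chi=0$ case constant zero curvature; in all cases the limit lies in $[g]$, which is the assertion. The final sentence of the proposition is then just the observation that the converging flow realizes the conformal equivalence explicitly.

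The main obstacle is the $\chi(\orb) > 0$ case: unlike the negatively curved and flat cases, one cannot in general reach a constant-curvature representative within the conformal class (the bad orbifolds obstruct this — a teardrop admits no constant-curvature metric at all), so one must settle for positive curvature and invoke the Ricci-flow convergence to a gradient soliton. I would cite the orbifold Ricci flow literature (Chow–Wu, L.-F.~Wu) for the existence, regularity within the conformal class, and convergence of the flow in this case, rather than reproving it. The good-orbifold refinement then follows by the equivariant lifting argument sketched above, using that a good closed orbisurface of positive Euler characteristic is a finite quotient of the round $S^2$.
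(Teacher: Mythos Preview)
Your approach is essentially the same as the paper's: Thurston for the classification in (1), Ricci flow for the uniformization in (2), and Chow--Wu/Wu for the bad case. One organizational difference worth noting: for all \emph{good} orbisurfaces (in particular all of cases (2a), (2b), and the good subcase of (2c)), the paper first passes to a finite smooth surface cover via \cite[Thm.~2.5]{PeterScott} and runs the standard \emph{manifold} normalized Ricci flow there, using equivariance to descend; this keeps those cases entirely within the classical Hamilton--Chow theory and reserves the orbifold Ricci flow literature (Chow--Wu, Wu) solely for the bad case, whereas you invoke orbifold Ricci flow more broadly. One slip: the $k$-teardrop has $\chi = 1 + \tfrac{1}{k}$, not $\tfrac{1}{k}$---your ``one vertex, one edge, one face'' decomposes a disk, not a sphere---though of course this does not affect the positivity conclusion, and your alternative Gauss--Bonnet argument is fine.
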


{
We summarize the proof of this well known result: 
\begin{proof}  Statement (1) follows from \cite[Thm.~13.3.6]{Th}.  
For statements (2a) and (2b), $\orb$ is necessarily a good orbisurface (see \cite{PeterScott}).  As shown in \cite[Thm 2.5]{PeterScott}, every good closed orbisurface is finitely covered by a surface.   Let $M$ be a finite surface cover of $\orb$, say $\orb=\Gamma\bs M$, and pull $g$ back to a $\Gamma$-invariant Riemannian metric  $\tilde g$ on $M$.   The normalized Ricci flow on $M$ carries $\tilde g$ to a constant curvature metric $\tilde h$.   Since the Ricci flow preserves conformal classes in dimension 2 and also preserves isometries, $\tilde h$ is  $\Gamma$-invariant and descends to a constant curvature metric on $\orb$ conformally equivalent to $g$.  By the Gauss--Bonnet theorem the curvature has the same sign as $\chi(\orb)$ and statements (2a) and (2b) follow.    Statement (2c) also follows in case $\orb$ is good.

If $\orb$ is a bad closed orbisurface, then B. Chow and L.-F. Wu showed in \cite[Thm. 1.2]{ChowWu} that the normalized Ricci flow on $\orb$ carries every metric to a soliton metric; Wu \cite{Wu} showed that this soliton metric is unique and has positive curvature.  This yields (2c) and also proves the final statement of the proposition in the case of bad orbisurfaces.

\end{proof}
}


\section{Establishing the Steklov problem on orbifolds}\label{sec:psido}

\subsection{Pseudodifferential operators on orbifolds}\label{pseudodiff}

We review the definition and properties of pseudodifferential operators on Riemannian orbifolds.  A detailed treatment can be found in \cite{Bu08}, \cite{GN1} and \cite{GN2}; see also \cite{SU}.  The {book by M. A. Shubin} \cite{Sh} provides a comprehensive treatment of pseudodifferential operators in general. Here we will primarily follow B.~Bucicovschi \cite{Bu08} and A.~Uribe and the sixth author \cite{SU}.  
 
A pseudodifferential operator on a Riemannian orbifold may be defined either through the use of orbifold charts, or globally by realizing the orbifold as a quotient of a Riemannian manifold by the action of a compact Lie group.   The following definition gives the first approach.

\begin{defn}\label{def_pseudodiff}\normalfont  Let $(\orb,g)$ be a Riemannian orbifold.   A linear map $A:C^\infty(\orb)\to C^\infty(\orb)$ is said to be a \emph{pseudodifferential operator of order} $m$ if for each orbifold chart $(\Ut, \Gamma_U,\varphi_U)$, there exists a ${\Gamma_U}$-equivariant (i.e., that commutes with the $\Gamma_U$-action) pseudodifferential operator $\At:C^\infty(\Ut)\to C^\infty(\Ut)$ of order $m$ such that $\At (\varphi_U^* f)=\varphi_U^*(A f)$ for all $f\in C^{\infty}(U)$.   In this case, the operator $\At$ is not uniquely defined but is shown in \cite{Bu08} to be unique up to a smoothing operator.  The \emph{classical} pseudodifferential operators on $\orb$ are those for which the $\At$ can be chosen to be classical (in the sense described in \cite[\S 3.7]{Sh}).
\end{defn}

The following proposition gives the global approach.  Recall from Proposition~\ref{m/g} that every orbifold $\orb$ is a quotient of a manifold $M$ by the action of a compact Lie group $G$ acting smoothly and effectively on $M$ with all isotropy groups finite.  Following Remark~\ref{metric_onf}, given a metric $g$ on $\orb$ we can construct a metric on $M$ so that the bundle projection is a Riemannian submersion and the metric restricted to fibers is the bi-invariant metric of volume one on $G$.

\begin{prop}\label{prop.global} Let a compact Lie group $G$ act smoothly and effectively by isometries on a Riemannian manifold $M$ and assume all isotropy groups are finite.  Let $\orb=G\backslash M$. 
Then a linear operator $A: C^\infty(\orb)\to C^\infty(\orb)$ is a pseudodifferential operator of order $m>0$ if and only if there exists a $G$-equivariant pseudodifferential operator $\At: C^\infty(M)\to C^\infty(M)$ of order $m$ such that $A$ is the restriction of $\At$ to the $G$-invariant functions.  If $A$ is classical, then $\At$ can be chosen to be classical.  Moreover:

\begin{enumerate}
\item[(i)] If $A$ is elliptic, $\widetilde A$ can be chosen to be elliptic, and if $\widetilde A$ is elliptic then $A$ is automatically elliptic.
 \item[(ii)] Give $M$ a Riemannian metric that restricts to a bi-invariant metric on the $G$-orbits and such that the projection $M\to G\backslash M=\orb$ is a Riemannian submersion.  If $A$ is positive, respectively bounded below, and symmetric, then $\At$ can be chosen to be positive, respectively bounded below, and symmetric as well.
 \end{enumerate}
\end{prop}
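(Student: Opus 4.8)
The plan is to build $\widetilde A$ from $A$ by averaging over the group $G$, treating the four claims (classical/elliptic/positive-or-bounded-below/symmetric) as successive refinements of the same construction. First I would set up the correspondence between $C^\infty(\orb)$ and $C^\infty(M)^G$, the $G$-invariant smooth functions on $M$: pullback along $\pi_M\colon M\to\orb$ identifies these two spaces, and a pseudodifferential operator on $\orb$ is, by Definition~\ref{def_pseudodiff} together with the local model $U=\Gamma_U\backslash\widetilde U$, exactly one that in each chart lifts to a $\Gamma_U$-equivariant $\Psi$DO of order $m$. So the real content is to pass from these local equivariant lifts to a single global $G$-equivariant lift on $M$. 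For the forward direction I would take a $G$-invariant partition of unity subordinate to a cover of $\orb$ by images of orbifold charts, lift each piece to $M$, patch the local lifts $\widetilde A_U$ together using this partition of unity, and then average the result over $G$ with respect to Haar measure: $\widetilde A:=\int_G g\cdot(\widetilde A_{\mathrm{patched}})\cdot g^{-1}\,dg$. Averaging a $\Psi$DO of order $m$ over a compact group of isometries again gives a $\Psi$DO of order $m$ (the symbol class is preserved, and one checks this against $G$-slices, using that all isotropy is finite so $\pi_M$ is an orbifold submersion off a measure-zero set); the averaged operator is $G$-equivariant by construction, and it agrees with $A$ on $G$-invariant functions because $A$ does and averaging fixes the restriction to invariants. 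The converse direction — that restricting a $G$-equivariant $\Psi$DO to $C^\infty(M)^G$ gives a $\Psi$DO on $\orb$ — is the easier half: in a slice chart around a point of $M$, a $G$-equivariant operator descends to a $\Gamma_U$-equivariant operator on the local quotient, which is precisely the chart condition of Definition~\ref{def_pseudodiff}.

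For the four enumerated refinements: \emph{classicality} is automatic because averaging a polyhomogeneous symbol over $G$ stays polyhomogeneous (the leading and lower-order terms average separately). For \emph{ellipticity} (i), if $A$ is elliptic I would note that after averaging the principal symbol of $\widetilde A$ is $G$-invariant and, being the lift of the principal symbol of $A$, is invertible on the cotangent directions transverse to the orbits; to get full ellipticity on $M$ I would add to $\widetilde A$ a $G$-invariant elliptic correction supported in the orbit directions (e.g.\ the restriction to orbits of a bi-invariant Laplacian of the appropriate order), which does not change the action on $G$-invariant functions since such functions are annihilated by orbit-direction derivatives; the reverse implication (if $\widetilde A$ elliptic then $A$ elliptic) is immediate from the fact that the symbol of $A$ is the restriction of that of $\widetilde A$. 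For (ii), with $M$ given the submersion metric with bi-invariant unit-volume fibers as in Remark~\ref{metric_onf}, the map $f\mapsto \pi_M^*f$ is, up to the fiber-volume normalization, an isometry of $L^2$ spaces onto $L^2(M)^G$; hence \emph{symmetry} of $\widetilde A$ is equivalent to symmetry of its restriction, and averaging a symmetric operator over a group of isometries preserves symmetry, so one averages the symmetrization $\frac12(\widetilde A+\widetilde A^*)$. \emph{Positivity / boundedness below}: on the orthogonal complement of $L^2(M)^G$ one simply adds a large multiple of a $G$-invariant positive elliptic operator that vanishes on invariant functions (again an orbit-direction operator works, since orbit-direction derivatives kill $G$-invariant functions but are strictly positive on their complement), shifting the spectrum on the complement up without touching the invariant part.

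The main obstacle — and the step I would spend the most care on — is verifying that the averaging operation $\widetilde A\mapsto\int_G g\widetilde A g^{-1}\,dg$ genuinely lands in the class of $\Psi$DOs of order $m$ on $M$, rather than merely some continuous operator between Sobolev spaces. This is where the finiteness of the isotropy groups and the slice theorem for compact group actions are essential: near a point of $M$ with isotropy $H$ (finite), a $G$-slice gives coordinates $G\times_H V$, and one must check that conjugating a $\Psi$DO by the $G$-action and integrating produces an operator whose Schwartz kernel is still properly supported with the right conormal singularity along the diagonal and a smooth remainder. The cleanest route is to invoke the already-cited local structure theory of $\Psi$DOs on orbifolds from \cite{Bu08} and \cite{SU} — which is precisely set up to handle equivariant operators under finite (isotropy) groups — and to reduce the $G$-averaging to a fiberwise statement over the base plus a finite-group average in each slice. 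Everything else (partition of unity patching, the elliptic/positive corrections in orbit directions, the $L^2$ isometry) is routine once this structural point is in hand.
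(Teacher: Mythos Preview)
The paper does not actually prove this proposition: immediately after the statement it simply writes ``The only if statement is proven in \cite{Bu08} and the if statement in \cite{SU}.  For statements (i) and (ii), see the proof of \cite[Thm.~3.5]{Bu08}.''  So there is nothing to compare your argument against in the paper itself; the result is quoted as background from Bucicovschi and Stanhope--Uribe.

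Your outline is a reasonable sketch of the kind of argument those references carry out, and you have correctly located the genuine difficulty: showing that the averaging construction (or, equivalently, the passage from local $\Gamma_U$-equivariant lifts on chart domains $\widetilde U\subset\R^n$ to a global $G$-equivariant operator on $M$) yields an honest pseudodifferential operator, not merely a bounded map between Sobolev spaces.  One point to be careful about in your forward direction: the local lifts $\widetilde A_U$ furnished by Definition~\ref{def_pseudodiff} live on the abstract chart covers $\widetilde U$, which are open sets in $\R^n$, not on $M$; the step ``lift each piece to $M$'' therefore needs the slice structure of the $G$-action to identify neighborhoods in $M$ with $G\times_{\Gamma_U}\widetilde U$, and this identification must be handled before you can patch and average on $M$.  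Your orbit-direction corrections for ellipticity and positivity are the standard device (and are what \cite{Bu08} does), though you should make sure the correction has the right order $m$, not just order $2$.
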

The only if statement is proven in \cite{Bu08} and the if statement in \cite{SU}.  For statements $(i)$ and $(ii)$, see the proof of  \cite[Thm. 3.5]{Bu08}.

\begin{prop}[General Spectral Theorem]\label{spec_thm}  Let $\orb$ be a compact Riemannian orbifold and $A$ an elliptic, symmetric, positive pseudodifferential operator on $\orb$ of order {$m > 0$}.  Then $A$ acting on $L^2(\orb)$ is essentially self-adjoint.  There exists an orthonormal basis of $A$-eigenfunctions in $L^2(\orb)$ whose eigenvalues form a discrete subset of $\R$ that is bounded below and diverges to $+\infty$, and each eigenvalue has finite multiplicity.  
\end{prop}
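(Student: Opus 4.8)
The plan is to reduce the statement to the classical spectral theorem for elliptic pseudodifferential operators on compact manifolds by passing to a suitable manifold cover. First I would invoke Proposition~\ref{m/g} to realize $\orb$ as $G\bs M$ for a compact Lie group $G$ acting effectively with finite isotropy on a compact manifold $M$; concretely one may take $M=\F\orb$ and $G=O(n)$. Since $\orb$ is compact, $M$ is compact as well. Next I would apply Proposition~\ref{prop.global}: because $A$ is elliptic, symmetric and positive of order $m>0$, parts (i) and (ii) of that proposition produce a $G$-equivariant pseudodifferential operator $\At$ on $M$ of order $m$ which is simultaneously elliptic, symmetric and positive, and for which $A$ is the restriction of $\At$ to the $G$-invariant functions $C^\infty(M)^G$.

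With $\At$ in hand, I would apply the classical spectral theorem for elliptic self-adjoint pseudodifferential operators on a compact manifold (as in \cite{Sh}) to conclude that $\At$ acting on $L^2(M)$ is essentially self-adjoint, with a complete orthonormal basis of smooth eigenfunctions, eigenvalues forming a discrete subset of $\R$ bounded below (here positivity gives boundedness below) and diverging to $+\infty$, each of finite multiplicity. The remaining work is to descend this information to $\orb$. The key point is that, since $\At$ is $G$-equivariant, the $L^2$-orthogonal projection onto $L^2(M)^G$ commutes with $\At$, so each eigenspace of $\At$ is $G$-invariant and decomposes into its $G$-invariant part and its orthogonal complement. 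Hence $L^2(M)^G$ has an orthonormal basis consisting of $G$-invariant $\At$-eigenfunctions, and the spectrum of $A$ on $L^2(\orb)\cong L^2(M)^G$ is the subset of the spectrum of $\At$ corresponding to eigenspaces with nontrivial $G$-invariant part. Discreteness, boundedness below, divergence to $+\infty$, and finite multiplicity are all inherited from $\At$. Essential self-adjointness of $A$ on $L^2(\orb)$ follows because $\At$ preserves $L^2(M)^G$ and the restriction of an essentially self-adjoint operator to a reducing subspace is essentially self-adjoint; alternatively, one observes directly that the orthonormal basis of $G$-invariant eigenfunctions exhibits $A$ as essentially self-adjoint with the stated spectral properties.

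One subtlety to address carefully is the identification $L^2(\orb)\cong L^2(M)^G$ and the matching of $C^\infty(\orb)$ with $C^\infty(M)^G$: the measure on $\orb$ pushed forward from the Riemannian measure on $M$ (with the fiber normalization of Remark~\ref{metric_onf}) agrees with the Riemannian orbifold measure on $\orb$ up to the fiber volume normalization, so the unitary identification is standard; this is precisely the framework in which Proposition~\ref{prop.global} is stated, so little extra is needed. I expect the main obstacle to be purely bookkeeping: verifying that the restriction of $\At$ to the $G$-invariant subspace genuinely realizes $A$ as given (which is exactly the content of Proposition~\ref{prop.global}) and that ``eigenfunction of $A$ in $L^2(\orb)$'' corresponds under the identification to ``$G$-invariant eigenfunction of $\At$ in $L^2(M)$,'' together with checking that no new obstructions arise at singular points of $\orb$ — but the latter is already handled by the fact that $M$ is an honest compact manifold and all the hard analysis has been transferred there.
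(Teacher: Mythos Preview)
Your proposal is correct and follows essentially the same approach as the paper: the paper does not give a detailed proof but simply notes that the result is established in \cite[Thm.~3.5]{Bu08} (see also \cite[Prop.~2.4]{SU}) by realizing $\orb$ as $G\backslash M$ and applying Proposition~\ref{prop.global}, which is precisely the strategy you have outlined and fleshed out.
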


The Spectral Theorem is proven for orbifolds in \cite[Thm. 3.5]{Bu08} by realizing $\orb$ as $G\backslash M$ and applying the results of Proposition~\ref{prop.global}; see also \cite[Prop. 2.4]{SU}.

We conclude our discussion of general pseudodifferential operators on orbifolds by stating a lemma that will be used in \S\ref{sec:asympt}; see also Proposition \ref{spectrumislocal} in this section.

\begin{lemma}\label{lem 2.1} Let $M$ be a compact Riemannian manifold on which a compact Lie group $G$ acts smoothly and effectively by isometries and assume all isotropy groups are finite.   Let $\At_1$ and $\At_2$ be $G$-equivariant elliptic, self-adjoint pseudodifferential operators that are bounded below and have order $m>0$, and suppose that $\At_1-\At_2$ is a smoothing operator.
Let $A_1$ and $A_2$ be the restrictions of $\At_1$ and $\At_2$ to the space $L^2(M)^G$ of $G$-invariant functions.   Then the eigenvalues of $A_1$ and $A_2$ satisfy
\begin{equation}\label{h-smooth}\lambda_j(A_1)-\lambda_j(A_2)=O(j^{-\infty}).\end{equation}
Moreover, the relation \eqref{h-smooth} holds when $A_1$ and $A_2$ are considered as elliptic, self-adjoint pseudodifferential operators on the  compact Riemannian orbifold $\orb=G\backslash M$.\end{lemma}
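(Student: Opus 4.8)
The plan is to reduce the statement to a standard fact about eigenvalue perturbation under smoothing operators, handling the $G$-equivariance as the only genuinely new ingredient. First I would recall that if $\At_1$ and $\At_2$ are $G$-equivariant, then both preserve the closed subspace $L^2(M)^G$ of $G$-invariant functions, so the restrictions $A_1 = \At_1|_{L^2(M)^G}$ and $A_2 = \At_2|_{L^2(M)^G}$ are well-defined; moreover the difference $A_1 - A_2 = (\At_1-\At_2)|_{L^2(M)^G}$. The operator $\At_1-\At_2$ is smoothing, hence maps any Sobolev space into $C^\infty(M)$; restricted to $G$-invariant functions it is still smoothing as an operator on the closed submanifold-like subspace — more precisely, for every $N$ it is bounded from $H^{-N}(M)^G$ into $H^{N}(M)^G$ with norm controlled by the corresponding norm of $\At_1-\At_2$ on $M$.

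The core step is then the quantitative perturbation estimate. Since $\At_1$ and $\At_2$ are elliptic, self-adjoint, bounded below, of order $m>0$, each has discrete spectrum on $L^2(M)^G$ with eigenvalues $\lambda_j \to +\infty$, and by the Weyl law for the orbifold $\orb = G\bs M$ (or directly for $G$-invariant spectra) one has $\lambda_j(A_i) \asymp j^{m/n'}$ where $n'$ is the dimension of $\orb$. Writing $B = A_1 - A_2$, which is smoothing, I would apply the min–max characterization: for each $j$, choosing the span of the first $j$ eigenfunctions of $A_2$ as a trial subspace gives $\lambda_j(A_1) \le \lambda_j(A_2) + \|B\|_{\mathrm{op}}$ crudely, but to get the $O(j^{-\infty})$ decay one needs the sharper argument that $B$ is not merely bounded but maps into every Sobolev space. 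The standard route (as in Gilkey, or as used for the smooth-perturbation part of such asymptotic results) is: $B$ being smoothing means $B$, $A_1^k B$, and $B A_1^k$ are all bounded on $L^2$ for every $k$; one then compares resolvents or uses that $|\lambda_j(A_1) - \lambda_j(A_2)| \le \|B\, \Pi_j\|$ for a suitable spectral projection $\Pi_j$ onto a window around $\lambda_j$, and $\|B\,\Pi_j\| \le \|B A_1^{-k}\|\,\|A_1^k \Pi_j\| \lesssim C_k \lambda_j^{k}\cdot$(rank factor) — wait, that goes the wrong way, so instead one uses $\|\Pi_j B\| \le \|\Pi_j A_1^{-k}\| \|A_1^k B\| \lesssim \lambda_j^{-k} C_k$, which gives exactly $O(\lambda_j^{-\infty}) = O(j^{-\infty})$ after invoking the Weyl growth of $\lambda_j$. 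I would carry out this resolvent/spectral-projection comparison carefully, citing the analogous manifold computation.

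Finally, for the last sentence of the lemma I would invoke Proposition~\ref{prop.global} and Proposition~\ref{spec_thm}: the operators $A_1, A_2$ on $L^2(M)^G \cong L^2(\orb)$ are precisely elliptic, self-adjoint, bounded-below classical pseudodifferential operators on the compact Riemannian orbifold $\orb = G\bs M$ (the identification $L^2(M)^G \cong L^2(\orb)$ is isometric when $M$ carries the submersion metric of Remark~\ref{metric_onf} with unit-volume fibers), and their spectra as orbifold operators coincide by definition with their spectra as restrictions of $\At_1, \At_2$. Hence \eqref{h-smooth} transfers verbatim.

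The main obstacle, I expect, is making the perturbation estimate genuinely quantitative in the $G$-invariant (equivalently, orbifold) setting rather than just qualitative: one must be sure that the Sobolev mapping properties of the smoothing operator $\At_1 - \At_2$, together with elliptic regularity, descend correctly to the subspace $L^2(M)^G$ and that the Weyl-type lower bound $\lambda_j(A_i) \gtrsim j^{m/n'}$ holds there — both are true because $G$-invariant Sobolev spaces on $M$ are exactly the orbifold Sobolev spaces on $\orb$ and the orbifold heat kernel / Weyl law (available via Proposition~\ref{spec_thm} and the literature on orbifold pseudodifferential operators) applies, but pinning down these identifications cleanly is where the real work lies. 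Everything else is a routine adaptation of the manifold argument.
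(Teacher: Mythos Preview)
Your proposal is correct and follows essentially the same approach as the paper, which simply observes that the case of trivial $G$ is \cite[Lem.~2.1]{GPPS}, that the proof there carries over verbatim to the $G$-equivariant setting, and that the final sentence is an immediate consequence of Proposition~\ref{prop.global}. Your sketch fleshes out what that verbatim argument actually involves (the min--max/spectral-projection estimate combined with Sobolev mapping properties of the smoothing perturbation and the orbifold Weyl law), and you correctly identify that the only point requiring care is checking that these analytic ingredients descend to $L^2(M)^G \cong L^2(\orb)$.
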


When $G$ is trivial,  this result is well-known and a careful proof is given in \cite[Lem. 2.1]{GPPS}. In the general case, the proof is   essentially  verbatim. The last part  of the lemma is now an immediate consequence of Proposition~\ref{prop.global}.

\subsection{The Dirichlet-to-Neumann operator in the orbifold setting}
We now define the Dirichlet-to-Neumann operator on a compact {Riemannian} orbifold {$\orb$} with boundary {$\partial\orb$}.  Recall from Remark~\ref{rem.onf}(i) that $\orb$ can be expressed as a quotient of its orthonormal frame bundle $\F\orb$ under the action of the orthogonal group $O(n)$.  We give $\F\orb$ a Riemannian metric as in Remark~\ref{metric_onf} for which the projection $\pi:\F\orb\to\orb$ is a Riemannian submersion and for which the induced metric on each fiber is the metric arising from the bi-invariant metric of volume one on $O(n)$.  

\begin{lemma}\label{harm_extn}  Let $(\orb,g)$ be a compact Riemannian orbifold with boundary and take $u \in C^\infty(\partial \orb)$.  Then there exists a unique harmonic function $Hu \in C^\infty(\orb)$ for which $Hu |_{\partial \orb} = u$.
\end{lemma}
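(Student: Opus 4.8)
The plan is to reduce the statement to the well-known manifold case (and the classical Dirichlet problem) by lifting to the orthonormal frame bundle, exactly as in the setup preceding the lemma. First I would realize $\orb = O(n)\backslash \F\orb$, where $\F\orb$ is the orthonormal frame bundle equipped with the Riemannian metric from Remark~\ref{metric_onf}, so that $\pi\colon \F\orb \to \orb$ is a Riemannian submersion with bi-invariant fibers. Since $\orb$ has boundary, so does $\F\orb$, with $\partial\F\orb = \pi^{-1}(\partial\orb)$, and $\partial\orb = O(n)\backslash\partial\F\orb$. Given $u \in C^\infty(\partial\orb)$, pull it back to $\tilde u := u\circ\pi \in C^\infty(\partial\F\orb)$, which is $O(n)$-invariant. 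The point is that a function on $\orb$ (resp. $\partial\orb$) is precisely an $O(n)$-invariant function on $\F\orb$ (resp. $\partial\F\orb$), and since $\pi$ is a Riemannian submersion with minimal (indeed totally geodesic, bi-invariant) fibers, the Laplace-Beltrami operator of $\F\orb$ restricts on $O(n)$-invariant functions to the (pullback of the) Laplace-Beltrami operator of $\orb$; this is the standard fact underlying Proposition~\ref{prop.global} and is how the Laplacian on an orbifold is defined via its local charts.

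Next I would invoke the classical solvability of the Dirichlet problem on the compact Riemannian manifold $\F\orb$ with boundary: there is a unique $H\tilde u \in C^\infty(\F\orb)$ with $\Delta_{\F\orb}(H\tilde u) = 0$ and $(H\tilde u)|_{\partial\F\orb} = \tilde u$. The key step is then to observe that this solution is automatically $O(n)$-invariant. Indeed, $O(n)$ acts by isometries on $\F\orb$ preserving $\partial\F\orb$, so for any $\gamma \in O(n)$ the function $\gamma^*(H\tilde u)$ is again harmonic and has the same boundary value $\gamma^*\tilde u = \tilde u$ (since $\tilde u$ is $O(n)$-invariant); by uniqueness, $\gamma^*(H\tilde u) = H\tilde u$. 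Hence $H\tilde u$ descends to a function $Hu \in C^\infty(\orb)$ with $(Hu)|_{\partial\orb} = u$, and $Hu$ is harmonic on $\orb$ because its lift is harmonic and the Laplacian commutes with the projection on invariant functions. Uniqueness on $\orb$ follows by lifting: any harmonic extension of $u$ on $\orb$ lifts to an $O(n)$-invariant harmonic extension of $\tilde u$ on $\F\orb$, which must equal $H\tilde u$ by the manifold uniqueness statement, so it equals $Hu$.

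There is one technical point worth flagging as the main obstacle: verifying that harmonicity on the orbifold corresponds exactly to harmonicity of the invariant lift, and that smoothness on $\orb$ (in the orbifold sense, i.e. smoothness of local charts) is equivalent to smoothness of the $O(n)$-invariant lift on the manifold $\F\orb$. The smoothness equivalence is part of the standard theory of orbifold frame bundles (Remark~\ref{rem.onf} and the discussion around Proposition~\ref{prop.global}), and the identification of Laplacians is exactly the ellipticity/naturality content used implicitly in Proposition~\ref{prop.global}(i)--(ii). I would state these correspondences cleanly and cite the frame bundle construction rather than re-proving them. An alternative, more elementary route avoiding the frame bundle: work directly in orbifold charts $(\wtu, \Gamma_U, \varphi_U)$, solve the local Dirichlet problems on the manifolds $\wtu$ with $\Gamma_U$-invariant boundary data (getting $\Gamma_U$-invariant harmonic functions by the same uniqueness-and-symmetry argument), and patch via a partition of unity together with interior elliptic regularity and the maximum principle to glue the local solutions into a global harmonic extension; uniqueness again follows from the maximum principle applied on the underlying space. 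I would present the frame-bundle argument as the main proof since it is cleanest given the machinery already assembled in this section, mentioning the chart-based argument only if a self-contained treatment is preferred.
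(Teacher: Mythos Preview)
Your proof is correct and follows essentially the same approach as the paper: lift to the orthonormal frame bundle $\F\orb$ with the Riemannian submersion metric, solve the classical Dirichlet problem there, use uniqueness plus the isometric $O(n)$-action to conclude that the harmonic extension of the invariant boundary data is itself $O(n)$-invariant, and descend. The paper's proof is slightly more terse (it states the commutation $\pi^*\circ\Delta_\orb=\Delta_{\F\orb}\circ\pi^*$ as a consequence of the fibers being totally geodesic and does not spell out the chart-based alternative), but the logical content is identical.
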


\begin{proof}  With respect to the Riemannian metric defined above on $\mathcal{F}\orb$, the fibers are totally geodesic.   Consequently, we have $$\pi^*\circ \Delta_\orb  = \Delta_{\mathcal{F}\orb}\circ \pi^*.$$  Thus the harmonic functions on $\orb$ are precisely the functions that pullback to harmonic functions on $\mathcal{F}\orb$.  

 Let $v = \pi^*(u) \in C^\infty(\partial(\F\orb))$, and let $Hv$ be the unique harmonic extension of $v$ to $\mathcal{F}\orb$.  The uniqueness of this extension, the $O(n)$-invariance of $v$, and the fact that $O(n)$ acts on $\mathcal{F}\orb$ by isometries together imply that $Hv$ is $O(n)$-invariant.  Hence there exists a unique harmonic function $Hu \in C^\infty(\orb)$ with $\pi^*(Hu) = Hv$.  By our construction of $Hu$, we have that $Hu |_{\partial \orb} = u$.
\end{proof}

Lemma~\ref{harm_extn} allows us to define the Dirichlet-to-Neumann operator on an orbifold $\orb$ with boundary. We first specify what we mean by the normal derivative of a function across the boundary of $\orb$.

\begin{defn}\label{normalderiv}\normalfont
Let $f\in C^\infty(\orb)$.  We  write $\partial_\nu f$ for the normal derivative of $f$ across the boundary of $\orb$, defined at each $p\in \partial\orb$ as follows:   Let $(\Ut, \Gamma_U, \vp_U)$ be an orbifold chart on $\orb$ with $p\in U$.   By Definition~\ref{defn:ofld}(2), $\vp_U^{-1}(\partial \orb \cap U)=B$ where $B:=\Ut\cap \{x:x_n=0\}$ and $n$ is the dimension of $\orb$.  Let ${\tilde f}= \vp_U^*(f)$, and let ${\tilde p} \in  \vp_U^{-1}(p)$.    The Riemannian metric on $U\subset\orb$ corresponds to a $\Gamma_U$-invariant Riemannian metric on $\Ut$.   Set $\partial_\nu f(p)=\partial_\nu{\tilde f}({\tilde p})$, where $\partial_\nu{\tilde f}$ is the normal derivative of $\tilde f$ across $B$.   The fact that $\Gamma_U$ acts isometrically on $\Ut$ guarantees that this definition is independent of the choice of lift $\tilde p$ of $p$.  
\end{defn}

\begin{defn}\label{ofdD2N}\normalfont  The Dirichlet-to-Neumann operator $\D_{(\orb,g)}$ on $C^\infty(\partial \orb)$ is defined by 
\[\D_{(\orb,g)}(u) = \partial_\nu (Hu)\]
where $Hu$ is the unique harmonic extension of $u \in C^\infty(\partial \orb)$ to $\orb$.  We will sometimes omit the subscript $\orb$ and/or $g$ when this will not cause confusion. 
\end{defn}

With this definition in place, the next proposition examines the relationship between the Dirichlet-to-Neumann operator on $\orb$ and that on $\F\orb$.

\begin{prop}\label{D2Ns_commute}  Let $C^\infty(\partial(\F\orb))^{O(n)}$ denote the space of $O(n)$-invariant smooth functions on $\partial(\F\orb)$.   With respect to the Riemannian metric on $\F\orb$ defined in Remark~\ref{metric_onf}, the restriction of the Dirichlet-to-Neumann operator $\D_{\F\orb}$ to $C^\infty(\partial(\F\orb))^{O(n)}$ corresponds to $\D_{(\orb,g)}$ on $C^\infty(\partial\orb)$.  That is, the following diagram commutes:
\[
\begin{array}{ccc}
C^\infty(\partial(\F\orb))^{O(n)} & \xrightarrow{\D_{\F\orb}} & C^\infty(\partial(\F\orb))^{O(n)}\\
\pi^*\uparrow & & \uparrow\pi^*\\
C^\infty(\partial\orb) & \xrightarrow{\D_{(\orb,g)}} & C^\infty(\partial\orb).
\end{array}
\] 
\end{prop}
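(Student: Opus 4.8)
The plan is to reduce the proposition to two ingredients, the first of which is essentially already contained in the proof of Lemma~\ref{harm_extn}. First I would record that $\pi^*$ intertwines harmonic extensions: given $u\in C^\infty(\partial\orb)$, set $v=\pi^*u$, which lies in $C^\infty(\partial(\F\orb))^{O(n)}$. Since $\pi^*\circ\Delta_\orb=\Delta_{\F\orb}\circ\pi^*$ (established in the proof of Lemma~\ref{harm_extn} using that the fibers are totally geodesic), the function $\pi^*(Hu)$ is harmonic on $\F\orb$ and restricts to $v$ on $\partial(\F\orb)$; by uniqueness of the harmonic extension on the manifold $\F\orb$ we get $H_{\F\orb}v=\pi^*(Hu)$, where $Hu$ is the harmonic extension of $u$ on $\orb$. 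I would also note here that $\partial(\F\orb)=\pi^{-1}(\partial\orb)$: over the image $U$ of an orbifold chart $(\Ut,\Gamma_U,\vp_U)$, the orthonormal frame bundle is $\Gamma_U\backslash\F\Ut$, the frame bundle $\pi_{\F\Ut}:\F\Ut\to\Ut$ of the manifold-with-boundary $\Ut\subset\R^n_+$ has boundary $(\pi_{\F\Ut})^{-1}(B)$ with $B=\Ut\cap\{x_n=0\}$, and passing to the $\Gamma_U$-quotient gives the claim. I would also observe that $\D_{\F\orb}$ is $O(n)$-equivariant (as $O(n)$ acts by isometries, preserving harmonicity, the boundary, and its unit normal), so it does restrict to a map of $C^\infty(\partial(\F\orb))^{O(n)}$, making the top arrow of the diagram meaningful.

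The second and only substantive ingredient is that $\pi$ carries normal derivatives to normal derivatives: for every $f\in C^\infty(\orb)$,
\[
\pi^*(\partial_\nu f)=\partial_\nu(\pi^*f)\quad\text{along }\partial(\F\orb),
\]
where the left-hand $\partial_\nu$ is the orbifold normal derivative of Definition~\ref{normalderiv} and the right-hand one is the ordinary normal derivative on the smooth manifold $\F\orb$. I would check this in an orbifold chart $(\Ut,\Gamma_U,\vp_U)$ about a boundary point $p$, with $B=\Ut\cap\{x_n=0\}$ and $\tilde f=\vp_U^*f$; the associated frame-bundle chart $\pi_{\F\Ut}:\F\Ut\to\Ut$ carries the chosen submersion metric, so $\pi_{\F\Ut}$ is a genuine Riemannian submersion of manifolds with boundary. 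Over $B$ the fibers lie in $\partial(\F\Ut)=(\pi_{\F\Ut})^{-1}(B)$, so the vertical distribution along $\partial(\F\Ut)$ is tangent to it; hence the outward unit normal $\tilde\nu$ of $\partial(\F\Ut)$ is horizontal, and since $d\pi_{\F\Ut}$ is a linear isometry on horizontal vectors taking outward-pointing vectors to outward-pointing vectors, $d\pi_{\F\Ut}(\tilde\nu)$ equals the outward unit normal $\nu$ of $B$. The chain rule then gives, at any $\tilde q$ in the fiber over $q\in B$,
\[
\partial_\nu(\pi^*f)(\tilde q)=d\big(\pi_{\F\Ut}^*\tilde f\big)_{\tilde q}(\tilde\nu)=d\tilde f_q\big(d\pi_{\F\Ut}(\tilde\nu)\big)=\partial_\nu\tilde f(q),
\]
and by Definition~\ref{normalderiv} the right-hand side is precisely $\pi^*(\partial_\nu f)(\tilde q)$. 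Since such charts cover $\orb$, the identity holds on all of $\partial(\F\orb)$.

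Finally I would assemble the pieces: applying the normal-derivative identity to $f=Hu$ and using Step~1,
\[
\D_{\F\orb}(\pi^*u)=\partial_\nu\big(H_{\F\orb}(\pi^*u)\big)=\partial_\nu\big(\pi^*(Hu)\big)=\pi^*\big(\partial_\nu(Hu)\big)=\pi^*\big(\D_{(\orb,g)}u\big),
\]
which is exactly the asserted commutativity $\D_{\F\orb}\circ\pi^*=\pi^*\circ\D_{(\orb,g)}$; together with the fact that $\pi^*:C^\infty(\partial\orb)\to C^\infty(\partial(\F\orb))^{O(n)}$ is a bijection (a smooth function on $\partial(\F\orb)$ descends to $\partial\orb=O(n)\backslash\partial(\F\orb)$ if and only if it is $O(n)$-invariant), this gives the full statement. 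The main obstacle is the normal-derivative step: one must verify that the chart-wise orbifold normal derivative of Definition~\ref{normalderiv} agrees, under $d\pi$, with the honest normal derivative on the smooth manifold $\F\orb$, which hinges on the submersion metric making the fibers tangent to $\partial(\F\orb)$ (so the normal is horizontal) together with the routine but necessary bookkeeping that the horizontal lift of the outward normal is again outward. Everything else is formal, flowing from $\pi^*\circ\Delta=\Delta\circ\pi^*$ and uniqueness of harmonic extensions.
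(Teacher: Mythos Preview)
Your proposal is correct and follows essentially the same approach as the paper: both reduce the claim to showing that $\pi^*$ intertwines normal derivatives (harmonic extensions having already been handled in Lemma~\ref{harm_extn}), and both verify this in an orbifold chart using that $\pi_{\F\Ut}:\F\Ut\to\Ut$ is a Riemannian submersion of manifolds with boundary. The paper's proof is terser, merely asserting that the identity is ``a straightforward consequence'' of Definition~\ref{normalderiv} and the submersion property, whereas you have spelled out the key geometric point---that the fibers over $B$ lie in $\partial(\F\Ut)$, so the outward unit normal is horizontal and is carried by $d\pi_{\F\Ut}$ to the outward unit normal of $B$---which is exactly what the paper leaves implicit.
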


\begin{proof}   For $u\in C^\infty(\partial \orb)$, define $Hu$, $v$ and $Hv$ as in the statement and proof of Lemma~\ref{harm_extn}.  We need to show that $(\pi_{|\partial(\F\orb)})^*(\partial_\nu Hu)$ coincides with the normal derivative of $Hv$ across the boundary of $\F\orb$.   In case $\orb$ is a smooth manifold, this assertion is trivial since $\pi$ is a Riemannian submersion.  For the general case, let $(\Ut, \Gamma_U, \vp_U)$ be an orbifold chart.   Since $U=\Gamma_U\bs \Ut$ and $\F U  =\Gamma_U\bs \F\Ut$, the assertion is a straightforward consequence of the definition of the normal derivative $\partial_\nu Hu$ as given in Definition~\ref{normalderiv}.
\end{proof}

Applying Proposition~\ref{prop.global}, we immediately obtain the following corollary to Proposition~\ref{D2Ns_commute}.

\begin{cor}  The operator $\D_{(\orb,g)}$ is a first-order, positive, elliptic, self-adjoint pseudodifferential operator. 
\end{cor}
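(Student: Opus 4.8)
The plan is to reduce the statement to the corresponding well-known facts about the Dirichlet-to-Neumann operator on a \emph{smooth} compact Riemannian manifold with boundary, and then to transfer those facts through the frame-bundle quotient using the machinery of \S\ref{sec:psido}. By Proposition~\ref{D2Ns_commute}, $\D_{(\orb,g)}$ is precisely the restriction of $\D_{\F\orb}$ to the space of $O(n)$-invariant functions on $\partial(\F\orb)$; by Remark~\ref{rem.onf} the total space $\F\orb$ is an honest smooth compact Riemannian manifold with boundary, so $\partial(\F\orb)$ is a smooth closed manifold carrying an isometric $O(n)$-action with finite isotropy groups and with $O(n)\backslash\partial(\F\orb)=\partial\orb$ (Proposition~\ref{m/g}).

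First I would recall the classical statement that on any smooth compact Riemannian manifold $(N,h)$ with boundary, $\D_{(N,h)}$ is a classical pseudodifferential operator of order $1$ on $\partial N$ with principal symbol $|\xi|_h$ (see, e.g., \cite{LU}); in particular it is elliptic of order exactly one. Green's identity $\langle\D_{(N,h)}u,u\rangle_{L^2(\partial N)}=\int_N|\nabla(Hu)|^2\,d\vol_h\ge 0$ then shows it is symmetric and nonnegative. Applying this with $N=\F\orb$ endowed with the submersion metric of Remark~\ref{metric_onf}, we learn that $\D_{\F\orb}$ is a classical, first-order, elliptic, symmetric, nonnegative pseudodifferential operator on $\partial(\F\orb)$.

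Next I would note that $\D_{\F\orb}$ is $O(n)$-equivariant: $O(n)$ acts on $\F\orb$ by isometries that preserve $\partial(\F\orb)$, while the harmonic extension and the outward normal derivative are built solely from the metric, so $\D_{\F\orb}$ commutes with the $O(n)$-action. I would then invoke Proposition~\ref{prop.global} for the isometric action of $O(n)$ on the closed manifold $\partial(\F\orb)$, whose quotient is $\partial\orb$ and whose metric (the restriction of the one in Remark~\ref{metric_onf}) again restricts to the bi-invariant metric of volume one on each orbit: the restriction of a $G$-equivariant operator of this kind to the invariant functions is again a classical pseudodifferential operator of the same order $1>0$, and parts $(i)$ and $(ii)$ of that proposition carry ellipticity, symmetry and nonnegativity down to $\D_{(\orb,g)}$. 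Essential self-adjointness of $\D_{(\orb,g)}$ on $L^2(\partial\orb)$ then follows from the General Spectral Theorem (Proposition~\ref{spec_thm}).

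I do not anticipate a genuine difficulty: the argument is essentially the combination of Proposition~\ref{D2Ns_commute} with Proposition~\ref{prop.global}, which is why the paper records this as an immediate corollary. The one point I would take a sentence to justify is that $\partial(\F\orb)$ with the induced metric satisfies the hypotheses of Proposition~\ref{prop.global}(ii) --- that the $O(n)$-isotropy groups are finite (they are conjugates of $\operatorname{Iso}(p)\subset O(n)$ by Remark~\ref{rem.onf}(ii)) and that the orbit metrics are bi-invariant of volume one (by the construction in Remark~\ref{metric_onf}). If one prefers to avoid the frame bundle, an equivalent route is to argue directly from Definition~\ref{def_pseudodiff}: over an orbifold chart $\cc$, $\D_{(\orb,g)}$ lifts to the $\Gamma_U$-equivariant Dirichlet-to-Neumann operator of the $\Gamma_U$-invariant metric on the smooth manifold $\wtu$, and the four properties can be read off chart by chart from the smooth case together with Proposition~\ref{prop.global}.
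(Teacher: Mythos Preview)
Your proposal is correct and is exactly the approach the paper takes: the corollary is recorded as an immediate consequence of combining Proposition~\ref{D2Ns_commute} with Proposition~\ref{prop.global}, precisely as you outline. One small remark: Proposition~\ref{prop.global}(ii) as stated only lifts positivity and symmetry \emph{from} the orbifold \emph{to} the manifold, whereas you need the reverse direction; but that direction is trivial (restriction of a symmetric nonnegative operator to an invariant subspace on which $\pi^*$ is an $L^2$-isometry), so this does not affect the argument.
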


\begin{defn}\normalfont The \emph{Steklov spectrum} of $(\orb, g)$ is the eigenvalue spectrum of $\D_{(\orb,g)}$.   We will denote this spectrum by $\Stek(\orb, g)=\{ 0=\sigma_1 \le \sigma_2 \le \sigma_3 \le {\cdots} \}$, sometimes omitting the name of the orbifold or the metric if no confusion will result.
\end{defn}

Next we present two foundational propositions that will be used in later computations.

\begin{prop}\label{spectrumislocal}    Suppose that $(\orb,g)$ and $(\orb',g')$ are compact Riemannian orbifolds with boundary and that there exists an isometry from $\partial \orb$ to $\partial \orb'$ that extends to an isometry $F$ from a neighborhood of $\partial\orb$ in $\orb$ to a neighborhood of $\partial\orb'$ in $\orb'$.  Then the Steklov eigenvalues satisfy
$$\sigma_j(\orb,g)-\sigma_j(\orb',g')=O(j^{-\infty}).$$
\end{prop}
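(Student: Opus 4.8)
The plan is to reduce the statement to Lemma~\ref{lem 2.1} by passing to orthonormal frame bundles and using the fact that the Dirichlet-to-Neumann operator is a classical pseudodifferential operator whose full symbol is determined by the metric in an arbitrarily small collar of the boundary. First I would realize both orbifolds via their orthonormal frame bundles: set $M=\F\orb$ and $M'=\F\orb'$, equip each with the Riemannian submersion metric of Remark~\ref{metric_onf}, and recall from Proposition~\ref{D2Ns_commute} that $\D_{(\orb,g)}$ and $\D_{(\orb',g')}$ are the restrictions of the $O(n)$-equivariant operators $\D_{\F\orb}$ and $\D_{\F\orb'}$ to the spaces of $O(n)$-invariant functions on $\partial(\F\orb)$ and $\partial(\F\orb')$ respectively. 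The isometry $F$ between collar neighborhoods of the two boundaries lifts (via the frame-bundle construction, which is functorial for isometries) to an $O(n)$-equivariant isometry $\tilde F$ between collar neighborhoods of $\partial(\F\orb)$ and $\partial(\F\orb')$, and in particular $\partial(\F\orb)$ and $\partial(\F\orb')$ are $O(n)$-equivariantly isometric; identify them and call the common manifold $N$.

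Next I would use $\tilde F$ to transport $\D_{\F\orb'}$ to an operator on $C^\infty(N)$, so that we may compare it with $\D_{\F\orb}$ on the same manifold. The key point — which is where I invoke the locality of the Dirichlet-to-Neumann operator — is that the full symbols of $\D_{\F\orb}$ and (the transported) $\D_{\F\orb'}$ agree: the symbol of the Dirichlet-to-Neumann operator at a boundary point depends only on the jets of the metric along the boundary (see \cite{LU}), and since $\tilde F$ is an honest isometry on a collar these jets coincide. Hence $\D_{\F\orb}-\tilde F^*\D_{\F\orb'}\tilde F_*$ is a pseudodifferential operator of order $-\infty$, i.e. a smoothing operator on $N$; moreover both operators are $O(n)$-equivariant, elliptic, self-adjoint, first order, and positive, so in particular bounded below. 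Restricting to $O(n)$-invariant functions, this is exactly the hypothesis of Lemma~\ref{lem 2.1} (with $m=1$, $G=O(n)$, $\At_1=\D_{\F\orb}$, $\At_2=\tilde F^*\D_{\F\orb'}\tilde F_*$), whose conclusion $\lambda_j(A_1)-\lambda_j(A_2)=O(j^{-\infty})$ is precisely $\sigma_j(\orb,g)-\sigma_j(\orb',g')=O(j^{-\infty})$.

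I expect the main obstacle to be the justification that $\D_{\F\orb}$ and $\tilde F^*\D_{\F\orb'}\tilde F_*$ differ by a smoothing operator — that is, making precise the "locality" of the Dirichlet-to-Neumann operator in a way that is legitimate on the frame bundle rather than directly on the orbifold. The cleanest route is to invoke the standard construction of a parametrix for the Dirichlet-to-Neumann operator of a smooth Riemannian manifold (Lee–Uhlmann, see \cite{LU}), applied to $\F\orb$ and $\F\orb'$: this produces a first-order classical pseudodifferential operator whose full symbol is computed from a collar of the boundary, agreeing with $\D_{\F\orb}$ (resp. $\D_{\F\orb'}$) modulo smoothing. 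Since $\tilde F$ carries one collar isometrically to the other, it carries one parametrix to the other exactly, and the two genuine operators then differ by a smoothing operator. An alternative, perhaps even lighter, route would be to bypass the frame bundle at the symbolic level by noting that the harmonic extension operator and hence $\D$ can be localized near the boundary up to smoothing directly on the orbifold using orbifold charts and the chart-wise definition of pseudodifferential operators from Definition~\ref{def_pseudodiff}; but the frame-bundle approach has the advantage of reducing everything to the already-cited manifold result \cite[Lem. 2.1]{GPPS} underlying Lemma~\ref{lem 2.1}, so I would carry out the argument that way.
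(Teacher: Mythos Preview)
Your proposal is correct and follows essentially the same route as the paper: lift the collar isometry to the orthonormal frame bundles, invoke \cite{LU} to see that the two Dirichlet-to-Neumann operators (viewed on the common boundary manifold) have identical full symbols and hence differ by a smoothing operator, and then apply Lemma~\ref{lem 2.1} together with Proposition~\ref{D2Ns_commute}. The paper's proof is more terse but the structure and the key ingredients are identical to what you wrote.
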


\begin{proof} The isometry $F$ pulls back to an isometry from a neighborhood of the boundary in $\F\orb$ to a neighborhood of the boundary in $\F\orb'$.   By \cite{LU}, the symbol of the Dirichlet-to-Neumann operator of a Riemannian manifold depends only on the metric in a neighborhood of the boundary.   Thus the Dirichlet-to-Neumann operators of $\F\orb$ and $\F\orb'$ (viewed as operators on $\partial F\orb$, identified via the isometry with $\partial F\orb'$) differ only by a smoothing operator.     Thus the proposition follows from Lemma~\ref{lem 2.1} and Proposition~\ref{D2Ns_commute}.
\end{proof}

\begin{prop}[Variational characterization of eigenvalues]\label{prop.var} Let $(\orb, g)$ be a compact Riemannian orbifold with boundary. For $k=1,2,\dots$, let ${\mathcal E}(k)$ denote the collection of all $k$-dimensional subspaces of the Sobolev space $H^1(\orb)$.   Then the Steklov eigenvalues of $(\orb,g)$ are given by
$$\sigma_k=\min_{E\in{\mathcal E}(k)}\,\max_{0\ne f\in E}\,R_\orb(f)$$ where 
$$R_\orb(f)=\frac{\int_\orb\,|\nabla_g f|^2 \,d\vol_{(\orb,g)}}{\int_{\partial \orb}\,f^2\,d\vol_{(\partial\orb,g)}}.$$
\end{prop}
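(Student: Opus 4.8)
The plan is to derive the variational characterization by reducing to the classical min-max principle for the self-adjoint, positive, elliptic pseudodifferential operator $\D_{(\orb,g)}$ established in the corollary above. First I would recall from the General Spectral Theorem (Proposition~\ref{spec_thm}) that $\D_{(\orb,g)}$, acting on $L^2(\partial\orb)$, has a discrete spectrum $0=\sigma_1\le\sigma_2\le\cdots$ with an orthonormal basis of eigenfunctions $\{u_k\}\subset C^\infty(\partial\orb)$. Since $\D_{(\orb,g)}$ has order $1$, its quadratic form $q(u)=\langle \D_{(\orb,g)}u,u\rangle_{L^2(\partial\orb)}$ has form domain the Sobolev space $H^{1/2}(\partial\orb)$, and by the classical Courant--Fischer theorem applied to this form on $L^2(\partial\orb)$,
$$\sigma_k=\min_{V\in\mathcal V(k)}\ \max_{0\ne u\in V}\ \frac{q(u)}{\|u\|_{L^2(\partial\orb)}^2},$$
where $\mathcal V(k)$ is the set of $k$-dimensional subspaces of $H^{1/2}(\partial\orb)$.

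The next step is to identify the Dirichlet form $q(u)$ with the Dirichlet energy of the harmonic extension. Using Lemma~\ref{harm_extn}, let $Hu\in C^\infty(\orb)$ be the harmonic extension of $u$. I would apply the divergence theorem (Green's identity) on the orbifold: working in orbifold charts and using the $\Gamma_U$-invariance of the metric so that integration on $\orb$ pulls back to $\frac{1}{|\Gamma_U|}$ times integration on $\wtu$, one gets
$$\int_\orb |\nabla_g(Hu)|^2\,d\vol_{(\orb,g)}=\int_{\partial\orb}(\partial_\nu Hu)\,u\,d\vol_{(\partial\orb,g)}=\int_{\partial\orb}(\D_{(\orb,g)}u)\,u\,d\vol_{(\partial\orb,g)}=q(u).$$
Alternatively, and perhaps more cleanly, one can lift everything to $\F\orb$ via Proposition~\ref{D2Ns_commute}: the Dirichlet-to-Neumann operator of the smooth manifold $\F\orb$ restricted to $O(n)$-invariant functions is $\D_{(\orb,g)}$, Green's identity is the standard manifold statement on $\F\orb$, and since $\pi:\F\orb\to\orb$ is a Riemannian submersion with totally geodesic fibers, $|\nabla(\pi^*Hu)|^2=\pi^*|\nabla(Hu)|^2$ and the relevant integrals over $\F\orb$ equal those over $\orb$ (the fibers having volume one). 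Combined with the Dirichlet principle — that among all $H^1(\orb)$ extensions of $u$, the harmonic extension $Hu$ uniquely minimizes the Dirichlet energy — this shows $q(u)=\inf\{\int_\orb|\nabla_g f|^2 : f\in H^1(\orb),\ f|_{\partial\orb}=u\}$.

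The last step is to pass from subspaces of boundary functions $H^{1/2}(\partial\orb)$ to subspaces of $H^1(\orb)$. Given $V\in\mathcal V(k)$ one lifts it via harmonic extension to a $k$-dimensional $E=H(V)\subset H^1(\orb)$; conversely, restriction to the boundary sends any $E\in\mathcal E(k)$ onto a subspace of $H^{1/2}(\partial\orb)$ of dimension $\le k$, and one checks $R_\orb(f)\ge q(f|_{\partial\orb})/\|f|_{\partial\orb}\|^2$ for each $f\in E$ by the Dirichlet principle, with equality when $f=Hf|_{\partial\orb}$; one also needs that the trace map $H^1(\orb)\to H^{1/2}(\partial\orb)$ is surjective and has closed range so that $R_\orb(f)$ is finite exactly when $f|_{\partial\orb}\ne 0$, and that a $k$-dimensional $E$ on which $R_\orb$ is bounded cannot have a nontrivial intersection with the kernel of the trace (else the max would be infinite). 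These standard trace-theory facts transfer to orbifolds because, again via the frame bundle, $H^1(\orb)=H^1(\F\orb)^{O(n)}$ and $H^{1/2}(\partial\orb)=H^{1/2}(\partial\F\orb)^{O(n)}$, and the trace map is equivariant. Matching the two min-max expressions then yields the claimed formula. The main obstacle is the careful bookkeeping of the function spaces and the surjectivity/closed-range properties of the orbifold trace map — none of it is deep, but one must ensure that restricting to $O(n)$-invariant sections does not break the standard Sobolev trace theory, which it does not since $O(n)$ acts by isometries. I expect the harmonic-extension/Green's-identity identification of $q(u)$ with the Dirichlet energy to be the one genuinely substantive point, and it is handled by the frame-bundle reduction together with the totally geodesic fiber property already exploited in Lemma~\ref{harm_extn}.
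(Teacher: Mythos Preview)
Your proposal is correct and, at its core, uses the same frame-bundle reduction as the paper: the paper simply observes that $R_\orb(f)=R_{\F\orb}(\pi^*f)$ (since the fibers have volume one and the submersion is Riemannian) and then invokes the standard manifold min--max formula for $\F\orb$ restricted to $O(n)$-invariant functions, together with Proposition~\ref{D2Ns_commute}. Your write-up is considerably more detailed---you unpack the Courant--Fischer step for $\D_{(\orb,g)}$, the Green's-identity identification of the quadratic form with the Dirichlet energy, and the trace-theory bookkeeping---whereas the paper treats all of this as ``standard in the manifold case'' and reduces to it in one line; the paper also notes, as you do, that one can bypass the frame bundle entirely and argue directly via Green's formula on the orbifold.
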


\begin{proof} In the case of Riemannian manifolds, this variational formula is standard.   In the case of orbifolds, consider the orthonormal frame bundle $\F\orb$ with the Riemannian metric for which $\pi_{\F\orb}:\F\orb\to \orb$ is a Riemannian submersion and the fibers have the metric defined by the bi-invariant metric of volume one on $O(n)$.    Then 
$$R_\orb(f)=R_{\F\orb}(\pi_{\F\orb}^*f).$$   Hence Proposition~\ref{prop.var} follows from the variational formula for the Steklov eigenvalues of
$\F\orb$, restricted to the $O(n)$-invariant functions, together with Proposition~\ref{D2Ns_commute}.
\end{proof}
Note that one can use Green's formula on orbifolds and prove Proposition~\ref{prop.var} directly following the proof as in the manifold setting without lifting the test functions to the frame bundle.

In some special cases, the Steklov eigenvalues and eigenfunctions may be explicitly computed.  We end this section with the computations for the Euclidean ball and the flat half-disk orbisurface.

\begin{ex}[Steklov spectrum of a Euclidean ball]\label{spec.ball} Let $B(0,R)$ be the Euclidean ball of radius $R$ in $\R^n$ centered at the origin.  Each homogeneous harmonic polynomial of degree $m$ is a Steklov eigenfunction with eigenvalue $\frac{m}{R}$.   The spherical harmonics, i.e., the restrictions of the homogeneous harmonic polynomials to the sphere, span the space of $L^2$ functions on the sphere.   Thus the Steklov spectrum of $B(0,R)$ consists precisely of the eigenvalues $\frac{m}{R}$, each with multiplicity given by the dimension of the space of homogeneous harmonic polynomials of degree $m$.

{When} $n=2$, the Steklov spectrum is given by
$$0,\frac{1}{R}, \frac{1}{R}, \frac{2}{R}, \frac{2}{R}, \frac{3}{R}, \frac{3}{R}, {\cdots}$$
and the Steklov eigenfunctions corresponding to $\frac{m}{R}$ are $r^m\cos(m\theta)$ and $r^m\sin(m\theta)$ in polar coordinates.  For our purposes, it will often be more convenient to think in terms of the circumference of the disk instead of its radius.  Let $D$ be a topological disk and let $g_{\ell}^{can}$ be the canonical Euclidean metric on $D$ that makes it into a round disk of circumference $\ell$.  We will use $D(\ell)$ as shorthand for $(D, g_{\ell}^{can})$.  Then
\[
\operatorname{Stek}(D(\ell)) = 0, \frac{2\pi}{\ell}, \frac{2\pi}{\ell}, \frac{4\pi}{\ell}, \frac{4\pi}{\ell}, \frac{6\pi}{\ell}, \frac{6\pi}{\ell}, {\cdots}
\]
\end{ex}

\begin{ex}[Steklov spectrum of the flat half-disk orbisurface]\label{half_disk_orb} 
The \emph{half-disk} orbisurface, which we will denote by $\ohd$, is obtained by taking the quotient of a disk $D$ by a reflection.  This orbifold has a mirror edge of reflector points with $\Z_2$ isotropy type and a single type II boundary component.   The metric $g_{2\ell}^{can}$ on $D(2\ell)$ descends to give a metric $\bar{g}_{\ell}^{can}$ on $\ohd$.  We will use $\ohd(\ell)$ to denote $(\ohd, \bar{g}_{\ell}^{can})$, the flat half disk orbisurface with boundary of length $\ell$.

The Steklov eigenfunctions on $\ohd(\lc)$ pull back to the even spherical harmonics on $D(2\ell)$.  Thus 
$$
\operatorname{Stek}(\ohd(\lc)) = 0,\frac{\pi}{\lc}, \frac{2\pi}{\lc}, \frac{3\pi}{\lc}, {\cdots}
$$
Note that each eigenvalue has multiplicity one.   Comparing with \cite[Thm. 1.4]{GPPS}, we see that $\ohd(\lc)$ cannot be Steklov isospectral to a smooth surface.  
\end{ex}

The Steklov isospectrality of the orbifolds in Example~\ref{ell1ell2} in the introduction follows immediately from Examples~\ref{spec.ball} and \ref{half_disk_orb}.


\section{The inverse tomography problem for orbifolds}\label{sec:invtomog}

Calder\'on's inverse tomography problem asks whether one can determine the conductivity of a medium from voltage and current measurements on the boundary.  See the survey paper \cite{U} and references therein.  As we will review at the end of this section, Calder\'on's problem is closely related (see \cite{LU}) to the following geometric problem:

\begin{quote}
{\emph{Identifiability problem}.}  Given a closed manifold $N$ (not necessarily connected), consider the set $\mathcal{M}$ consisting of all compact connected  Riemannian manifolds $(M,g)$ with boundary $N$.   For each element $(M,g)$ of $\mathcal{M}$, let $\mathcal{D}_{(M,g)}: C^\infty(N)\to C^\infty(N)$ be the associated Dirichlet-to-Neumann map.   To what extent does $\mathcal{D}_{(M,g)}$ determine the topology and geometry of $(M,g)$?
\end{quote}

\noindent \emph{Trivial changes of metric.}  If $(M,g)\in \mathcal{M}$ and $\psi$ is a diffeomorphism of $M$ with $\psi\equiv 1$ on $N=\partial M$, then trivially $\mathcal{D}_{(M,g)}=\mathcal{D}_{(M,\psi^*g)}$.   In dimension two, the same conclusion holds if we further multiply the metric on $M$ by a conformal factor $e^f$, with $f\equiv 0$ on $N$.  We will refer to such changes of metric as trivial changes.

\smallskip

Lassas and Uhlmann \cite{LassasUhlmann} completely solved the identifiability problem in dimension two:  the Dirichlet-to-Neumann operator determines $M$ and determines the metric $g$ on $M$ up to trivial changes. In all higher dimensions, they obtained the same result within the class of all real analytic manifolds $(M,g)$ with real analytic boundary $N$.  Their results are in fact stronger than stated here, as they only required partial knowledge of the Dirichlet-to-Neumann map.  See also \cite{B}.

We now consider the generalization of this problem to orbifolds.  The theorem below contrasts with the results of Lassas-Uhlmann.

\begin{thm}\label{thm.cone}
For $k=1,2,\dots$, let $D(2\pi k)$ denote the flat disk of circumference $2\pi k$ in $\R^2$. Let $\mathcal{C}_k$ be the cone $\mathcal{C}_k=\Z_k\bs D(2 \pi k)$ where the action of $\Z_k$ on $D(2 \pi k)$ is generated by rotation through angle $\frac{2\pi}{k}$ about the origin.  Then the disk $D(2\pi)$ and the cone $\mathcal{C}_k$ give rise to the same Dirichlet-to-Neumann operator on the circle:    
$$\D_{D(2\pi)}=\D_{\mathcal{C}_k} \mbox{\,\,for\,every\,\,}k=1,2,\dots.$$
\end{thm}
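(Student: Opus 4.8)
The plan is to reduce the asserted identity of operators to a one-line Fourier computation, using the fact --- which follows from Lemma~\ref{harm_extn}, the classical uniqueness of the harmonic extension on a flat disk, and the $\Z_k$-invariance of $\Delta$ --- that a function on $\mathcal{C}_k$ is harmonic exactly when it pulls back to a $\Z_k$-invariant harmonic function on $D(2\pi k)$. The point to keep in mind is that $D(2\pi k)$ is the flat disk of \emph{radius} $k$, so its boundary circle has length $2\pi k$ and descends under the $\Z_k$-action to a circle of length $2\pi$; the factor $k$ coming from the radius will cancel against the factor $k$ coming from the cone angle $\tfrac{2\pi}{k}$.

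First I would fix coordinates. Realize $D(2\pi k)$ as the Euclidean disk $\{r\le k\}$ with polar coordinates $(r,\theta)$, with $\Z_k$ acting by $\theta\mapsto\theta+\tfrac{2\pi}{k}$, and parametrize the boundary circle $\partial\mathcal{C}_k$ by its arc length $t\in\R/2\pi\Z$, so that $t=k\theta \pmod{2\pi}$ on $\partial D(2\pi k)$. The harmonic functions on $D(2\pi k)$ that are smooth at the origin are spanned by $r^{|m|}e^{im\theta}$, $m\in\Z$, and the $\Z_k$-invariant ones are precisely those with $k\mid m$. Hence a boundary datum $u$ on $\partial\mathcal{C}_k$ with Fourier expansion $u=\sum_{j\in\Z}a_j e^{ijt}$ pulls back to $\sum_j a_j e^{ijk\theta}$ on $\partial D(2\pi k)$, whose harmonic extension is $Hu=\sum_j a_j (r/k)^{k|j|}e^{ijk\theta}$; this is $\Z_k$-invariant and hence descends to the harmonic extension on $\mathcal{C}_k$.

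Next I would compute the Dirichlet-to-Neumann operator. Since the cone point of $\mathcal{C}_k$ is an interior singular point, $\partial\mathcal{C}_k$ consists entirely of regular points, so by Definition~\ref{normalderiv} the orbifold normal derivative along $\partial\mathcal{C}_k$ is just the ordinary outward radial derivative $\partial_r$ of the lift evaluated at $r=k$. Differentiating $(r/k)^{k|j|}$ and setting $r=k$ yields $\tfrac{k|j|}{k}=|j|$, so $\D_{\mathcal{C}_k}u=\sum_j |j|\,a_j e^{ijt}$; that is, $\D_{\mathcal{C}_k}$ multiplies the Fourier mode $e^{ijt}$ by $|j|$. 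On the other hand, $D(2\pi)$ is the disk of radius $1$ with arc-length boundary coordinate $t=\theta\in\R/2\pi\Z$, and by the computation in Example~\ref{spec.ball} its Dirichlet-to-Neumann operator also sends $\sum_j a_j e^{ijt}$ to $\sum_j |j|\,a_j e^{ijt}$. Since both operators act identically on the Fourier basis of $C^\infty(\R/2\pi\Z)=C^\infty(\partial D(2\pi))=C^\infty(\partial\mathcal{C}_k)$, they coincide, which is the assertion.

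The computation is elementary, and I do not expect a substantive analytic obstacle; the two genuinely delicate points are bookkeeping rather than analysis. First, one must choose the arc-length parametrizations so that $C^\infty(\partial\mathcal{C}_k)$ and $C^\infty(\partial D(2\pi))$ are \emph{literally identified} as functions on one common circle $\R/2\pi\Z$ --- not merely isometric via some ambiguous diffeomorphism --- since the statement $\D_{D(2\pi)}=\D_{\mathcal{C}_k}$ is an equality of operators on a shared domain. Second, one must justify that restricting the harmonic extension to $\Z_k$-invariants on the disk genuinely produces the harmonic extension on the orbifold $\mathcal{C}_k$ in the sense of Definition~\ref{ofdD2N}, which is exactly where the uniqueness clause of Lemma~\ref{harm_extn} enters.
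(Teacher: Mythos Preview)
Your proof is correct and follows essentially the same route as the paper's: both identify the boundary circles via arclength with $\R/2\pi\Z$, compute the action of each Dirichlet-to-Neumann operator on the Fourier basis (the paper uses $\cos(js),\sin(js)$ where you use $e^{ijt}$), and observe that in each case the $j$th mode is multiplied by $|j|$ because the factor $k$ from the radius cancels the factor $k$ from the $\Z_k$-invariance. Your explicit write-up of the harmonic extension and your remarks on identifying the two boundary circles and invoking Lemma~\ref{harm_extn} are a bit more detailed than the paper's, but the argument is the same.
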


 \begin{proof} It suffices to show that the Dirichlet-to-Neumann operators of $D(2\pi)$ and $\mathcal{C}_k$ have exactly the same spectrum and, for each eigenvalue, the same eigenfunctions.  (Here, the eigenfunctions are functions on the \emph{circle}, as opposed to what we have been referring to as Steklov eigenfunctions, which are the harmonic extensions of the Dirichlet-to-Neumann eigenfunctions.)      By Example~\ref{spec.ball}, we have
 $$\operatorname{Stek}(D(2\pi)) = 0, 1,1, 2,2,3,3,\dots,$$
 and the Dirichlet-to-Neumann eigenspace corresponding to the eigenvalue $j$ is spanned by $\cos(js)$ and $\sin(js)$, where $s\,\,(=\theta)$ is the arclength coordinate on the circle.
 
The Steklov eigenfunctions of $\mathcal{C}_k$ pull back to  $\Z_k$-invariant Steklov eigenfunctions on $D(2\pi k)$.  By Example~\ref{spec.ball}, these are precisely the constant function and the eigenfunctions $r^m\cos(m\theta)$ and $r^m\sin(m\theta)$ as $m=jk$ varies over all positive integer multiples of $k$.  Since $R= k$ in the notation of Example~\ref{spec.ball}, the eigenvalue associated with the eigenspace spanned by $r^{jk}\sin(jk\theta)$ and  $r^{jk}\cos(jk\theta)$ is $\frac{jk}{k}=j$.  In particular,
 $$\operatorname{Stek}(\mathcal{C}_k) = 0, 1,1, 2,2,3,3,\dots$$
 Again letting $s$ denote the arclength coordinate (suitably initialized) on the boundary circle of $\mathcal{C}_k$, the covering map from the boundary circle $r=k$ of $D(2\pi k)$ to  $\partial\mathcal{C}_k$ is given by $(k,\theta)\mapsto s=k\theta$.  Thus the Dirichlet-to-Neumann eigenspace of $\D_{\mathcal{C}_k}$ for the eigenvalue $j$ is spanned by $\sin(js)$ and  $\cos(js)$.  The theorem follows.
  \end{proof}

The proof of Theorem~\ref{thm.cone} used the fact that the dimension of the space of homogeneous harmonic polynomials of degree $m$ on $\R^2$ is independent of $m$.  The analogous statement fails in higher dimensions.

By Remark~\ref{rem.euler}, the Euler characteristic of the cone $\mathcal{C}_k$ is $\frac{1}{k}$. Hence, we conclude:

\begin{cor}\label{orb_isosp_man} Within the class of all Riemannian orbisurfaces:
\begin{enumerate}
\item The Dirichlet-to-Neumann  map does \emph{not} always detect the presence or type of interior singularities.  In particular, an orbisurface with interior singularities but smooth boundary $N$ can have the same Dirichlet-to-Neumann map as a smooth surface with boundary $N$.
\item The Dirichlet-to-Neumann map does not determine the Euler characteristic of an orbisurface.

\end{enumerate}
\end{cor}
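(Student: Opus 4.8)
The plan is to read off both assertions directly from Theorem~\ref{thm.cone}, since all of the analytic content is already contained there; the corollary is purely a matter of comparing topological and geometric invariants of the two orbisurfaces it produces. First I would fix an integer $k\ge 2$ and recall from Theorem~\ref{thm.cone} the flat disk $D(2\pi)$, which is a smooth Riemannian surface, and the flat cone $\mathcal{C}_k=\Z_k\bs D(2\pi k)$, whose only singular point is the interior cone point of order $k$ (see Example~\ref{exa:flat cone}). Both have boundary $N$ a circle of circumference $2\pi$, and by Theorem~\ref{thm.cone} they induce the identical Dirichlet-to-Neumann operator $\D_{D(2\pi)}=\D_{\mathcal{C}_k}$ on $C^\infty(N)$.

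For statement (1), I would observe that $\mathcal{C}_k$ has a nontrivial interior singularity while $D(2\pi)$ has none, yet the two share the same Dirichlet-to-Neumann operator; hence an orbisurface with interior singularities and smooth boundary can have the same Dirichlet-to-Neumann map as a smooth surface with the same boundary, and the operator cannot detect the presence of interior singularities. Letting $k$ range over $\{2,3,\dots\}$ produces orbisurfaces $\mathcal{C}_k$ all with boundary $N$ and pairwise equal Dirichlet-to-Neumann operators (each equal to $\D_{D(2\pi)}$), but with interior cone points of distinct orders $k$; this shows the operator does not detect the type of interior singularity either.

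For statement (2), I would invoke Remark~\ref{rem.euler}(ii), by which $\chi(\mathcal{C}_k)=\frac{1}{k}$, whereas $\chi(D(2\pi))=1$ since the disk is contractible (equivalently, $D(2\pi k)$ is a $k$-sheeted orbifold cover of $\mathcal{C}_k$, so Remark~\ref{rem.euler}(i) gives $1=k\,\chi(\mathcal{C}_k)$). Thus for any $k\ge 2$ we have two orbisurfaces with identical Dirichlet-to-Neumann operators but $\chi(D(2\pi))=1\ne\tfrac1k=\chi(\mathcal{C}_k)$, so the Dirichlet-to-Neumann map does not determine the orbifold Euler characteristic.

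There is essentially no obstacle at this stage: the whole difficulty of the result lies in the spectral and eigenfunction coincidence established in Theorem~\ref{thm.cone} (which itself rests on the fact, noted just after its proof, that the dimension of the space of degree-$m$ homogeneous harmonic polynomials on $\R^2$ is independent of $m$). Once that theorem is available, the corollary is immediate; the only point needing the slightest care is restricting to $k\ge 2$ so that $\mathcal{C}_k$ is genuinely singular.
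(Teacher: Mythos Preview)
Your proposal is correct and follows exactly the paper's approach: the paper simply remarks (just before the corollary) that $\chi(\mathcal{C}_k)=\tfrac{1}{k}$ by Remark~\ref{rem.euler} and then states the corollary as an immediate consequence of Theorem~\ref{thm.cone}, without giving a separate proof. Your write-up is, if anything, slightly more detailed than the paper's one-line justification.
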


Greenleaf, Lassas and Uhlmann \cite{GLU} constructed counterexamples to the identifiability problem in the setting of smooth domains with \emph{singular metrics}.    On the other hand, in Theorem~\ref{thm.cone}, the metrics are smooth (in fact, Euclidean) but we have one smooth domain and one orbifold with an \emph{orbifold singularity}.

We now recall the relationship between the identifiability problem above and the inverse tomography problem.   Let $\Omega$ be a bounded domain in $\R^n$.   For $\gamma$ a positive-definite matrix-valued function on $\R^n$ (the anisotropic conductivity), the inverse tomography problem asks whether one can recover $(\Omega, \gamma)$ from the voltage-to-current map $\Lambda_\gamma:C^\infty(\partial \Omega)\to C^\infty(\partial \Omega)$ given by 
$$\Lambda_\gamma(u)=\langle\gamma \nabla \tilde{u}, \nu\rangle|_{{\partial \Omega}}$$
where $\langle\cdot,\cdot\rangle$ is the Euclidean inner product, $\nu$ is the outward pointing unit normal vector field to $\partial \Omega$, and $\tilde{u}\in C^\infty(\Omega)$ satisfies 
$$\begin{cases}
L_\gamma(\tilde{u})=0\\

\tilde{u}_{|\partial \Omega} =u.
\end{cases}$$
with
$$L_\gamma(\tilde{u})={\rm div}(\gamma\nabla \tilde{u})=\sum_{i,j=1}^n\,\frac{\partial}{\partial x^i}\gamma^{ij}\frac{\partial \tilde{u}}{\partial x^j}$$

Lee and Uhlmann \cite{LU} observed that when $n\geq 3$, the voltage-to-current map can be expressed as the Dirichlet-to-Neumann operator $\mathcal{D}_{(\Omega,g)}$ with respect to a suitable Riemannian metric on $\Omega$.  Note that in arbitrary dimension, if $\gamma$ is the constant identity matrix, then the voltage-to-current map coincides with the Dirichlet-to-Neumann operator for the Euclidean metric on $\Omega$.   This is the case in Theorem~\ref{thm.cone} if one allows orbifolds.

\section{Asymptotics and invariants of the Steklov spectrum on orbisurfaces}\label{sec:asympt}

In this section, $\orb$ will always denote a compact orbisurface with boundary. The boundary of such an orbisurface $\orb$  consists of finitely many closed one-dimensional orbifolds.   As mentioned in Remark~\ref{typeI_II}, every closed one-dimensional orbifold is of one of two types\label{h-orbitype}:  a circle (type I) or the quotient of a circle by a reflection (type II).

\begin{defn}\label{data}\normalfont (i) Given a compact Riemannian orbisurface $(\orb,g)$ with boundary consisting of $r$ type I boundary components of lengths $\ell_1,\dots, \ell_r$ and $s$ type II boundary components of lengths $\bar{\ell_1},\dots, \bar{\ell_s}$, we will refer to the ordered pair $(L;\cb)$ of multisets $L=\{\lc_1,\dots, \lc_r\}$ and $\cb=\{\bar{\lc_1},\dots, \bar{\lc_s}\}$ as the \emph{boundary data} of $(\orb,g)$. 

(ii) Given $\ell_1,\dots, \ell_r, \bar{\ell_1},\dots, \bar{\ell_s}\in \R^+$, let $$S({\ell}_1,\dots, {\ell}_r;\bar{\ell_1},\dots, \bar{\ell_s}) =D{(\ell_1)\sqcup\cdots\sqcup D(\ell_r)\sqcup \ohd}(\bar{\ell_1})\sqcup\dots\sqcup\ohd(\bar{\ell_s})$$
where $D(\ell)$ and $\ohd(\bar{l})$ are defined as in Examples~\ref{spec.ball} and \ref{half_disk_orb}.
We will refer to $$S({\ell}_1,\dots, {\ell}_r;\bar{\ell_1},\dots, \bar{\ell_s})$$ as the \emph{canonical Riemannian orbisurface} with the given boundary data.
\end{defn}

In \S\S\ref{subsecasy}, we will show that the asymptotics of the Steklov spectrum of a Riemannian orbisurface $(\orb, g)$ are uniquely determined by the boundary data of $(\orb, g)$.  We will do this by showing that the Steklov spectrum of $(\orb,g)$ is asymptotic to that of the canonical Riemannian orbisurface with the same boundary data.  In \S\S\ref{invariants}, we will address the converse statement.

\subsection{Determining the Steklov asymptotics from the boundary data}\label{subsecasy}

\begin{thm}\label{thm.asympt}  In the notation of Definition~\ref{data}, let $(\orb,g)$ be a {compact Riemannian orbisurface} with boundary consisting of $r$ type I boundary components of lengths ${\ell}_1,\dots, {\ell}_r$ and $s$ type II boundary components of lengths ${\bar{\ell_1},\dots, \bar{\ell_s}}$. Then,
$$\Stek(\orb,g)\sim \Stek(S({\ell}_1,\dots, {\ell}_r;{\bar{\ell_1},\dots, \bar{\ell_s}}))$$
where for sequences $A=\{a_j\}$ and $B=\{b_j\}$, we write $A\sim B$ to mean $a_j-b_j=O(j^{-\infty}).$
\end{thm}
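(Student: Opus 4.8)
The plan is to reduce the statement to the smooth manifold case handled in \cite{GPPS} by passing to the orthonormal frame bundle and invoking the local nature of the Dirichlet-to-Neumann symbol together with Lemma~\ref{lem 2.1}. First I would recall, from \cite{GPPS} (their Theorem~1.1 and its proof), that for a smooth compact Riemannian surface $(N,h)$ with boundary components of lengths $\ell_1,\dots,\ell_m$, one has $\Stek(N,h)\sim\Stek(D(\ell_1)\sqcup\dots\sqcup D(\ell_m))$; the mechanism there is that the Dirichlet-to-Neumann operator is a first-order elliptic pseudodifferential operator on $\partial N$ whose full symbol depends only on the $1$-jet of $h$ along $\partial N$, and after a conformal change supported near the boundary (which by the two-dimensional conformal invariance does not change $\D$) one may assume a collar isometric to a flat half-cylinder, reducing each boundary circle to the model flat disk up to a smoothing perturbation. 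The key point I want to extract and reuse is the ``collar reduction plus Lemma~\ref{lem 2.1}'' package, now in the $G$-equivariant form already stated in this paper.

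The first concrete step is to treat the two types of boundary components separately. For a type I component $\mathcal{C}$ of length $\ell_i$, a neighborhood of $\mathcal{C}$ in $\orb$ consists entirely of regular points (a circle has trivial isotropy and, being codimension one in a surface, cannot bound reflectors on both sides without itself being singular), so a collar neighborhood of $\mathcal{C}$ is an honest smooth surface with boundary and the flat model is $D(\ell_i)$ exactly as in \cite{GPPS}. For a type II component $\bar{\mathcal{C}}$ of length $\bar\ell_j$, by Example~\ref{exa:dim1ofds} and Remark~\ref{typeI_II} a collar neighborhood is a neighborhood of a type II boundary in an orbisurface; its orientation double cover (doubling across the two mirror edges emanating from the two singular points of $\bar{\mathcal C}$, in the sense of Remark~\ref{covering}(3)) is a smooth collar of a circle of length $2\bar\ell_j$, carrying a $\Z_2$-action by the reflection, and the model is $\ohd(\bar\ell_j)=\Z_2\bs D(2\bar\ell_j)$. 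I would make this precise by working on $\F\orb$: by Proposition~\ref{D2Ns_commute}, $\D_{(\orb,g)}$ is the restriction of $\D_{\F\orb}$ to $O(n)$-invariant functions, and $\F\orb$ is a smooth compact manifold, so near $\partial(\F\orb)$ I may perform the \cite{GPPS}-type collar straightening $O(n)$-equivariantly (the conformal factor can be chosen $O(n)$-invariant and, by Proposition~\ref{spectrumislocal}'s underlying principle from \cite{LU}, the symbol change is smoothing). Similarly $\F(S(\ell_1,\dots;\bar\ell_1,\dots))$ is a smooth manifold with an $O(n)$-action whose boundary collar is isometric, equivariantly, to the corresponding collar of $\F\orb$ after straightening.

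With both boundaries straightened to isometric equivariant model collars, $\D_{\F\orb}$ and $\D_{\F S}$ differ by a $G$-equivariant smoothing operator (here $G=O(2)$), since the full symbol of a Dirichlet-to-Neumann operator depends only on the metric near the boundary \cite{LU}. Then Lemma~\ref{lem 2.1}, applied with $\At_1=\D_{\F\orb}$, $\At_2=\D_{\F S}$ (both first-order, elliptic, self-adjoint, bounded below, $G$-equivariant), gives $\sigma_j(\orb,g)-\sigma_j(S)=O(j^{-\infty})$, which is exactly $\Stek(\orb,g)\sim\Stek(S)$; the Steklov spectrum of $S$ is then read off from Examples~\ref{spec.ball} and \ref{half_disk_orb}. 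I expect the main obstacle to be the bookkeeping in the equivariant collar straightening near the \emph{singular} points of a type II component: at the two $\Z_2$-orbifold points the frame-bundle fiber degenerates (Remark~\ref{rem.onf}(ii)), so one must check that the \cite{GPPS} conformal-change-and-collar argument can genuinely be carried out $O(2)$-equivariantly on $\F\orb$ across these points — equivalently, that the conformal factor used in \cite{GPPS} to flatten the collar is invariant under the reflection fixing the mirror edge. This is plausible because the construction in \cite{GPPS} is canonical (built from the induced metric on the boundary and the geodesic normal collar), hence commutes with any isometry preserving the boundary, but making it airtight is where the real work lies; everything else is a direct transcription of \cite{GPPS} combined with the orbifold pseudodifferential machinery already set up in \S\ref{sec:psido}.
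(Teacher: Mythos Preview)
There is a genuine gap in your plan: the orthonormal frame bundle $\F\orb$ of an orbisurface is a \emph{three}-dimensional manifold (the fibers $O(2)$ have dimension one), so the \cite{GPPS} collar straightening cannot be performed there. That argument is intrinsically two-dimensional: it uses that a conformal change of metric leaves the Dirichlet-to-Neumann operator unchanged (so the conformal flattening costs nothing), and it uses Edward's computation that on a simply-connected plane domain the full symbol of $\D$ is exactly $\|\xi\|$. Neither fact survives in dimension three. In particular, your claim that after an $O(2)$-invariant conformal change on $\F\orb$ ``the symbol change is smoothing'' is false in that dimension; a conformal change of a $3$-dimensional metric alters the principal and subprincipal symbols of $\D$ nontrivially. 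Consequently the comparison $\D_{\F\orb}-\D_{\F S}=\text{smoothing}$ is not available by this route.

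The paper stays in dimension two throughout. It first isolates the one genuinely new case, $\orb=\ohd$ with an arbitrary metric, and handles it by a $\Z_2$-equivariant argument on the disk double cover rather than an $O(2)$-equivariant argument on the frame bundle. The point you correctly identify as ``where the real work lies'' --- making the conformal flattening commute with the reflection --- is exactly what the paper proves as Lemma~\ref{lem.D}: any $\tau_0$-invariant metric on the disk is $\tau_0$-equivariantly conformal to a plane domain $(\Om,e^\delta g_{\euc})$, obtained via normalized Ricci flow on a doubled sphere (so that isometries are automatically preserved). With this in hand, Edward's symbol computation puts the lifted operators $\D_{(\Om,e^\delta g_{\euc})}$ and $\D_{D(2\ell)}$ into the hypotheses of Lemma~\ref{lem 2.1} with $G=\Z_2$, giving Step~1. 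The general case then follows your own outline (doubling across reflectors, capping off collars) via Proposition~\ref{spectrumislocal}, which is the only place the frame bundle is actually used --- and there no conformal argument is needed, only that isometric collars give equal symbols.
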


The proof of Theorem~\ref{thm.asympt} is similar to that in the manifold case \cite{GPPS} and proceeds through the steps outlined below.  The argument uses two crucial tools specific to dimension two, namely Lemma~\ref{lem.D} and the fact that conformally equivalent Riemannian metrics yield the same harmonic functions. 

\begin{itemize}
\item[Step 1.] We first prove the theorem in the special cases that $\orb$ is diffeomorphic to either a disk or to the half-disk orbifold $\ohd$. (The case of the disk is already carried out in \cite{GPPS}.)
\item[Step 2.] We observe that the boundary of $\orb$ is diffeomorphic to the boundary of the disjoint union $S$ of $r$ disks and $s$ half-disk orbifolds.   We construct a Riemannian metric $h$ on this disjoint union in such a way that a neighborhood of $\partial S$ in $(S,h)$ is isometric to a neighborhood of $\partial \orb$ in $(\orb,g)$.
\item[Step 3.]  We conclude by using Proposition~\ref{spectrumislocal} to see that $\Stek(\orb,g)\sim \Stek(S,h)$, and Step 1 to see that $\Stek(S,h)\sim \Stek(S({\ell}_1,\dots, {\ell}_r;{\bar{\ell_1},\dots, \bar{\ell_s}}))$.
\end{itemize}

In preparation for Step 1, we begin with the statement of uniformization for Riemannian metrics on $\ohd$.  We include the proof of the lemma here for completeness as we could not find a proof in the literature.

\begin{lemma}\label{lem.D} Let $D= \{(x,y) \in \R^2: x^2+y^2 \leq 1\}$ be a unit disk, $\tau_0$ the reflection of $D$ across the $x$-axis, and $g$ an arbitrary smooth $\tau_0$-invariant Riemannian metric on $D$.   Then there exists an isometry $F: (D,g)\to (\Om, e^\delta \geuc)$, where $(\Om,\geuc)$ is a simply-connected Euclidean domain and $\delta\in C^\infty(\Om)$.  Moreover, $\tau:= F\circ\tau_0\circ F^{-1}$ is an isometry (a reflection) of $(\Om, \geuc)$ leaving $\delta$ invariant.  Thus $\tau$ is also an isometry of $(\Om, e^\delta \geuc)$.
\end{lemma}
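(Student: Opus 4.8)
The plan is to reduce the statement for the orbifold $\ohd$ (i.e. the disk $D$ with a $\tau_0$-invariant metric $g$) to the classical uniformization of simply-connected Riemannian surfaces, and then to track the reflection symmetry through the uniformizing map. The key observation is that $(D,g)$ is a compact simply-connected surface with boundary, so by the uniformization theorem for surfaces with boundary (or by doubling $D$ across its boundary circle, uniformizing the resulting sphere, and restricting) there is a conformal diffeomorphism $F\colon (D,g)\to (\Omega,\geuc)$ onto a simply-connected planar domain $\Omega$, with $F^*(\geuc)=e^{-\delta\circ F}g$ for some $\delta\in C^\infty(\Omega)$; equivalently $F\colon(D,g)\to(\Omega,e^{\delta}\geuc)$ is an isometry. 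This is the step I would cite rather than prove, as it is standard.

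**Carrying the reflection across.** The new content is that the reflection $\tau_0$ descends to a reflection of $\Omega$. First I would form the conjugate $\tau:=F\circ\tau_0\circ F^{-1}$, which is a priori only a diffeomorphism of $\Omega$. Since $\tau_0$ is a $g$-isometry and $F$ is conformal, $\tau$ is a conformal automorphism of $(\Omega,\geuc)$; moreover $\tau$ is orientation-reversing (because $\tau_0$ is) and satisfies $\tau^2=\mathrm{id}$ with a nonempty fixed-point set (the image under $F$ of the $x$-axis segment, which is a smooth curve fixed pointwise by $\tau$). An orientation-reversing conformal involution of a planar domain is an anticonformal map; I would argue that an anticonformal involution of a simply-connected domain with a fixed curve is, after composing with the Riemann map to a standard model (disk or half-plane), conjugate to complex conjugation, hence is itself a ``reflection'' in the sense of being the restriction of a Euclidean reflection (or an inversion) — and since we are free to choose $\Omega$ within its conformal class, we may arrange $\Omega$ to be symmetric under an honest Euclidean reflection $\tau$. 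The identity $F\circ\tau_0=\tau\circ F$ then gives $\tau^*(e^\delta\geuc)=e^\delta\geuc$ directly from the fact that $F$ is an isometry $(D,g)\to(\Omega,e^\delta\geuc)$ and $\tau_0$ is a $g$-isometry; comparing with $\tau^*\geuc=\geuc$ forces $\delta\circ\tau=\delta$, i.e. $\tau$ leaves $\delta$ invariant, as claimed.

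**Main obstacle.** The routine part is the existence of the conformal uniformization; the delicate point is the normalization of the reflection, i.e. showing that after a suitable choice of the conformal model $\Omega$ the conjugated involution $\tau$ is a genuine Euclidean reflection and not merely an abstract anticonformal involution. This requires knowing that an anticonformal involution of a simply-connected planar domain with nonempty fixed-point set is conformally conjugate to $z\mapsto\bar z$ on a half-plane (or disk), which follows from the classification of involutions via the Riemann mapping theorem together with the Schwarz reflection principle. I would also need to check the invariance $\delta\circ\tau=\delta$ carefully, but once $F$ is realized as an honest isometry onto $(\Omega,e^\delta\geuc)$ this is immediate: $\tau=F\circ\tau_0\circ F^{-1}$ is automatically an isometry of $(\Omega,e^\delta\geuc)$, and being simultaneously an isometry of $(\Omega,\geuc)$ it must preserve the conformal factor.
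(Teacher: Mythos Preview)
Your approach is correct and takes a genuinely different route from the paper. The paper extends $g$ $\tau_0$-invariantly to a slightly larger disk, doubles across the new boundary to obtain a sphere with a $\tau_S$-invariant metric, runs normalized Ricci flow to reach a conformally equivalent constant-curvature metric (the flow preserves $\tau_S$ automatically), maps to the round sphere so that the fixed set becomes a great circle, and then stereographically projects from a point on that great circle lying outside the image of $D$; this lands in the plane with $\tau$ an honest Euclidean reflection. Your argument instead cites classical uniformization for surfaces with boundary to get a conformal map to some planar domain, and then post-composes with a Riemann map and a disk automorphism to conjugate the resulting anticonformal involution to $z\mapsto\bar z$. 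Your route is more elementary in that it avoids Ricci flow and reduces everything to the Riemann mapping theorem plus the classification of anticonformal involutions of the disk; the paper's route trades that complex-analytic normalization step for the geometric fact that Ricci flow carries the symmetry along for free, at the cost of invoking a heavier analytic tool. Both arguments obtain $\delta\circ\tau=\delta$ the same way, from $\tau$ being simultaneously an isometry of $(\Omega,e^\delta\geuc)$ and of $(\Omega,\geuc)$.
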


\begin{proof} 

We will use a Ricci flow argument to prove this statement.  Let $D'=\{(x,y) \in \R^2: x^2+y^2 \leq 1+\epsilon \}$, and continue to denote by $\tau_0$ the reflection of $D'$ in the $x$-axis.   Here $\epsilon$ is chosen such that the metric $g$ on $D$ extends to a $\tau_0$-invariant Riemannian metric, still denoted $g$, on $D'$. Such an extension is always possible: we first extend $g$ arbitrarily to a Riemannian metric $h$ on $D'$, choosing $\epsilon$ so that $h$ is a valid metric, and then let $g=\frac{1}{2}(h +\tau_0^*h)$.  Double $D'$ across its boundary to obtain a topological sphere $S$ and a metric that is smooth except along the boundary edge $\mathcal{E}$ of $D'$.  We can now view $D\subset D'$ as a domain in $S$. The involution $\tau_0$ of $D'$ and its copy defines an involution $\tau_S$ of $S$.  Smooth out the metric in a neighborhood of $\mathcal{E}$ to obtain a $\tau_S$-invariant Riemannian metric $g_S$ on $S$ that agrees with $g$ on $D$.
 Under normalized Ricci flow, $g_S$ converges to a constant curvature metric $g_{cc}$ invariant under $\tau_S$ (see \cite[p. 105]{CK}).  Ricci flow in two dimensions is conformal, hence $g_S=e^{f_0} g_{cc}$ for some $f_0\in C^\infty(S)$.   Since both $g_S$ and $g_{cc}$ are $\tau_S$-invariant, so is $f_0$.      Thus there exists an isometry $H: (S,e^{f_0}g_{cc})\to (S, e^f g_{\can})$, where $g_{\can}$ is the canonical round metric and $f\in C^\infty(S)$, such that $H$ maps the fixed points of $\tau_S$ onto a great circle $\mathcal{C}$.   Hence R:= $H\circ\tau_S \circ H^{-1}$
is reflection across the great circle $\mathcal{C}$, and $f$ is invariant under $R$.  Perform stereographic projection $\operatorname{Ster}:S\to\R^2$ from a point on $\mathcal{C}$ lying outside of $H(D)$.  Then $\operatorname{Ster}$ carries $\mathcal{C}$ to a line $L$ and intertwines $R$ with reflection across $L$. Since $H(D)$ is invariant under $R$, it is carried to a domain $\Om$ symmetric about $L$.   By the conformality of stereographic projection, $(\operatorname{Ster}^{-1})^*(e^f g_{\can})=e^\delta\geuc$ for some $\delta\in C^\infty(\Om)$ invariant under reflection across $L$.   Define $F=\operatorname{Ster}\circ H$ to complete the proof.
\end{proof}

\begin{remark}\label{gohd} Since every Riemannian metric $g$ on the half-disk orbifold $\ohd=\langle \tau_0\rangle \backslash D$ (see Example~\ref{half_disk_orb}) pulls back to a $\tau_0$-invariant Riemannian metric on the disk $D$, we may interpret Lemma~\ref{lem.D} as saying that $(\ohd,g)$ is isometric to the conformally flat Riemannian orbifold $(\langle \tau_0\rangle \backslash\Omega, e^\delta\geuc)$.   

\end{remark}

\begin{proof}[Proof of Theorem~\ref{thm.asympt}]

\noindent{\bf Step 1.} We prove the theorem for $\orb=\ohd$.   The tools we will use are Lemmas~\ref{lem 2.1} and \ref{lem.D} and a result of Edward \cite[Prop. 1]{Ed}.  Edward proved that the full symbol of the Euclidean Dirichlet-to-Neumann operator on the boundary of a plane domain $\Om$ is given by $\operatorname{Symb}_{\euc} = \|\xi\|_{\euc}$ for $\xi\in T^*(\partial \Om)$.   More generally, if $h$ is a conformally flat metric on $\Om$, say
$h=e^{\delta} \geuc$, then we have $\operatorname{Symb}_h(x, \xi)=\|\xi\|_h$.
To see this, observe that the $h$-harmonic functions are exactly the $g_{\euc}$-harmonic functions, and the outward unit normals satisfy $\nu_h=\frac{1}{e^{\delta/2}}\nu_{\euc}$, implying that the Dirichlet-to-Neumann operators are related by
$\D_h=\frac{1}{e^{\delta/2}}\D_{\euc}.$  Hence $\operatorname{Symb}_h(x, \xi)=\frac{1}{e^{\delta(x)/2}}\|\xi\|_{\euc}=\|\xi\|_h$ as asserted.

We want to show that if $g$ is an arbitrary metric on $\ohd$, say with boundary length $\ell$, then the Steklov spectrum of $(\ohd,g)$ is asymptotic to that of the canonical flat half-disk orbisurface $\ohd(\ell)$.  By Lemma~\ref{lem.D} and Remark~\ref{gohd}, $(\ohd,g)$ is isometric to the quotient of $(\Om, h)$ by an isometric involution $\tau$, where $\Om$ is a simply-connected domain in $\R^2$ and $h$ is a conformally flat metric.  Also, $\ohd(\ell)$ is isometric to the quotient of $D(2\ell)$ by an isometric involution $\mu$.   The boundaries of both $(\Om, h)$ and $D(2\ell)$ are circles of length $2\ell$.  We may identify both boundaries with a fixed circle $S$ via length preserving diffeomorphisms in such a way that the involutions $\tau_{|\partial\Om}$ and $\mu_{|\partial D(2\ell)}$ correspond to the same involution $\rho$ of $S$.   Thus the Dirichlet-to-Neumann operators of each of $(\Om,h)$ and $D(2\ell)$ can be identified with pseudodifferential operators on $C^\infty(S)$ that are invariant under the $\Z_2$ action defined by $\rho$.  By the results quoted in the previous paragraph, the two operators have the same symbol, and thus differ by a smoothing operator.  The conclusion now follows from Lemma~\ref{lem 2.1} with $S$ playing the role of $M$ and $\Z_2$ the role of $G$.

\medskip

\noindent{\bf Step 2.}  The boundary of $\orb$ is diffeomorphic to the boundary of the disjoint union $S$ of $r$ disks and $s$ half-disk orbifolds. We now construct a Riemannian metric $h$ on this disjoint union in such a way that a neighborhood of $\partial S$ in $(S,h)$ is isometric to a neighborhood of $\partial \orb$ in $(\orb,g)$.
Any boundary component of type I has a collar neighborhood that is contained entirely in the set of regular points of $\orb$.   Thus if there are no type II boundary components, then one can proceed exactly as in \cite{GPPS}, capping off collar neighborhoods of each boundary component.  Hence we assume that $\orb$ has at least one boundary component of type II. We begin by doubling $\orb$ across all mirror reflectors to obtain a two-fold cover $\pi: \orbt \to \orb$. Continue to denote by $g$ the pullback to $\orbt$ of the metric $g$ on $\orb$. Note that each boundary component $B_i$ of $\orbt$ is necessarily of type I, and a sufficiently small collar neighborhood $U_i$ of each $B_i$ is contained in the set of regular points.  There exists a diffeomorphism $\varphi_i$ from $U_i$ to an annulus in $\R^2$. For each $B_i$ in $\orbt$ that is a double cover of a type II boundary component in $\orb$, note that $g_i=(\varphi_i^{-1})^*(g)$ is a metric on the annulus that is invariant with respect to the reflection symmetry $\tau_i$ induced from that on $U_i$.  As in the proof of \cite[Thm. 1.4]{GPPS}, we smoothly glue a disk to the non-$B_i$ boundary component of each annulus to obtain a topological disk $D_i$ with boundary $B_i$.  We extend the metric from our annuli to the $D_i$, noting that we may extend the invariant metrics in such a way that they remain reflection-invariant.  Then each $U_i$ is isometric to a small collar neighborhood of its corresponding $D_i$.  Moreover, for each $B_i$ in $\orbt$ that is a double cover of a type II boundary component in $\orb$, the map $\pi \circ \varphi_i^{-1}$ induces an isometry from a collar neighborhood of the boundary of the Riemannian orbisurface $(\langle \tau_i \rangle \backslash D_i, g_i)$ to a collar neighborhood of the corresponding type II boundary component in $\orb$, where we are denoting by $g_i$ the metric on $\langle \tau_i \rangle \backslash D_i$ induced by the metric of the same name on $D_i$. This gives us a metric $h$ on $S$ with the desired property.

 The theorem now follows as in Step 3 explained above.
 \end{proof}
 
\subsection{Steklov spectral invariants}\label{invariants}
\begin{defn}\label{def.seq}\normalfont Given $l\in\R^+$, let $A(\ell)$ be the multiset
$$A(\ell)=\left\{0, \frac{2\pi}{\ell},  \frac{2\pi}{\ell}, \frac{4\pi}{\ell},  \frac{4\pi}{\ell},  \frac{6\pi}{\ell}, \frac{6\pi}{\ell}{,\cdots}\right\}\;$$
that is, $A(\ell)$ consists of $0$ together with two copies of $\frac{2\pi}{\ell}\N$.  Let
$$\ab(\ell)=\{0\}\cup \frac{2\pi}{\ell}\N.$$
Given finite multisets $L=\{\lc_1,\dots, \lc_r\}$ and $\cb=\{\bar{\lc_1},\dots, \bar{\lc_s}\}$ of elements of $\R^+$,  let $\sigma(L;\cb)$ be the monotone non-decreasing sequence consisting of all the elements (repeated with multiplicities) of $A(\lc_1)\sqcup A(\lc_2)\sqcup\dots\sqcup A(\lc_r)\sqcup \ab(2\bar{\lc_1})\sqcup\dots\sqcup \ab(2\bar{\lc_s})$.  Write $\sigma_j(L;\cb)$ for the $j$th element of this sequence.

\end{defn}

By Examples~\ref{spec.ball} and \ref{half_disk_orb}, we see that
$$\Stek(S({\ell}_1,\dots, {\ell}_r;{\bar{\ell_1},\dots, \bar{\ell_s}}))=\sigma(L;\cb).$$

\begin{cor}\label{precise} Under the hypotheses of Theorem~\ref{thm.asympt}, the Steklov eigenvalues of $(\orb,g)$ satisfy
$$\sigma_j(\orb,g)=\sigma_j(L;\cb) +O(j^{-\infty}).$$

\end{cor}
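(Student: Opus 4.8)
The plan is to deduce Corollary~\ref{precise} directly from Theorem~\ref{thm.asympt} together with the explicit computations in Examples~\ref{spec.ball} and \ref{half_disk_orb}; all of the genuine content sits in Theorem~\ref{thm.asympt}, so the only task remaining is a bookkeeping identification of $\Stek(S(\ell_1,\dots,\ell_r;\bar{\ell_1},\dots,\bar{\ell_s}))$ with the sequence $\sigma(L;\cb)$ of Definition~\ref{def.seq}.

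First I would recall that the Dirichlet-to-Neumann operator of a disjoint union of compact Riemannian orbifolds with boundary is the direct sum of the Dirichlet-to-Neumann operators of the components, since a harmonic extension is computed componentwise; hence its spectrum is the multiset union of the component spectra, arranged in monotone non-decreasing order. By Example~\ref{spec.ball}, $\Stek(D(\ell_i)) = \{0\}\sqcup\frac{2\pi}{\ell_i}\N\sqcup\frac{2\pi}{\ell_i}\N = A(\ell_i)$, and by Example~\ref{half_disk_orb}, $\Stek(\ohd(\bar{\ell_k})) = \{0\}\cup\frac{\pi}{\bar{\ell_k}}\N$. The one point that deserves attention is the normalization: $\ab(2\bar{\ell_k}) = \{0\}\cup\frac{2\pi}{2\bar{\ell_k}}\N = \{0\}\cup\frac{\pi}{\bar{\ell_k}}\N$, so $\Stek(\ohd(\bar{\ell_k})) = \ab(2\bar{\ell_k})$, which is exactly the reason Definition~\ref{def.seq} feeds $2\bar{\ell_k}$ rather than $\bar{\ell_k}$ into the half-disk terms. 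Combining, $\Stek(S(\ell_1,\dots,\ell_r;\bar{\ell_1},\dots,\bar{\ell_s})) = \sigma(L;\cb)$ as ordered sequences, i.e.\ $\sigma_j(S(\ell_1,\dots,\ell_r;\bar{\ell_1},\dots,\bar{\ell_s})) = \sigma_j(L;\cb)$ for every $j$.

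Finally I would invoke Theorem~\ref{thm.asympt}, which gives $\Stek(\orb,g)\sim\Stek(S(\ell_1,\dots,\ell_r;\bar{\ell_1},\dots,\bar{\ell_s}))$, that is $\sigma_j(\orb,g) - \sigma_j(S(\ell_1,\dots,\ell_r;\bar{\ell_1},\dots,\bar{\ell_s})) = O(j^{-\infty})$, and combine it with the identity of the previous paragraph to obtain $\sigma_j(\orb,g) - \sigma_j(L;\cb) = O(j^{-\infty})$. There is no real obstacle here: the difficulty has all been discharged inside Theorem~\ref{thm.asympt}, whose Step~1 in turn relies on the uniformization Lemma~\ref{lem.D}, Edward's formula for the symbol of the Euclidean Dirichlet-to-Neumann operator, and the smoothing-perturbation Lemma~\ref{lem 2.1}. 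The only thing to be careful about is the multiplicity and index bookkeeping just described, and the trivial observation that the relation $\sim$ is, by definition, a statement about the ordered eigenvalue sequences, so no further argument is needed to pass from the asymptotic equivalence of spectra to the stated pointwise-in-$j$ estimate.
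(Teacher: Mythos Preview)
Your proposal is correct and matches the paper's approach exactly: the paper observes just before the corollary that Examples~\ref{spec.ball} and \ref{half_disk_orb} give $\Stek(S({\ell}_1,\dots, {\ell}_r;{\bar{\ell_1},\dots, \bar{\ell_s}}))=\sigma(L;\cb)$, and then the corollary follows immediately from Theorem~\ref{thm.asympt}. Your additional remarks on the disjoint-union structure and the $2\bar{\ell_k}$ normalization are accurate and simply make explicit what the paper leaves implicit.
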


Corollary~\ref{precise} gives us precise asymptotics of the Steklov spectrum of a compact Riemannian orbisurface in terms of its boundary data (Definition~\ref{data}). We now investigate the inverse problem and ask how much information about the boundary data of a compact Riemannian orbisurface can be gleaned from its Steklov spectrum.  As Example ~\ref{ell1ell2} shows, we have $\sigma(\{\ell_1\};\{\ell_2/2,\ell_2/2\})=\sigma(\{\ell_{2}\};\{\ell_1/2,\ell_1/2\})$; certain interchanges between different types of boundary components do not affect the asymptotics of the Steklov spectrum.  Thus we begin by encoding this potential Steklov isospectrality as an equivalence relation on the set of possible boundary data.

\begin{defn}\label{defn.equivclass}\normalfont
Let $L_i=\{\ell_1^{(i)},\ldots,\ell_{r_i}^{(i)}\}$ and $\overline{L}_i=\{\bar{\lc}_1^{(i)},\ldots,\bar{\lc}_{s_i}^{(i)}\}$, $i=1,2$, be multisets of positive real numbers. We define an equivalence relation by $(L_1; \overline{L}_1)\equiv(L_2; \overline{L}_2)$ if and only if the following equalities hold:
\begin{itemize}
\item[(i)]$r_1=r_2$;
\item[(ii)] $s_1=s_2$;
\item[(iii)] $L_1\sqcup L_1\sqcup 2\overline{L}_1=L_2\sqcup L_2\sqcup 2\overline{L}_2,$
where $2\overline{L}$ denotes the multiset consisting of each element of $\overline{L}$ multiplied by $2$.  The equality should be understood as equality of multisets (i.e., multiplicities are included).
\end{itemize}
Note that although conditions (i) and (iii) imply condition (ii), we include it for clarity.
\end{defn}

\begin{remark}\label{singleton}\normalfont Observe that the equivalence class of $(L; \overline{L})$ consists of a single element if either of $L$ or $\overline{L}$ is empty, or if all elements of $\overline{L}$ have multiplicity one.
\end{remark}

\begin{lemma}\label{lem.equivtogamma}
In the notation of Definitions \ref{def.seq} and \ref{defn.equivclass}, the following are equivalent:
\begin{itemize}
\item[(a)] $\sigma(L_1; \overline{L}_1)=\sigma(L_2; \overline{L}_2)$;
\item[(b)]  $\sigma_j(L_1; \overline{L}_1)-\sigma_j(L_2; \overline{L}_2) = O(j^{-\infty});$
\item[(c)] $(L_1; \overline{L}_1)\equiv(L_2; \overline{L}_2)$.
\end{itemize}
\end{lemma}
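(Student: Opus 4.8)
The plan is to prove the cycle of implications $(a)\Rightarrow(b)\Rightarrow(c)\Rightarrow(a)$. The implication $(a)\Rightarrow(b)$ is trivial: if the two monotone sequences are literally equal, then their term-by-term difference is identically zero, which is certainly $O(j^{-\infty})$. So the two substantive implications are $(b)\Rightarrow(c)$ and $(c)\Rightarrow(a)$.

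For $(c)\Rightarrow(a)$, first recall from Definition~\ref{def.seq} that $A(\ell)$ is $\{0\}$ together with two copies of $\frac{2\pi}{\ell}\N$, while $\ab(2\bar\ell) = \{0\}\cup\frac{\pi}{\bar\ell}\N = \{0\}\cup\frac{2\pi}{2\bar\ell}\N$. The key bookkeeping observation is that, as a multiset, $A(\ell)\setminus\{0\}$ is exactly two copies of $\frac{2\pi}{\ell}\N$, and similarly $\ab(2\bar\ell)\setminus\{0\}$ is one copy of $\frac{2\pi}{2\bar\ell}\N$, so two copies of $\ab(2\bar\ell)$ (minus zeros) equal two copies of $\frac{2\pi}{2\bar\ell}\N$, which is precisely $A(2\bar\ell)\setminus\{0\}$. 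Hence, up to the count of zeros, the multiset $\bigsqcup_i A(\ell_i)\sqcup\bigsqcup_j \ab(2\bar\ell_j)$ depends only on the multiset $\{\ell_1,\dots,\ell_r\}\sqcup\{2\bar\ell_1,\dots,2\bar\ell_s\}$ of ``effective circumferences,'' each repeated once — equivalently on $L\sqcup L\sqcup 2\cb$ after accounting for the doubling inside $A$. The number of zeros in $\sigma(L;\cb)$ is $2r+s$ (two zeros from each $A(\ell_i)$, one from each $\ab(2\bar\ell_j)$; here I am using the monotone-sequence convention so that repeated zeros are retained). Thus condition (iii), together with (i) and (ii), forces $\sigma(L_1;\overline L_1)$ and $\sigma(L_2;\overline L_2)$ to have the same non-zero part and the same number of zeros, hence to be equal as monotone sequences. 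This gives $(c)\Rightarrow(a)$.

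For $(b)\Rightarrow(c)$ I would argue that an asymptotic agreement $\sigma_j(L_1;\overline L_1)-\sigma_j(L_2;\overline L_2)=O(j^{-\infty})$ already forces exact equality of the sequences, and then invoke $(a)\Rightarrow(c)$ (the converse of the implication just proved, run backwards). Concretely: each sequence $\sigma(L;\cb)$ is a union of finitely many arithmetic progressions through $0$ with rational-in-$\pi$ common differences $\tfrac{2\pi}{\ell_i}$ and $\tfrac{\pi}{\bar\ell_j}$; such a sequence has a well-defined asymptotic counting function $N(t)\sim ct$ with $c=\tfrac{1}{\pi}\big(\sum_i \ell_i + \tfrac12\sum_j \bar\ell_j\big)$, so (b) first yields $\sum_i\ell_i+\tfrac12\sum_j\bar\ell_j$ is the same for both data sets. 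More is true: if two such ``multi-arithmetic'' sequences differ by $O(j^{-\infty})$ but are not identical, then at the first index $j_0$ where they differ the gap is bounded below by a fixed positive constant (the minimal spacing between distinct values occurring in either sequence), contradicting the $O(j^{-\infty})$ decay once one checks the differing values persist — here one must be slightly careful because a single mismatch at $j_0$ could in principle be ``absorbed'' later, but since both sequences are eventually exactly periodic modulo a common period $P$ (a common multiple of all the $\ell_i,\bar\ell_j$ up to the $\pi$ factor, assuming commensurability; in the incommensurable case one uses instead that the difference of counting functions is bounded), a mismatch propagates and the term-by-term difference cannot tend to $0$. Therefore (b) implies $\sigma(L_1;\overline L_1)=\sigma(L_2;\overline L_2)$, i.e.\ (a), and then the argument of the previous paragraph run in reverse — reading off $2r_i+s_i$ as the number of zeros, reading off $r_i$ from the multiplicity structure of the primitive generators (type II progressions $\tfrac{\pi}{\bar\ell}\N$ contribute asymmetrically compared to the doubled type I progressions), and reading off $L\sqcup L\sqcup 2\cb$ from the non-zero part — recovers (c).

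I expect the main obstacle to be the bookkeeping in $(b)\Rightarrow(c)$: isolating exactly which features of the boundary data are visible in the sequence. In particular one must verify that the number of type I components $r$ (equivalently $s$, given $r$ and the total) is genuinely recoverable and not confounded by the interchange of Example~\ref{ell1ell2}. The clean way to see this is that a type II component of length $\bar\ell$ contributes the single progression $\tfrac{\pi}{\bar\ell}\N$ — a progression of ``multiplicity one'' — whereas a type I component of length $\ell$ contributes $\tfrac{2\pi}{\ell}\N$ with ``multiplicity two''; counting, across all of $\bigsqcup A(\ell_i)\sqcup\bigsqcup\ab(2\bar\ell_j)$, how many progression-values have odd versus even total multiplicity (being careful about coincidences where a type I progression happens to align with a type II one) recovers $s$, hence $r$. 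This parity argument, which is implicit in Remark~\ref{singleton} and Example~\ref{ell1ell2}, is the crux, and I would organize the write-up around making it precise, handling the commensurable and incommensurable cases uniformly by working with the difference of counting functions rather than with individual terms.
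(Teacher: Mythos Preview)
Your overall cycle $(a)\Rightarrow(b)\Rightarrow(c)\Rightarrow(a)$ is the same as the paper's, and your $(c)\Rightarrow(a)$ is fine in spirit. But there is a concrete bookkeeping error that propagates into a genuine gap in $(b)\Rightarrow(c)$. Each multiset $A(\ell)$ contains \emph{one} zero, not two (look at Definition~\ref{def.seq}: $A(\ell)=\{0,\frac{2\pi}{\ell},\frac{2\pi}{\ell},\dots\}$, which matches the single zero Steklov eigenvalue of a disk). Hence the number of zeros in $\sigma(L;\cb)$ is $r+s$, not $2r+s$. This matters: the way one actually separates $r$ from $s$ is by combining the zero count $r+s$ with the total number of arithmetic progressions $2r+s$, which gives two independent linear relations. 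With your incorrect count $2r+s$ for the zeros, you get only one relation, and you are forced into the ``parity of progression multiplicities'' argument, which does not work.

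Indeed, Example~\ref{ell1ell2} shows exactly why parity fails: one type~I component of length $2\ell$ contributes the progression $\frac{\pi}{\ell}\N$ with multiplicity two, and so do \emph{two} type~II components each of length $\ell$. So in the nonzero part of the sequence these are indistinguishable, and no amount of parity accounting on the progressions alone can recover $s$. What does distinguish them is precisely that the first contributes one zero while the second contributes two. Your sketch of $(b)\Rightarrow(a)$ in the incommensurable case is also not an argument as written; the paper avoids this by invoking the close almost-bijection result of \cite{GPPS} (their Remark~2.10) to obtain condition~(iii) directly from~(b), and then uses~(b) a second time via a shift argument (if the zero counts differed by $k$, then $\sigma_{j+k}-\sigma_j\to 0$, contradicting the fact that $\limsup_j(\sigma_{j+1}-\sigma_j)>0$) to force $r_1+s_1=r_2+s_2$. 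Fix the zero count and organize the argument around these two numbers, $r+s$ and $2r+s$, rather than around parity.
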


\begin{proof} The implication (a)$\implies$(b) is trivial, and (c)$\implies$(a) is an easy consequence of Definitions~ \ref{def.seq} and \ref{defn.equivclass}.  It remains to prove (b)$\implies$(c).

We first show that (b) implies condition (iii) in Definition~\ref{defn.equivclass}.  Each of the sequences $\sigma(L_i; \overline{L}_i)$, $i=1,2$, can also be viewed as a multiset by ignoring the ordering of the elements while retaining their multiplicities.  Both multisets consist of finitely many zeroes together with a disjoint union of arithmetic progressions of the form $\alpha\N$ with $\alpha>0$.  Condition (iii) of Definition~\ref{defn.equivclass} says precisely that the same arithmetic progressions occur in both multisets and with the same multiplicities.   We will apply a result of A.~Girouard, L.~Parnovski, I.~Polterovich and D.~Sher \cite{GPPS} for comparing multisets that are unions of arithmetic progressions.   We first recall some definitions. Let $A$ and $B$ be two multisets of positive real numbers. A map $\Phi:A\to B$ is \textit{close} if for every $\epsilon> 0$, there are only finitely many $x \in A$ with
$|\Phi(x) - x|\ge\epsilon$. A map $\Phi:A\to B$ is  an \textit{almost-bijection} if for all but finitely many $y\in B$ the pre-image $\Phi^{-1}(y)$ consists of one point.

Condition (b) implies the existence of a close almost-bijection between the multisets $\sigma(L_1; \overline{L}_1)$ and $\sigma(L_2; \overline{L}_2)$.  We may then apply \cite[Rem. 2.10]{GPPS} to conclude that the same arithmetic progressions occur and with the same multiplicities.  Thus condition (iii) of Definition~\ref{defn.equivclass} holds.

It remains to verify conditions (i) and (ii) of Definition~\ref{defn.equivclass}.  Condition (iii), which we have already verified, tells us that $2r_1+s_1 = 2r_2 + s_2$ and that the two multisets $\sigma(L_1; \overline{L}_1)$ and $\sigma(L_2; \overline{L}_2)$ are identical except possibly for the number of zeros ($r_1+s_1$ versus $r_2+s_2$).  Suppose they are not equal, e.g., suppose that $\sigma(L_1; \overline{L}_1)$ has $k$ more zeros than $\sigma(L_2; \overline{L}_2)$.  Then by the hypothesis (b), we get
$$\sigma_{j+k}(L_1; \overline{L}_1)=\sigma_j(L_2; \overline{L}_2)=\sigma_j(L_1; \overline{L}_1)+O(j^{-\infty}).$$
Thus we have
$$\sigma_{j+1}(L_1; \overline{L}_1)-\sigma_j(L_1; \overline{L}_1)\leq \sigma_{j+k}(L_1; \overline{L}_1)-\sigma_j(L_1; \overline{L}_1)=O(j^{-\infty}).$$
This is a contradiction by using the fact that
$\lim \sup\ \sigma_{j+1}(L_1;\overline{L}_1)-\sigma_j(L_1;\overline{L}_1)=\frac{2\pi}{\ell}$ where $\ell$ is the largest element of
$L_1\sqcup2\overline{L}_1$; the proof of this fact is given in the proof of \cite[Thm. 1.7]{GPPS}. Hence, $r_1+s_1 = r_2+s_2$, and since $2r_1+s_1 = 2r_2 + s_2$, we see that conditions (i) and (ii) of Definition~\ref{defn.equivclass} hold. Thus $(L_1; \overline{L}_1)\equiv(L_2; \overline{L}_2)$.
\end{proof}

For a compact Riemannian orbisurface $(\orb, g)$, we know from Theorem~\ref{thm.asympt} that the Steklov spectrum of $\orb$ is asymptotic to $\sigma(L; \overline{L})$ for some multisets of positive real numbers $L$ and $\overline{L}$. Lemma~\ref{lem.equivtogamma} implies that $(L; \overline{L})$ is unique up to equivalence, proving the following theorem.

\begin{thm}\label{h-audiblebd}
In the language of Definition~\ref{data}, the Steklov spectrum of a compact Riemannian orbisurface $(\orb, g)$ determines the equivalence class of the boundary data $(L;\cb)$.
\end{thm}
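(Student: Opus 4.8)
The plan is to obtain Theorem~\ref{h-audiblebd} as a formal consequence of Theorem~\ref{thm.asympt} together with the chain of equivalences in Lemma~\ref{lem.equivtogamma}; the genuinely substantial work will already have been done by the time we reach this statement.

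First I would record Theorem~\ref{thm.asympt} in the convenient form: if $(\orb,g)$ is a compact Riemannian orbisurface with boundary data $(L;\cb)$, then $\Stek(\orb,g)\sim\sigma(L;\cb)$, where for sequences $A=\{a_j\}$ and $B=\{b_j\}$ we write $A\sim B$ when $a_j-b_j=O(j^{-\infty})$. This is immediate, since $\Stek(S(\ell_1,\dots,\ell_r;\bar{\ell_1},\dots,\bar{\ell_s}))=\sigma(L;\cb)$ by Examples~\ref{spec.ball} and \ref{half_disk_orb}. Next I would take two compact Riemannian orbisurfaces $(\orb_1,g_1)$ and $(\orb_2,g_2)$ with identical Steklov spectra and let $(L_i;\overline{L}_i)$ be the boundary data of $(\orb_i,g_i)$. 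Since
$$\sigma(L_1;\overline{L}_1)\sim\Stek(\orb_1,g_1)=\Stek(\orb_2,g_2)\sim\sigma(L_2;\overline{L}_2),$$
transitivity of $\sim$ gives $\sigma_j(L_1;\overline{L}_1)-\sigma_j(L_2;\overline{L}_2)=O(j^{-\infty})$, which is exactly condition (b) of Lemma~\ref{lem.equivtogamma}. Applying the implication (b)$\Rightarrow$(c) of that lemma yields $(L_1;\overline{L}_1)\equiv(L_2;\overline{L}_2)$; that is, the Steklov spectrum determines the boundary data up to the equivalence relation of Definition~\ref{defn.equivclass}. I would close by remarking, via Example~\ref{ell1ell2} and the implication (c)$\Rightarrow$(a), that this equivalence relation is precisely the indeterminacy, so the result cannot be improved.

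The hard part is already behind us by the time we state this theorem: the two substantial ingredients are Theorem~\ref{thm.asympt} (resting on the uniformization statement Lemma~\ref{lem.D}, the conformal invariance of harmonicity in dimension two, Edward's symbol computation, and the pseudodifferential machinery of Proposition~\ref{spectrumislocal} and Lemma~\ref{lem 2.1}) and the combinatorial rigidity of disjoint unions of arithmetic progressions underlying (b)$\Rightarrow$(c) of Lemma~\ref{lem.equivtogamma} (the close-almost-bijection argument of \cite{GPPS} together with the spectral-gap estimate there). Granting these, the only point in the present deduction worth a line of care is why conditions (i) and (ii) of Definition~\ref{defn.equivclass} must hold: condition (iii) already forces $2r_1+s_1=2r_2+s_2$ from the matching progressions, and comparing the multiplicity of the eigenvalue $0$, which equals $r_i+s_i$, then separates $r$ from $s$. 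This bookkeeping is itself carried out inside the proof of Lemma~\ref{lem.equivtogamma}, so the present argument introduces nothing new.
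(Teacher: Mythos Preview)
Your proposal is correct and follows essentially the same route as the paper: deduce $\Stek(\orb,g)\sim\sigma(L;\cb)$ from Theorem~\ref{thm.asympt}, then invoke the implication (b)$\Rightarrow$(c) of Lemma~\ref{lem.equivtogamma} to conclude that the equivalence class of $(L;\cb)$ is determined. The paper's own argument is a two-sentence version of exactly this; your additional remarks on sharpness and on where the real work lies are accurate but supplementary.
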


Theorem \ref{h-audiblebd} yields two corollaries.

\begin{cor}\label{cor.SpecDeterminesNumber}
Let $(\orb, g)$ be a compact Riemannian orbisurface.  The Steklov spectrum uniquely determines the number of type I and type II boundary components, respectively.  In particular, the Steklov spectrum detects the presence of singular points on the boundary of $\orb$, as well as the number of singular points.
\end{cor}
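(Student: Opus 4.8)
The plan is to deduce this directly from Theorem~\ref{h-audiblebd} together with the defining conditions of the equivalence relation in Definition~\ref{defn.equivclass}. First I would invoke Theorem~\ref{h-audiblebd} to conclude that the Steklov spectrum of $(\orb,g)$ determines the equivalence class of its boundary data $(L;\cb)$. Then I would observe that conditions (i) and (ii) of Definition~\ref{defn.equivclass} say precisely that any two representatives of a single equivalence class share the same cardinality $r=|L|$ of the multiset of type I boundary lengths and the same cardinality $s=|\cb|$ of the multiset of type II boundary lengths. Hence the integers $r$ and $s$ are invariants of the equivalence class, and therefore are determined by the Steklov spectrum alone.

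For the statement about singular points, I would recall from Example~\ref{exa:dim1ofds} and Remark~\ref{typeI_II} that a type I boundary component, being a circle, contains no singular points, whereas a type II boundary component is a closed one-dimensional orbifold obtained as the quotient of a circle by a reflection and so contains exactly two reflector points. Consequently the total number of singular points lying on $\partial\orb$ equals $2s$. Since the spectrum determines $s$, it determines $2s$; in particular $\partial\orb$ contains singular points if and only if $s\geq 1$, a condition which is thereby detected by the spectrum.

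There is essentially no obstacle here beyond those already overcome in establishing Theorem~\ref{h-audiblebd}: the only point worth emphasizing is that the equivalence relation of Definition~\ref{defn.equivclass} was set up precisely so as never to trade type I components for type II components, even though condition (iii) does allow the lengths themselves to be reshuffled (compare Example~\ref{ell1ell2}); enforcing this non-mixing is exactly the role of conditions (i) and (ii). The entire content of the corollary is thus packaged into Theorem~\ref{h-audiblebd}, and what remains is only the elementary identification of $2s$ with the number of boundary singular points.
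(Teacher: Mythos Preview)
Your proposal is correct and matches the paper's intent. The paper does not give an explicit proof of this corollary; it simply remarks that Theorem~\ref{h-audiblebd} yields it, and your argument spells out precisely why: conditions (i) and (ii) of Definition~\ref{defn.equivclass} make $r$ and $s$ invariants of the equivalence class, and each type II boundary component contributes exactly two singular points (as noted in the introduction and in Example~\ref{exa:dim1ofds}).
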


\begin{cor}\label{cor.boundary}
Let $(\orb, g)$ and $(\orb',g')$ be Steklov isospectral compact Riemannian orbisurfaces with boundary.   If either
\begin{itemize}
\item[(a)] all boundary components of $\orb$ have the same type,
\item[(b)] or if all boundary components of $(\orb,g)$ of type II have different lengths,
\end{itemize}
then $(\orb,g)$ and $(\orb',g')$ must have the same boundary data $(L;\cb)$. Thus, for generic compact Riemannian orbisurfaces with boundary, the Steklov spectrum determines the boundary data.

\end{cor}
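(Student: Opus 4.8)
The plan is to obtain Corollary~\ref{cor.boundary} as an immediate consequence of Theorem~\ref{h-audiblebd} together with Remark~\ref{singleton}. Since $(\orb,g)$ and $(\orb',g')$ are Steklov isospectral, Theorem~\ref{h-audiblebd} forces their boundary data $(L;\cb)$ and $(L';\cb')$ to lie in the same equivalence class of Definition~\ref{defn.equivclass}. Hence it suffices to show that, in each of the cases (a) and (b), the equivalence class of $(L;\cb)$ is a singleton; then $(L';\cb')=(L;\cb)$ and the two orbisurfaces have the same boundary data.

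So I would argue as follows. Suppose $(L;\cb)\equiv(L';\cb')$, i.e., conditions (i)--(iii) of Definition~\ref{defn.equivclass} hold. In case (a) one of $L$, $\cb$ is empty. If $\cb=\emptyset$, then condition (ii) gives $\cb'=\emptyset$ as well, and (iii) reduces to $L\sqcup L=L'\sqcup L'$, which forces $L=L'$ since the multiplicity of each element in $L\sqcup L$ is exactly twice its multiplicity in $L$; the case $L=\emptyset$ is symmetric, using $2\cb=2\cb'\Rightarrow\cb=\cb'$. In case (b) every element of $\cb$ has multiplicity one, so for each $x$ the multiplicity of $x$ in the multiset $M:=L\sqcup L\sqcup 2\cb$ equals $2\,m_L(x)+m_{\cb}(x/2)$, hence is odd precisely when $x/2\in\cb$; thus from $M$ (which by (iii) equals $L'\sqcup L'\sqcup 2\cb'$) one reads off the set $\{x/2:\ x\text{ has odd multiplicity in }M\}$, a set of cardinality $s$. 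Combining this with condition (ii) ($s'=s$), one checks that $\cb'$ must coincide with this same set with all multiplicities one, so $\cb'=\cb$, and then (iii) yields $L'=L$. This is exactly the content of Remark~\ref{singleton}.

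For the final sentence of the corollary, I would argue genericity as follows: the lengths of the finitely many type II boundary components of a compact Riemannian orbisurface depend continuously on the metric, and the condition that they be pairwise distinct fails only on a closed subset with empty interior of the space of metrics; moreover it can always be achieved by an arbitrarily small perturbation of the metric supported near $\partial\orb$, which affects nothing else in the statement. Hence a generic $(\orb,g)$ satisfies hypothesis (b), so any orbisurface Steklov isospectral to it shares its boundary data, and the Steklov spectrum of $(\orb,g)$ therefore determines $(L;\cb)$.

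I do not expect a real obstacle here: once Theorem~\ref{h-audiblebd} is in hand, the corollary is essentially formal. The one point needing genuine care is the multiplicity-one case of Remark~\ref{singleton} — one must rule out the possibility that $\cb'$ carries additional even-multiplicity elements that are ``absorbed'' into the doubled block $L'\sqcup L'$, and it is precisely the cardinality constraint $s=s'$ coming from condition (ii) that closes this gap. Everything else is elementary bookkeeping with multisets.
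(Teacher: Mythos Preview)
Your approach is correct and is exactly the one taken in the paper: the corollary is deduced immediately from Theorem~\ref{h-audiblebd} together with Remark~\ref{singleton}. You supply more detail than the paper does---in particular a careful verification of Remark~\ref{singleton} in the multiplicity-one case, where the cardinality constraint $s=s'$ from condition~(ii) is indeed what prevents $\cb'$ from acquiring extra even-multiplicity elements---but the argument is the intended one.
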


\begin{proof} The corollary is immediate from Theorem~\ref{h-audiblebd} and Remark~\ref{singleton}.
\end{proof}

\section{Examples of Steklov Isospectral Orbifolds}\label{sec:examples}

In this section we discuss constructions of Steklov isospectral orbifolds.  

There is a large literature on constructions of \emph{Laplace} isospectral compact Riemannian manifolds and orbifolds, with or without boundary.    The known techniques for constructing examples essentially fall into three types:

\begin{itemize}
\item representation theoretic methods such as Sunada's Theorem \cite{sun} and its generalizations (see the survey \cite{Gsurvey});

\item the method of torus actions (see, for example, \cite{G2001,GSz,GW,Schueth01}); and

\item methods specific to special Riemannian manifolds such as flat closed manifolds (e.g., \cite{DMM92,MR09}) or Lens spaces (e.g., \cite{Ike80,LMR,sh11}) in which the spectrum can be ``explicitly'' computed, e.g., through the use of a generating function.
\end{itemize}

In \cite{GHW}, P. Herbrich, D. Webb and the third author showed that both the original Sunada technique and the torus action method, when applied to compact Riemannian manifolds with boundary, result in manifolds that are Steklov isospectral as well as Laplace isospectral.   These methods are valid for orbifolds as well as manifolds; in particular, the many examples of Laplace isospectral manifolds and orbifolds in the literature constructed by these methods are also Steklov isospectral.   We will not repeat these examples here and will instead focus on the following:
\footnote{We mention here one result of \cite{GHW} that uses orbifolds in a crucial way.  The Laplace isospectrality of planar domains such as those constructed in \cite{GWW} does not follow immediately from Sunada's Theorem. Instead, Sunada's Theorem yields pairs of orbifolds whose underlying spaces are plane domains, and an argument specific to the Laplace spectrum shows that the underlying plane domains are Laplace isospectral. However, as shown in  \cite{GHW}, one can conclude Steklov isospectrality of the orbifolds, but not of the underlying plane domains.  See \cite{GHW} for more details.}

\begin{itemize}
\item ad hoc methods for constructing Steklov isospectral orbifold quotients of Euclidean balls; 

\item examples illustrating the failure in dimensions $n>2$ of the Steklov spectrum to detect how many of the boundary components of an orbifold contain singularities, in contrast to Corollary~\ref{cor.SpecDeterminesNumber};

\item a construction of families of Steklov isospectral bad orbifolds, which we obtain by adapting a construction of M. Weilandt \cite{Weil}.   (Weilandt constructed Laplace isospectral bad closed orbifolds using the torus action method.)
\end{itemize}

\subsection{Orbifold quotients of Euclidean balls}\label{quo_balls}

We have already computed the Steklov spectrum of a Euclidean ball $B(0,R)$ in Example~\ref{spec.ball}.  If $\Gamma$ is a finite subgroup of $O(n)$, then the eigenfunctions of the quotient orbifold $\orb=\Gamma\bs B(0,R)$ pull back to the $\Gamma$-invariant homogeneous harmonic polynomials on $B(0,R)$.

\begin{ex}\label{spherical} Given finite subgroups $\Gamma_1$ and $\Gamma_2$ of $O(n)$, the quotients $\orb_1=\Gamma_1\bs B(0,R)$ and $\orb_2=\Gamma_2\bs B(0,R)$, with radius $R$ arbitrary but fixed, are Steklov isospectral if and only if the spherical space forms $\Gamma_1\bs S^{n-1}$ and $\Gamma_2\bs S^{n-1}$ are Laplace isospectral.   Indeed, letting $d_{\Gamma_i}(m)$, i=1,2, denote the dimension of the space of $\Gamma_i$-invariant homogeneous harmonic polynomials of degree $m$, Example~\ref{spec.ball} shows that $\orb_1$ and $\orb_2$ are Steklov isospectral if and only if $d_{\Gamma_1}(m)=d_{\Gamma_2}(m)$ for all $m$, which is exactly the condition for the corresponding spherical space forms to be isospectral.   We note that there is a large literature on Laplace isospectral spherical space forms. Here we are using the term \emph{spherical space form} to denote any quotient of the unit sphere $S^{n-1}$ by a finite subgroup of $O(n)$.  Thus the class of spherical space forms includes many orbifolds as well as manifolds. 

\end{ex}

As already discussed in Section~\ref{sec:invtomog}, we obtain a more interesting example in dimension two by considering quotients of disks of different radii by cyclic groups.   There the quotients had not only the same Steklov spectrum but also the same Dirichlet-to-Neumann maps.

\subsection{Isospectral orbifolds with multiple components}

O. Parzanchevski \cite{Parz} generalized Sunada's technique to address Laplace isospectrality of manifolds and orbifolds with multiple components.    The following proposition asserts that his technique, when applied to manifolds or orbifolds with boundary, results in Steklov isospectrality as well.

\begin{prop}\label{parzprop}
Let $G$ be a finite group and $\{H_1,\dots,H_r\}$ and $\{K_1,\dots,K_r\}$ be two collections of subgroups of $G$. Suppose for each $x$ in $G$, 
\begin{equation}\label{eqn.sun}\sum_{i=1}^{r} \frac{|[x]\cap H_i|}{|H_i|} = \sum_{i=1}^{r} \frac{|[x]\cap K_i|}{|K_i|}{,} \end{equation} 
where $[x]$ denotes the conjugacy class of $x$ in $G$. Let $(M,g)$ be a compact Riemannian manifold (or orbifold) with boundary such that $G$ acts isometrically on $M$. Then $$Stek \left(\sqcup_{i=1}^r H_i \backslash M \right)=Stek \left(\sqcup_{i=1}^r K_i \backslash M \right).$$
\end{prop}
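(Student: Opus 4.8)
The plan is to reduce Proposition~\ref{parzprop} to Parzanchevski's representation-theoretic machinery exactly as in the Laplace case, and then to upgrade Laplace isospectrality of the associated frame-bundle quotients to Steklov isospectrality by invoking the frame-bundle picture established in \S\ref{sec:psido}. First I would observe that the Steklov spectrum of an orbifold $\mathcal{O}$ with boundary is, by Corollary after Proposition~\ref{D2Ns_commute}, the spectrum of the elliptic self-adjoint pseudodifferential operator $\D_{(\mathcal{O},g)}$, and by Proposition~\ref{D2Ns_commute} this operator is the restriction of $\D_{\F\mathcal{O}}$ to $O(n)$-invariant functions on $\partial(\F\mathcal{O})$. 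So it suffices to work with the \emph{smooth manifold} $\F M$ (the orthonormal frame bundle of $M$ with the submersion metric of Remark~\ref{metric_onf}), on which $G\times O(n)$ acts isometrically: $G$ permutes frames via differentials of its isometries, $O(n)$ acts on the right by change of frame, and the two actions commute. For a subgroup $H\le G$, one has $H\backslash M$ with $\F(H\backslash M) = H\backslash \F M$, and the Steklov eigenvalues of $H\backslash M$ are the eigenvalues of $\D_{\F M}$ acting on functions on $\partial(\F M)$ that are invariant under $H\times O(n)$.

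The key step is then a standard Sunada-type multiplicity comparison. For each Steklov eigenvalue $\sigma$ of $\F M$, let $V_\sigma \subset C^\infty(\partial(\F M))$ be the (finite-dimensional) eigenspace of $\D_{\F M}$. Because $\D_{\F M}$ commutes with the isometric $G\times O(n)$ action, $V_\sigma$ is a $G\times O(n)$-representation. The multiplicity of $\sigma$ in $\Stek(\sqcup_{i=1}^r H_i\backslash M)$ equals $\sum_{i=1}^r \dim V_\sigma^{H_i\times O(n)}$, and similarly with $K_i$. Writing $P = \frac{1}{|O(n)|}\int_{O(n)}\cdots$, no: rather, decompose $V_\sigma$ as an $O(n)$-module and pass to the $O(n)$-fixed subspace $W_\sigma := V_\sigma^{O(n)}$, which is a $G$-module since the actions commute; then $\dim V_\sigma^{H_i\times O(n)} = \dim W_\sigma^{H_i}$. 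Now $\dim W_\sigma^{H_i} = \langle \mathrm{Ind}_{H_i}^G \mathbf{1}, \chi_{W_\sigma}\rangle_G$, which by Frobenius reciprocity equals $\langle \mathbf{1}_{H_i}, \mathrm{Res}_{H_i}\chi_{W_\sigma}\rangle = \frac{1}{|H_i|}\sum_{h\in H_i}\chi_{W_\sigma}(h)$. Summing over $i$ and grouping by conjugacy classes, $\sum_i \dim W_\sigma^{H_i} = \sum_{[x]} \chi_{W_\sigma}(x)\,\sum_i \frac{|[x]\cap H_i|}{|H_i|}$. Hypothesis \eqref{eqn.sun} says the coefficient $\sum_i \frac{|[x]\cap H_i|}{|H_i|}$ matches the one for the $K_i$ for every class $[x]$, hence $\sum_i \dim W_\sigma^{H_i} = \sum_i \dim W_\sigma^{K_i}$ for every $\sigma$. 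Since each $H_i\backslash M$ is a compact Riemannian orbifold with boundary and its Steklov spectrum is discrete with finite multiplicities (Proposition~\ref{spec_thm}), equality of all multiplicities gives $\Stek(\sqcup H_i\backslash M) = \Stek(\sqcup K_i\backslash M)$.

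I would be slightly careful on two technical points. The first is that one should confirm $G$ acts by \emph{isometries} of the submersion metric on $\F M$ constructed from a $G$-invariant metric on $M$ (choose the metric on $M$ to be $G$-invariant first — always possible by averaging — then the frame-bundle metric of Remark~\ref{metric_onf} is automatically $G$-invariant since $G$ acts via differentials, which are bundle isometries), and that $G$ commutes with the $O(n)$ right action; both are immediate from the naturality of the frame bundle. The second is the passage between "$(H_i\times O(n))$-invariant Steklov data on $\partial(\F M)$" and "Steklov data of $H_i\backslash M$": this is exactly Proposition~\ref{D2Ns_commute} applied to the orbifold $H_i\backslash M$, using $\F(H_i\backslash M)=H_i\backslash\F M$ and the fact that the induced metric is still a submersion metric restricting to the volume-one bi-invariant metric on fibers. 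Given these, the argument is routine. The main obstacle, such as it is, is purely bookkeeping: keeping the two commuting group actions ($G$ on frames, $O(n)$ on frames) straight and verifying that taking $O(n)$-invariants commutes with taking eigenspaces and with restriction to $H_i$-invariants — all of which follow from the commutativity of the actions and finite-dimensionality of eigenspaces, so there is no real difficulty, only care. In fact, since Parzanchevski's original theorem is stated for the Laplacian and the proof uses only that the relevant operator is an invariant self-adjoint operator with discrete spectrum and finite multiplicities, the cleanest writeup is simply to say that the proof of \cite[Thm. on multicomponent Sunada]{Parz} applies verbatim with the Laplacian replaced by $\D_{\F M}$, invoking Proposition~\ref{D2Ns_commute} to translate back to the orbifold quotients.
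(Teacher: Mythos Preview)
Your argument is correct but takes a genuinely different route from the paper's. You lift everything to the smooth manifold $\F M$ and compare, eigenspace by eigenspace, the dimensions of the $(H_i\times O(n))$-fixed and $(K_i\times O(n))$-fixed subspaces of each Steklov eigenspace of $\D_{\F M}$ via Frobenius reciprocity and the character identity encoded in \eqref{eqn.sun}. The paper instead stays at the orbifold level and runs Parzanchevski's \emph{transplantation} argument: condition \eqref{eqn.sun} is equivalent to a $G$-module isomorphism $\C[\sqcup_i H_i\backslash G]\cong\C[\sqcup_i K_i\backslash G]$, which yields an explicit linear map $\tau: L^2(\orb_H)\to L^2(\orb_K)$ together with a companion boundary map $\tau_\partial$; one then checks directly that $\tau$ intertwines Laplacians (hence carries harmonic extensions to harmonic extensions), that $\tau$ and $\tau_\partial$ commute with restriction to the boundary, and that they intertwine outward normal derivatives, so $\tau_\partial$ actually conjugates $\D_{\orb_H}$ to $\D_{\orb_K}$. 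Your approach has the virtue of reducing the orbifold problem to a single manifold and a clean character computation, at the cost of the frame-bundle detour and the bookkeeping you flag; the paper's approach is more constructive---it produces an explicit intertwining of the two Dirichlet-to-Neumann operators rather than merely equality of their spectra---and avoids the frame-bundle machinery entirely.
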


Note that because $G$ is not required to act freely on $M$, the quotients $H_i \backslash M$ and $K_i \backslash M$ may be orbifolds even if $M$ is a smooth manifold.  When $r=1$, the proposition reduces to Sunada's Theorem \cite{sun}.

\begin{proof}
The proof is essentially the same as that of Parzanchevski's result.   Let $X=\cup_{i=1}^r\,H_i\bs G$ and $Y=\cup_{i=1}^r\,K_i\bs G$, and let $\C[X]$ and $\C[Y]$ denote the vector spaces consisting of all formal linear combinations of elements of  $X$ and $Y$, respectively.  Write $\orb_H= \sqcup_{i=1}^r H_i \backslash M$ and $\orb_K=\sqcup_{i=1}^r K_i \backslash M$.  The action of $G$ by translation on the various coset spaces gives rise to a linear action of $G$ on each of $\C[X]$ and $\C[Y]$, permuting the basis elements.   Parzanchevski first shows that Equation~(\ref{eqn.sun}) holds if and only if the representations of $G$ on $\C[X]$ and $\C[Y]$ are linearly equivalent.   Generalizing the so-called transplantation proof of Sunada's Theorem, he then shows that this linear equivalence gives rise to an explicit ``transplantation'' map $\tau: L^2(\orb_H)\to L^2(\orb_K)$.  The transplantation carries smooth functions to smooth functions and intertwines the Laplacians. In particular, it carries harmonic functions to harmonic functions. Since $G$ also acts isometrically on the boundary of $M$, the linear equivalence of $\C[X]$ and $\C[Y]$ further gives rise to a transplantation map  $\tau_\partial: C^\infty(\partial \orb_H)\to C^\infty(\partial \orb_K)$.  The following diagram commutes, where the downward arrows are the restriction maps, i.e., $r_H(f)=f_{|\partial \orb_H}$ and similarly for $r_K$.

\[
\begin{array}{ccc}
C^\infty(\orb_H) & \xrightarrow{\tau} & C^\infty(\orb_K)\\
\downarrow{r_H} & & \downarrow{r_K}\\
C^\infty(\partial \orb_H) & \xrightarrow{\tau_\partial} & C^\infty(\partial \orb_K).
\end{array}
\] 
 Finally, the transplantations intertwine the normal derivatives across the boundaries: $\tau_\partial (\partial_\nu f)=\partial_\nu(\tau(f))$ for $f\in C^\infty(\orb_H)$.   
 
 It follows that $\tau_\partial$ intertwines the Dirichlet-to-Neumann maps: $\tau_\partial\circ\mathcal{D}_{\orb_K}=\mathcal{D}_{\orb_H}\circ\tau_\partial$.   The proposition follows.
\end{proof}

\begin{ex}\label{ex.quot1} Let $G=\{1,\sigma,\tau,\sigma\tau\}$ be the Klein 4-group, with the two collections of subgroups $H_1=\{1,\sigma\},\,H_2=\{1,\tau\},\,H_3=\{1,\sigma\tau\}$, and $K_1=\{1\},\,K_2=G,\,K_3=G$.   Note that these subgroups satisfy Equation~(\ref{eqn.sun}).  (The group $G$ and these collections of subgroups were also used in \cite{Parz}.)

 Define an action of $G$ on the Euclidean 3-ball $M:=B(0,1)\subset \R^3$ by letting $\sigma$, $\tau$ and $\sigma\tau$ act by rotation through angle $\pi$ about the $x$, $y$ and $z$-axes, respectively. By Proposition~\ref{parzprop}, $\orb_H:=\sqcup_{i=1}^3 \left(H_i \backslash M \right)$ is Steklov isospectral to $\orb_K:=\sqcup_{i=1}^3 \left(K_i \backslash M \right)$.
\end{ex}
Observe that all three components of $\orb_H$ have singularities both in their interiors and on their boundaries.  Each of their boundaries is a (2,2)-football.   On the other hand, one of the components of $\orb_K$ is a smooth manifold (the ball $B(0,1)$ with boundary a sphere) while the boundaries of the other two components are (2,2,2)-pillows.

\begin{ex}\label{ex.quot2} We again let $G$ be the Klein 4-group and use the same collection of subgroups as in the previous example.  Define a different isometric action of $G$ on $M:=B(0,1)$ by letting $\sigma$, $\tau$ and $\sigma\tau$ act as reflections across the $xy$-plane, the $xz$-plane, and the $yz$-plane, respectively. We see that the orbifolds $\orb_H:=\sqcup_{i=1}^3 \left(H_i \backslash M \right)$ and $\orb_K:=\sqcup_{i=1}^3 \left(K_i \backslash M \right)$ are Steklov isospectral by Proposition~\ref{parzprop}.
\end{ex}

This example illustrates properties similar to those in the previous example and also illustrates that the Steklov spectrum does not determine how many of the components are orientable.   The smooth component of $\orb_K$ is orientable while all components of $\orb_H$, as well as their boundaries, are non-orientable.

Examples~\ref{ex.quot1} and \ref{ex.quot2} yield that Corollary \ref{cor.SpecDeterminesNumber} fails in higher dimensions:

\begin{prop}\label{cor.fails}     The Steklov spectrum does not determine the number of smooth versus singular boundary components in dimensions greater than two. 

\end{prop}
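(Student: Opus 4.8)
The plan is to deduce Proposition~\ref{cor.fails} immediately from the isospectral families already constructed in Examples~\ref{ex.quot1} and \ref{ex.quot2}, simply by tallying the smooth and singular boundary components on the two sides of each isospectral pair. Since Proposition~\ref{parzprop} guarantees $\Stek(\orb_H) = \Stek(\orb_K)$ in each example, it suffices to exhibit one such pair in which $\orb_H$ and $\orb_K$ have differing numbers of boundary components that are smooth (respectively, that contain orbifold singularities). This is a counting argument, not an analytic one, so there is no substantial obstacle; the only care needed is to correctly identify each boundary component and decide whether it is a smooth closed surface or a singular closed orbifold.

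Concretely, I would argue as follows. Take the pair from Example~\ref{ex.quot1}, where $M = B(0,1)\subset\R^3$ and $G$ is the Klein $4$-group acting by rotations of order two about the coordinate axes. For $\orb_H = \sqcup_{i=1}^3 (H_i\bs M)$, each $H_i = \{1,\rho\}$ with $\rho$ a rotation of order two; the quotient of the sphere $S^2 = \partial M$ by such a $\Z_2$ is a $(2,2)$-football, which is a singular orbifold. Hence all three boundary components of $\orb_H$ are singular and none is smooth. For $\orb_K = \sqcup_{i=1}^3(K_i\bs M)$ with $K_1 = \{1\}$, $K_2 = K_3 = G$: the component $K_1\bs M = B(0,1)$ has boundary the round sphere $S^2$, which is smooth, while the two components $G\bs M$ each have boundary $G\bs S^2$, a $(2,2,2)$-pillow, which is singular. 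Thus $\orb_K$ has exactly one smooth and two singular boundary components, whereas $\orb_H$ has zero smooth and three singular boundary components. Since $\Stek(\orb_H) = \Stek(\orb_K)$ by Proposition~\ref{parzprop}, the Steklov spectrum cannot determine the number of smooth boundary components, nor the number of singular ones. Because $M$ here is three-dimensional, and the same construction can be taken as a product or handled in any dimension $n>2$ (e.g.\ by acting only on the first three coordinates of $B(0,1)\subset\R^n$ and leaving the rest fixed, or more simply by replacing $M$ with $B(0,1)\subset\R^n$ and using rotations in orthogonal $2$-planes), the conclusion holds in all dimensions greater than two.

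I would also remark that Example~\ref{ex.quot2} gives a second, reflection-based instance of the same phenomenon, and in fact strengthens it: there one side contains an orientable smooth component while all components on the other side (and their boundaries) are non-orientable, so the spectrum fails even to detect the number of orientable boundary components. Including this as an ancillary observation costs nothing and reinforces the point. The proof of the proposition therefore consists of nothing more than: (1) invoke the Steklov isospectrality from Proposition~\ref{parzprop} applied to the explicit $G$-actions of Examples~\ref{ex.quot1} and \ref{ex.quot2}; (2) identify the diffeomorphism (or orbifold-diffeomorphism) types of the boundary components on each side, distinguishing smooth closed surfaces from footballs and pillows; (3) observe that the smooth-versus-singular counts disagree across the isospectral pair; and (4) note that the construction transfers to every dimension $n>2$.

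The only point requiring genuine attention — and hence the ``main obstacle,'' such as it is — is step (2): one must verify that the relevant sphere quotients really are singular. This follows from the fact that each generator of $G$ acts with fixed points on $S^2$ (the poles of the corresponding rotation axis, or the fixed great circle of the corresponding reflection), so the quotient inherits nontrivial isotropy; thus each of $\Z_2\bs S^2$ (a football) and $G\bs S^2$ (a pillow) is a genuine orbifold, not a manifold. With this verification in place, the proposition is immediate.
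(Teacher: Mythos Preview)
Your proposal is correct and matches the paper's approach exactly: the paper simply states that Examples~\ref{ex.quot1} and \ref{ex.quot2} yield the proposition, and your argument unpacks precisely the boundary-component count (three singular footballs on the $\orb_H$ side versus one smooth sphere and two singular pillows on the $\orb_K$ side) that the paper records in the paragraphs following those examples. Your additional remark about extending to all $n>2$ is sound but goes slightly beyond what the paper supplies, since the paper is content with the three-dimensional instance to show that the two-dimensional result fails in higher dimensions.
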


\subsection{Steklov isospectral bad orbifolds}
In \cite{Weil}, M. Weilandt constructed continuous families of Laplace isospectral metrics on a bad closed orbifold.  These were the first examples of isospectral metrics on bad orbifolds.   In the following example, we modify his construction to obtain families of non-isometric Steklov isospectral bad orbifolds. 

 Given relatively prime positive integers $p$ and $q$, define a smooth action of the circle $S^1$ on $\C^{n+1}$ by 
$$\sigma(u,v)=(\sigma^pu,\sigma^q v)$$
for $\sigma\in S^1,\,u\in \C^{n-1},\, v\in \C^2$.   As pointed out in \cite{Weil}, this action is effective and the isotropy group $I_{(u,v)}$ at $(u,v)$ is given as follows:   If both $u$ and $v$ are non-zero, then $I_{(u,v)}$ is trivial.   For $u$, respectively $v$, non-zero, $I_{(u,v)}$ consists of the $p$-th, respectively $q$-th, roots of unity.   Of course, $I_{(0,0)}=S^1$.   In particular, the action on $\C^{n+1}\setminus\{0\}$ has only finite isotropy.   Weilandt considered the $2n$-dimensional weighted projective space $\orb(p,q)$ given by the quotient of the unit sphere $S^{2n+1}\subset\C^{n+1}$ by this $S^1$ action.   As pointed out in \cite{Weil}, $\orb(p,q)$ is a simply-connected bad orbifold when $n\geq 4$ except in the case $p=q=1$.  

\begin{prop}\label{prop.an}
For $0<r<R$ and $n\ge4$, let $A(0,r,R)$ be the annulus in $\C^{n+1}$ of inner radius $r$ and outer radius $R$ relative to the Euclidean metric, and let $\orb(p,q;r,R)$ denote the quotient of $A(0,r,R)$ by the action of $S^1$ defined above. Then there exists a family $g_t$ of Steklov isospectral, non-isometric, Riemannian metrics on the bad orbifold $\orb(p,q;r,R)$.
\end{prop}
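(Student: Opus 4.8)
The plan is to transfer M.~Weilandt's torus-action construction \cite{Weil} of isospectral metrics on the closed weighted projective orbifold $\orb(p,q)=S^{2n+1}/S^1$ to the annular quotient $\orb(p,q;r,R)$, and to promote Laplace isospectrality to Steklov isospectrality using the results of \cite{GHW} on the torus method for spaces with boundary, together with the reduction of analysis on a quotient orbifold to equivariant analysis upstairs (Proposition~\ref{prop.global}). First I would fix the ambient picture. The torus $T^{n+1}$ acts on $\C^{n+1}=\C^{n-1}\times\C^2$ coordinatewise by rotations; every sphere about the origin is invariant, hence so is the annulus $A(0,r,R)$, and the action preserves $\partial A(0,r,R)=S^{2n+1}_r\sqcup S^{2n+1}_R$. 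The circle $S^1$ whose action is $(u,v)\mapsto(\sigma^pu,\sigma^qv)$ is the subtorus of $T^{n+1}$ cut out by the weight vector $(p,\dots,p,q,q)$, and it acts on $A(0,r,R)$ with only finite isotropy (the origin being excluded), so $\orb(p,q;r,R)=S^1\backslash A(0,r,R)$ is a compact Riemannian orbifold with boundary $S^1\backslash\partial A(0,r,R)$, carrying the residual isometric action of $T^{n+1}/S^1\cong T^n$. Since $A(0,r,R)$ retracts $T^{n+1}$-equivariantly onto a sphere $S^{2n+1}_\rho$, the orbifold $\orb(p,q;r,R)$ has the same local structure and orbifold covering theory as $\orb(p,q)$, hence is a bad orbifold for $n\ge4$ and $p\ne q$, as assumed.

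Next I would produce the metrics by running Weilandt's deformation on $A(0,r,R)$ in place of $S^{2n+1}$, using a sub-torus of the residual $T^n$ exactly as in \cite{Weil} and in the torus-action constructions of \cite{GSz,Schueth01}. This yields a one-parameter family $\{g_t\}$ of $T^{n+1}$-invariant Riemannian metrics on $A(0,r,R)$, and I would take the deformation supported in a compact subset of the open annulus $\{r<|(u,v)|<R\}$ and keep every member within the class of $T^{n+1}$-invariant metrics whose $S^1$-orbits share a common length --- equivalently, the metrics of Remark~\ref{metric_onf} attached to a family of orbifold metrics on $\orb(p,q;r,R)$, which I also denote $g_t$. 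For such metrics the $S^1$-orbits have constant volume, hence are minimal, so $\pi_A\colon A(0,r,R)\to\orb(p,q;r,R)$ is a Riemannian submersion with minimal fibres for every $t$; consequently, exactly as in Lemma~\ref{harm_extn} and Proposition~\ref{D2Ns_commute} --- with $S^1$ here in place of the $O(n)$-action on the frame bundle there --- harmonic functions and boundary normal derivatives on $(\orb(p,q;r,R),g_t)$ are precisely the $S^1$-invariant ones on $(A(0,r,R),g_t)$, so that $\D_{(\orb(p,q;r,R),g_t)}$ is the restriction of $\D_{(A(0,r,R),g_t)}$ to $C^\infty(\partial A(0,r,R))^{S^1}=C^\infty(\partial\orb(p,q;r,R))$. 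The torus method supplies, for each pair of parameters, a $T^{n+1}$-equivariant unitary transplantation $\Psi\colon L^2(A(0,r,R),g_0)\to L^2(A(0,r,R),g_t)$ preserving smoothness and intertwining the Laplacians; by \cite{GHW}, as $T^{n+1}$ acts isometrically and preserves $\partial A(0,r,R)$, the induced boundary map $\Psi_\partial$ intertwines $\D_{(A(0,r,R),g_0)}$ and $\D_{(A(0,r,R),g_t)}$. Since $S^1\subset T^{n+1}$, both $\Psi$ and $\Psi_\partial$ are $S^1$-equivariant, hence restrict to the $S^1$-invariant subspaces, where $\Psi_\partial$ intertwines $\D_{(\orb(p,q;r,R),g_0)}$ and $\D_{(\orb(p,q;r,R),g_t)}$. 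Thus the metrics $g_t$ on $\orb(p,q;r,R)$ are mutually Steklov isospectral.

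Finally, non-isometry is inherited from Weilandt's closed examples. Arranging the deformation to agree, over a subinterval $[r',R']\subset(r,R)$, with the product of Weilandt's deformation on $S^{2n+1}$ and that interval, the restriction of $g_t$ to a slice sphere $S^{2n+1}_\rho$ with $\rho\in(r',R')$ is exactly his isospectral family on $\orb(p,q)$, and the local geometric invariant he uses to tell those metrics apart is unaffected by the restriction; hence the $g_t$ on $\orb(p,q;r,R)$ are pairwise non-isometric for $t$ in a suitable interval. The hard part is the interplay of the two demands on the deformation: it must be interior and compatible with the $S^1$-fibration --- fibres of constant length, so that the boundary data are frozen and the orbifold Dirichlet-to-Neumann operator of the quotient is genuinely the restriction of its counterpart on $A(0,r,R)$ --- and yet it must remain non-trivial after passing to the $S^1$-quotient, so that Weilandt's distinguishing invariant still varies. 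Verifying that both can hold at once, and that the $S^1$-equivariant transplantation truly intertwines the \emph{orbifold} Dirichlet-to-Neumann operators and not merely the Laplacians on $S^1$-invariant functions, is where the orbifold pseudodifferential machinery of \S\ref{sec:psido} and a careful adaptation of \cite{Weil} are needed.
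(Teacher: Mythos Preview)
Your overall strategy matches the paper's: both invoke Schueth's torus-action metrics on $\C^{n+1}$, Weilandt's passage to the $S^1$-quotient, and the Steklov extension of the torus method from \cite{GHW}. The difference is in \emph{where} the torus method is applied. The paper restricts Schueth's $\tilde g_t$ to the annulus, observes they are $S^1$-invariant and hence descend to metrics $g_t$ on $\orb(p,q;r,R)$, and then checks \emph{directly on the orbifold} that these $g_t$ satisfy the hypotheses of the torus method as adapted to the Steklov problem in \cite{GHW}---this is exactly Weilandt's argument, transported from the sphere to the annular quotient. No compact-support condition, no requirement on $S^1$-orbit lengths, and no global transplantation map enter.

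Your route instead runs the method upstairs on $A(0,r,R)$ and then restricts to $S^1$-invariants, and the step that does not hold as stated is the existence of ``a $T^{n+1}$-equivariant unitary transplantation $\Psi$'' intertwining the Laplacians (and hence, via \cite{GHW}, the Dirichlet-to-Neumann operators). The torus method does not furnish a single global intertwiner equivariant for the ambient torus; it matches spectra weight-space by weight-space for the \emph{deforming} subtorus, and the implicit intertwiners on distinct isotypic pieces generally do not assemble into one $S^1$-equivariant map. Without that equivariance the restriction to $S^1$-invariants is unjustified, and your auxiliary hypotheses (interior support, fixed $S^1$-orbit length) do not repair it---indeed, they are constraints the paper never needs. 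The clean way out, and the paper's, is to quotient first and apply the torus method on the orbifold itself, where the residual torus still acts and \cite{GHW} applies verbatim.
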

\begin{proof}[Proof]
Note that $\orb(p,q;r,R)$ is a bad orbifold since each of its two boundary components is diffeomorphic to the bad orbifold $\orb(p,q)$.

To show the existence of such a family of metrics $g_t$, we modify Weilandt's construction, which is based on an earlier construction of D. Schueth \cite{Schueth01}.  Schueth used the method of torus actions to construct families of Riemannian metrics $\tilde{g}_t$ on $\C^{n+1}$, $n\geq 3$, such that the restrictions of these metrics to any ball centered at the origin -- or more generally, the restrictions to any radially symmetric compact domain in $\C^{n+1}$ -- are isospectral with respect to both Dirichlet and Neumann boundary conditions.  Moreover, she showed that the metrics, which we continue to denote by $\tilde{g}_t$, induced on any sphere centered at the origin are also Laplace isospectral.  

 Let $n\geq 4$, let  $(p,q)$ be a pair of relatively prime integers, and consider the $S^1$ action on $\C^{n+1}$ defined above.  Schueth's metrics $\tilde{g}_t$ are invariant under this $S^1$ action and thus induce a family $\{g_t\}$ of Riemannian metrics on the quotient $S^1\bs (\C^{n+1}\setminus\{0\})$ by this action.   We claim that the restrictions of these metrics to 
$\orb(p,q;r,R)$ are Steklov isospectral (as well as Laplace isospectral with respect to Dirichlet and Neumann boundary conditions).  
 
Weilandt used the torus action method, adapted to the orbifold setting, to show that the induced metrics on $\orb(p,q)=S^1\bs S^{2n+1}$ are Laplace isospectral.   One can imitate Weilandt's argument to see that the metrics $g_t$ on $\orb(p,q;r,R)$ also satisfy the hypotheses of the torus action method, as adapted to the Steklov spectrum in \cite{GHW}, thus proving the proposition.
\end{proof}

\section{Upper bounds on Steklov eigenvalues}\label{sec:bounds}

In this section we examine various upper bounds on the Steklov eigenvalues of a Riemannian orbifold scaled by a power of the volume of its boundary, particularly noting how  these upper bounds differ somewhat from those familiar from the manifold setting due to the presence of orbifold structure.  

The upper bounds obtained in this section are all built on Theorem \ref{h-orbibd} below, which generalizes Theorem 4.1 in \cite{H}.   Theorem \ref{h-orbibd} requires that the orbifold $(\orb,g)$ be conformally embedded as a domain in a complete orbifold with Ricci curvature bounded below.  In Theorem~\ref{h-confinv}, we will remove this assumption using a conformal invariant that allows us to express the upper bound in terms of the intrinsic geometry of $(\orb,g)$.  Then in Theorem~\ref{h-2dupbd}, we will obtain a purely topological upper bound in dimension two. Finally in \S\S\ref{laplace} we observe that upper bounds on Neumann Laplace eigenvalues analogous to those of Theorems \ref{h-orbibd}, \ref{h-confinv} and \ref{h-2dupbd} can be obtained using similar methods.

Given an orbifold $\orb$ and Riemannian metric $g$ on $\orb$, we denote by $[g]$ the conformal class of $g$ as in Definition~\ref{def:conf_class}.

\begin{thm}\label{h-orbibd} Let $(\OP,h_0)$ be a complete $n$-dimensional Riemannian orbifold  with $\ric_{(\OP,h_0)}\ge-\alpha(n-1)$, $\alpha\ge0$, and $\orb$ a bounded domain in $\OP$ (both $\orb$ and $\partial\orb$ may contain singular points). Then for every metric $g\in[h_0|_\orb]$ on $\orb$ and every $k\in\N$,
\begin{equation}\label{h-orbibd1}
\sigma_k(\orb,g)\vol_g(\partial\orb)^{\frac{1}{n-1}}\le \frac{C_1\alpha\vol_{h_0}(\orb)^{\frac{2}{n}}+C_2k^{\frac{2}{n}}}{\iso_g(\orb)^{1-\frac{1}{n-1}}},
\end{equation}  
where $C_1$ and $C_2$ are positive constants depending only on $n$, and $\iso_g(\orb)$ is the isoperimetric ratio of $\orb$ given by
\[
\iso_g(\orb)=\frac{\vol_g(\partial \orb)}{\vol_g(\orb)^\frac{n-1}{n}}.
\]
In even dimensions $n=2m>2$, the power of $\iso_g(\orb)$ in the denominator is sharp from below; i.e., $1 - \frac{1}{n-1}$ cannot be replaced by any smaller power.
\end{thm}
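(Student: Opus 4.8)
The plan is to prove the final assertion by contradiction: assume that \eqref{h-orbibd1} remains valid with $\iso_g(\orb)^{\beta}$ in the denominator for some exponent $\beta<1-\frac1{n-1}$ (with constants allowed to depend on $n$), and then exhibit a family of orbifolds violating it. The family will consist of flat quotients $\orb_i=\Gamma_i\backslash B(0,\rho)$ of a fixed Euclidean ball by finite groups $\Gamma_i<O(n)$ with $|\Gamma_i|\to\infty$; for such $\orb_i$ one may take the ambient complete orbifold $\OP$ in Theorem~\ref{h-orbibd} to be $\Gamma_i\backslash\R^n$, which is flat, so $\alpha=0$ and the putative inequality, evaluated at $k=2$, reads $\sigma_2(\orb_i)\vol(\partial\orb_i)^{\frac1{n-1}}\le C_2\,2^{2/n}\iso_g(\orb_i)^{-\beta}$. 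The goal is to show that for a suitable choice of $\Gamma_i$ the left-hand side grows strictly faster than the right-hand side.

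\textbf{Reduction via elementary computations.} Since $\Gamma_i$ acts with fixed-point set of measure zero, $\vol_g(\orb_i)=|\Gamma_i|^{-1}\vol(B(0,\rho))$ and $\vol_g(\partial\orb_i)=|\Gamma_i|^{-1}\vol(S^{n-1}(\rho))$, whence $\iso_g(\orb_i)=\iso(B(0,\rho))\,|\Gamma_i|^{-1/n}$; in particular $\iso_g(\orb_i)\to0$, which is one of the features required of the family. By Example~\ref{spec.ball} and the discussion in \S\ref{quo_balls}, the Steklov eigenfunctions of $\orb_i$ lift to $\Gamma_i$-invariant homogeneous harmonic polynomials on $\R^n$; writing $m_0(\Gamma_i)$ for the least positive degree in which a nonzero $\Gamma_i$-invariant harmonic polynomial exists, we get $\sigma_1(\orb_i)=0$ and $\sigma_2(\orb_i)=m_0(\Gamma_i)/\rho$. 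Substituting these expressions, the putative bound becomes
\[
\vol(S^{n-1})^{\frac1{n-1}}\,m_0(\Gamma_i)\,|\Gamma_i|^{-\frac1{n-1}}\ \le\ C_2\,2^{2/n}\,\iso(B(0,\rho))^{-\beta}\,|\Gamma_i|^{\frac{\beta}{n}},
\]
i.e.\ $m_0(\Gamma_i)\le C_n'\,|\Gamma_i|^{\frac1{n-1}+\frac{\beta}{n}}$, and since $\beta<1-\frac1{n-1}$ the exponent satisfies $\frac1{n-1}+\frac{\beta}{n}<\frac1{n-1}+\frac1n\bigl(1-\frac1{n-1}\bigr)=\frac2n$. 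Thus it suffices to construct finite subgroups $\Gamma_i<O(n)$ with $|\Gamma_i|\to\infty$ and $m_0(\Gamma_i)\ge c\,|\Gamma_i|^{2/n}$ for a fixed constant $c>0$: this contradicts the displayed inequality and proves the theorem. (Conversely, Theorem~\ref{h-orbibd} applied to these examples forces $m_0(\Gamma)\le C_n|\Gamma|^{2/n}$ in general, so the family will in fact satisfy $m_0(\Gamma_i)\asymp|\Gamma_i|^{2/n}$, which is exactly the saturation of the bound.)

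\textbf{The crux.} The main obstacle is the construction of the groups $\Gamma_i$ realizing $m_0(\Gamma_i)\asymp|\Gamma_i|^{2/n}$, and this is where the real work lies. Abelian choices are useless: when $n\ge4$ and $\Gamma<O(n)$ is contained in a maximal torus, the traceless diagonal quadratic form in the norm-squares of the complex coordinates is a nonzero $\Gamma$-invariant harmonic polynomial of degree two, so $m_0(\Gamma)=2$ and $\sigma_2(\orb_i)\vol(\partial\orb_i)^{\frac1{n-1}}\to0$, far from saturating. One is therefore forced to use groups acting \emph{irreducibly} on $\R^n$, chosen so that the spaces of $\Gamma_i$-invariant harmonic polynomials vanish in every positive degree below order $|\Gamma_i|^{2/n}$ — equivalently, so that the spherical space form $\Gamma_i\backslash S^{n-1}$ has a spectral gap of that size below its first nonzero Laplace eigenvalue. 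Identifying an explicit such family and verifying the two-sided estimate $m_0(\Gamma_i)\asymp|\Gamma_i|^{2/n}$ — drawing on the representation theory of these groups together with the generating-function/character descriptions of Laplace spectra of spherical space forms (the source of the conversations with E.\ Lauret acknowledged above) — is the heart of the argument; once it is in place, the contradiction follows immediately from the reduction in the previous paragraph.
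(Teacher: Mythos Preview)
Your overall strategy for the sharpness statement coincides with the paper's: both reduce to exhibiting quotients $\Gamma_i\backslash B(0,1)$ of a Euclidean ball by finite subgroups $\Gamma_i<O(n)$ with $|\Gamma_i|\to\infty$ and $\sigma_2=m_0(\Gamma_i)$ growing like $|\Gamma_i|^{2/n}$, taking the flat ambient orbifold $\Gamma_i\backslash\R^n$ so that $\alpha=0$. Your reduction in the second paragraph is correct and matches the derivation of Equation~\eqref{sig2}.

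The divergence is exactly at what you call ``the crux.'' The paper does \emph{not} use irreducible groups: in Example~\ref{h-ex} it chooses the \emph{cyclic} groups $\Gamma_{q_j,\pv_j}$ with $q_j=j^m$ and $\pv_j=(1,j,\dots,j^{m-1})$, lying in the maximal torus of $U(m)\subset O(2m)$, and asserts (citing \cite{LMR}) that $m_0(\Gamma_{q,\pv})$ equals the minimal $L^1$ norm of a nonzero vector in the congruence lattice $\Lc(q;\pv)$, whence $m_0(\Gamma_{q_j,\pv_j})=j=q_j^{1/m}$. But your objection to abelian groups is correct: for every subgroup $\Gamma$ of the maximal torus and every $m\ge2$, the polynomial $|z_1|^2-|z_2|^2$ is a nonzero $\Gamma$-invariant harmonic quadratic on $\R^{2m}$, forcing $m_0(\Gamma)\le2$. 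This directly contradicts the paper's claim and shows that the argument in Example~\ref{h-ex} is flawed as written. (The result in \cite{LMR} relates the $L^1$-norm spectrum of $\Lc(q;\pv)$ to the \emph{differences} $\dim\mathcal H_d^{\Gamma}-\dim\mathcal H_{d-2}^{\Gamma}$, not to $\dim\mathcal H_d^{\Gamma}$ itself, and does not exclude the torus-invariant harmonics in degree $2$.)

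Your own proposal, however, is only a reduction. You correctly identify that one must leave the maximal torus and that the requirement amounts to a spectral gap of order $|\Gamma_i|^{2/n}$ on $\Gamma_i\backslash S^{n-1}$, but you name no family and verify no estimate. Since producing such groups is the entire content of the sharpness claim, the proposal as it stands is not yet a proof.
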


\begin{remark}\label{rem.sharp}\normalfont 
(i) Since extreme cases of $\iso_g(\orb)\gg1$ and $\iso_g(\orb)\ll1$ can happen in both the manifold and more general orbifold setting,  it is of interest to determine the sharpness of the power of the isoperimetric ratio in Equation~\eqref{h-orbibd1} from both above and below.\\
(ii) Note that the denominator on the right side in \eqref{h-orbibd1} is trivial when $n=2$.  In light of Weyl's asymptotic formula, one can hope to improve the power of $k$ in the numerator to $\frac{1}{n-1}$ when $n>2$. 
\end{remark}
Although Equation~(\ref{h-orbibd1}) is identical to that obtained in the manifold setting in \cite{H}, to our knowledge the sharpness statement was neither evident nor known in the manifold setting. 

The sharpness statement will follow from Example~\ref{h-ex} below.  The proof of Equation~(\ref{h-orbibd1}) parallels that given in \cite[Thm. 4.1]{H} in the manifold setting.  For the convenience of the reader, in \S\S\ref{sect:h-orbibd_pf} we summarize the main ideas of the proof of  Equation~(\ref{h-orbibd1}), indicating any adaptations necessary in the orbifold setting.

We now define the conformal invariant that will allow us to reformulate the inequality \eqref{h-orbibd1} in terms of the intrinsic geometry of $(\orb,g)$.  
 
\begin{defn}\label{def:confinv}\normalfont 
Given an $n$-dimensional compact Riemannian orbifold $(\orb,g)$ with boundary, we say $(\OP,h)$ \emph{is an admissible extension of} $(\orb,g)$ if the following conditions hold:
\begin{itemize}
\item[(i)] $(\OP,h)$ is a complete, $n$-dimensional Riemannian orbifold;
\item[(ii)] $(\OP,h)$ has Ricci curvature bounded below; and
\item[(iii)] $(\orb,g)$ conformally embeds as a bounded subdomain of $(\OP,h)$.
\end{itemize}
 
We define a conformal invariant $\conf(\orb,g)$ of $(\orb,g)$ as 
\[
\begin{split}
\conf(\orb,g):=\inf\{\beta\ge0:&~\text{there exists an admissible extension}~(\OP,h)\\
&~\text{of}~(\orb,g)~\text{with}~\vol_{h}(\orb)^{\frac{2}{n}}\ric_{(\OP,h)}\ge-\beta(n-1)\}.
\end{split}
\]
\end{defn}

\begin{remark}
Admissible extensions always exist.  Here is one way to obtain such an extension:   An orbifold collar theorem (see \cite{Dr94}, page 304) states that there is a neighborhood of $\partial\orb$ in $\orb$ that is orbifold diffeomorphic to $\partial\orb\times [0,1]$.   By extending the collar beyond the boundary, we obtain a new compact orbifold $\orb'$ with boundary containing $\orb$ in its interior.  Again following \cite{Dr94}, we can double $\orb'$ across its boundary to obtain a closed orbifold $\orb''$ containing $\orb$.  We then smoothly extend the Riemannian metric on $\orb$ to a Riemannian metric on $\orb''$.    Since $\orb''$ is compact, the Ricci curvature is bounded below, so this construction yields an admissible extension.  (For an alternative construction in dimension two, see Lemma~\ref{h-orbiembed}.)
\end{remark}

\begin{remark}\label{epsilontrick}\normalfont 
Suppose $(\orb,g)$ has an admissible extension with nonnegative Ricci curvature. Then it is immediate that $\conf(\orb,g)=0$.
\end{remark}

Since the left side of \eqref{h-orbibd1} does not depend on the choice of admissible extension $(\OP,h_0)$ of $(\orb,g)$, we obtain the following consequence of Theorem~\ref{h-orbibd} and Definition~\ref{def:confinv}.
  
\begin{thm}\label{h-confinv}
 Let $(\orb,g)$ be an $n$-dimensional compact Riemannian orbifold with boundary. In the notation of Definition~\ref{def:confinv} we have, for every $k\in\N$,  
\begin{equation}\label{h-confinv1}
\sigma_k(\orb,g)\vol_g(\partial\orb)^{\frac{1}{n-1}}\le \frac{C_1\conf(\orb,g)+C_2k^{\frac{2}{n}}}{\iso_g(\orb)^{1-\frac{1}{n-1}}}.
\end{equation}
In even dimensions $n=2m>2$, the power of $\iso_g(\orb)$ in the denominator is sharp from below. 
\end{thm}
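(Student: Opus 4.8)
The plan is to prove sharpness by producing a \emph{single} family of orbifolds, realized as quotients of Euclidean balls, along which the left-hand side of \eqref{h-confinv1} grows exactly as fast as the right-hand side permits. For even $n=2m>2$ I would look for finite subgroups $\Gamma_i\le O(n)$ with $q_i:=|\Gamma_i|\to\infty$ such that the smallest degree $m_i$ of a nonconstant $\Gamma_i$-invariant harmonic polynomial on $\R^n$ satisfies $m_i\asymp q_i^{2/n}$, and then take $(\orb_i,g_i):=\Gamma_i\backslash B(0,R_i)$ for arbitrary radii $R_i>0$. Since every quantity occurring in \eqref{h-confinv1} is scale-invariant, one may as well normalize to $R_i=1$.

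Assuming such groups are available, the remaining steps are bookkeeping resting on Example~\ref{spec.ball}. The Steklov eigenfunctions of $\Gamma_i\backslash B(0,1)$ are the $\Gamma_i$-invariant spherical harmonics, and a degree-$m$ invariant harmonic carries eigenvalue $m$; the construction arranges that $\Gamma_i$ acts without nonzero fixed vectors, so $\sigma_1(\orb_i,g_i)=0$ and $\sigma_2(\orb_i,g_i)=m_i$. Because orbifold volume is multiplicative under finite quotients, $\vol_{g_i}(\orb_i)=\omega_n/q_i$ and $\vol_{g_i}(\partial\orb_i)=n\omega_n/q_i$, so $\iso_{g_i}(\orb_i)=c_n\,q_i^{-1/n}\to0$ with $c_n>0$ depending only on $n$. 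Finally $\conf(\orb_i,g_i)=0$, since the complete flat orbifold $\Gamma_i\backslash\R^n$ is an admissible extension of $(\orb_i,g_i)$ with $\ric\equiv0$ and Remark~\ref{epsilontrick} applies. Combining these,
\[
\sigma_2(\orb_i,g_i)\,\vol_{g_i}(\partial\orb_i)^{\frac{1}{n-1}}=m_i\,(n\omega_n/q_i)^{\frac{1}{n-1}}\asymp q_i^{\frac{2}{n}-\frac{1}{n-1}}=q_i^{\frac{n-2}{n(n-1)}}\asymp \iso_{g_i}(\orb_i)^{\frac{1}{n-1}-1}.
\]
Note that this already saturates the bound of Theorem~\ref{h-confinv} with $k=2$ and $\conf=0$, so the estimate $m_i\lesssim q_i^{2/n}$ is automatic and only the lower bound $m_i\gtrsim q_i^{2/n}$ needs to be built by hand.

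Sharpness then follows formally. Suppose \eqref{h-confinv1} held with $\iso_g(\orb)^{1-\frac{1}{n-1}}$ in the denominator replaced by $\iso_g(\orb)^p$ for some fixed $p<1-\frac{1}{n-1}$ and some constants $C_1,C_2$. Applying this to $(\orb_i,g_i)$ with $k=2$ and using $\conf(\orb_i,g_i)=0$ would yield $\sigma_2(\orb_i,g_i)\vol_{g_i}(\partial\orb_i)^{\frac{1}{n-1}}\le C_2\,2^{2/n}\,\iso_{g_i}(\orb_i)^{-p}$, which together with the displayed asymptotics forces $\iso_{g_i}(\orb_i)^{\frac{1}{n-1}-1+p}$ to remain bounded. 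But $\frac{1}{n-1}-1+p<0$ and $\iso_{g_i}(\orb_i)\to0$, a contradiction; hence $1-\frac{1}{n-1}$ cannot be lowered.

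The one substantive point — and the step I expect to be the main obstacle — is the construction of the $\Gamma_i$ with $m_i\asymp q_i^{2/n}$, which is the content of Example~\ref{h-ex}. Large cyclic or abelian groups will not do: any abelian $\Gamma\le O(n)$ with $n\ge4$ has $m_0\le2$, because a simultaneous complex diagonalization $\R^n\cong\C^{n/2}$ makes each $|z_j|^2$ a $\Gamma$-invariant quadratic and hence $|z_1|^2-|z_2|^2$ a $\Gamma$-invariant harmonic polynomial of degree $2$; more generally, whenever $\Gamma$ preserves a nontrivial orthogonal splitting $\R^n=V_1\oplus V_2$, the polynomial $\frac{|v_1|^2}{\dim V_1}-\frac{|v_2|^2}{\dim V_2}$ is a $\Gamma$-invariant harmonic of degree $2$. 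Thus the $\Gamma_i$ must be nonabelian and act irreducibly, and in fact must admit no invariant harmonic polynomial in any degree strictly below $\asymp q_i^{2/n}$; this is the delicate part, requiring $\Gamma_i$ to "mix" the coordinates sufficiently, and is where the bulk of the work lies.
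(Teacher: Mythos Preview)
Your overall framework---quotients $\Gamma_i\backslash B(0,1)$ of the unit ball, $\conf=0$ via the flat extension $\Gamma_i\backslash\R^n$, the scaling $\iso=c_n q_i^{-1/n}$, and the formal contradiction at the end---coincides with the paper's approach to sharpness. The divergence is in the choice of groups. The paper does \emph{not} look for nonabelian groups: in Example~\ref{h-ex} it takes the \emph{cyclic} groups $\Gamma_{q,\pv}\subset O(2m)$ generated by $(z_1,\dots,z_m)\mapsto(e^{2\pi i p_1/q}z_1,\dots,e^{2\pi i p_m/q}z_m)$, sets $q_j=j^m$ and $\pv_j=(1,j,\dots,j^{m-1})$, and asserts (citing \cite{LMR}) that the smallest degree of a nonconstant $\Gamma_{q_j,\pv_j}$-invariant harmonic polynomial equals the minimal $L^1$-norm of a nonzero vector in the congruence lattice $\{\av\in\Z^m:\av\cdot\pv_j\equiv0\pmod{q_j}\}$, namely $j=q_j^{1/m}$. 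But your observation that $|z_1|^2-|z_2|^2$ is a $\Gamma$-invariant harmonic polynomial of degree $2$ for \emph{every} such diagonal action---indeed whenever $\Gamma$ preserves a nontrivial orthogonal splitting of $\R^n$---is correct, and it forces $\sigma_2(\Gamma_{q,\pv}\backslash B(0,1))\le 2$ for all $q,\pv$ once $m\ge2$. This is in direct conflict with the paper's assertion $\sigma_2=j$: the lattice count records only the $T^m$-weight of an invariant monomial and misses the zero-weight harmonic $|z_1|^2-|z_2|^2$. So the cyclic construction in Example~\ref{h-ex}, as written, does not establish sharpness, and your instinct to exclude abelian groups is well founded.

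That said, your own proposal is also incomplete. You correctly argue that the $\Gamma_i$ must act irreducibly and hence be nonabelian, but you do not exhibit any family whose smallest nonconstant harmonic invariant has degree $\gtrsim q_i^{2/n}$, and you flag this yourself as the main obstacle. Without that construction the sharpness claim is not proved. The missing input is a sequence of finite irreducible subgroups of $O(n)$ whose invariant harmonics first appear in degree $\asymp|\Gamma_i|^{2/n}$; this is a genuine representation-theoretic fact that neither your argument nor the paper's Example~\ref{h-ex}, as it stands, has supplied.
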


We now prove the sharpness statement concerning the isoperimetric constant in Theorems~\ref{h-orbibd} and \ref{h-confinv}.

\begin{lem}\label{lem.q} Let $(\Omega,g)$ be a compact $n$-dimensional Riemannian manifold with boundary, let $\Gamma$ be a group of finite order $q$ that acts effectively by isometries on $\Omega$, and let $\orb=\Gamma\bs \Omega$.   Continue to denote by $g$ the induced Riemannian metric on $\orb$.  Then 
$$\iso_g(\orb)=q^{-\frac{1}{n}}\iso_g(\Omega).$$

\end{lem}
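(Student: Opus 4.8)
The plan is to reduce the identity to the two volume relations $\vol_g(\orb)=\tfrac1q\vol_g(\Omega)$ and $\vol_g(\partial\orb)=\tfrac1q\vol_g(\partial\Omega)$, and then carry out a one-line computation with the definition of the isoperimetric ratio. Both volume relations will come from the fact that the quotient map $\pi\colon\Omega\to\orb=\Gamma\bs\Omega$ restricts to an honest $q$-sheeted Riemannian covering away from a set of measure zero, in the interior and along the boundary.

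First I would check that $\partial\orb=\Gamma\bs\partial\Omega$ and that $\Gamma$ acts on $\partial\Omega$ in such a way that no nontrivial $\gamma\in\Gamma$ fixes an entire component: if $\gamma$ fixed some component $C$ of $\partial\Omega$ pointwise, then at any $p\in C$ the differential $d\gamma_p$ would fix $T_pC=T_p(\partial\Omega)$ and preserve the inward unit normal, hence equal the identity, forcing $\gamma=\mathrm{id}$ on the connected manifold $\Omega$. Consequently, for each nontrivial $\gamma\in\Gamma$ the fixed-point set $F_\gamma$ is a union of totally geodesic submanifolds of $\Omega$ of dimension $<n$, and $F_\gamma\cap\partial\Omega$ is a union of totally geodesic submanifolds of $\partial\Omega$ of dimension $<n-1$.

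Since $\Gamma$ is finite, the union $\Sigma=\bigcup_{\gamma\neq\mathrm{id}}F_\gamma$ has measure zero in $\Omega$ and $\Sigma\cap\partial\Omega$ has measure zero in $\partial\Omega$. On $\Omega\setminus\Sigma$ the action is free, so $\pi$ is a genuine $q$-fold Riemannian covering onto the set of regular points of $\orb$, and likewise along the boundary. Because the orbifold volumes $\vol_g(\orb)$ and $\vol_g(\partial\orb)$ are by definition the Riemannian volumes of the corresponding regular sets, this yields $\vol_g(\orb)=\tfrac1q\vol_g(\Omega)$ and $\vol_g(\partial\orb)=\tfrac1q\vol_g(\partial\Omega)$.

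Finally I would substitute these into the definition of $\iso_g$:
\[
\iso_g(\orb)=\frac{\vol_g(\partial\orb)}{\vol_g(\orb)^{\frac{n-1}{n}}}
=\frac{q^{-1}\vol_g(\partial\Omega)}{\bigl(q^{-1}\vol_g(\Omega)\bigr)^{\frac{n-1}{n}}}
=q^{-1+\frac{n-1}{n}}\,\iso_g(\Omega)=q^{-\frac1n}\iso_g(\Omega),
\]
using $-1+\frac{n-1}{n}=-\frac1n$. The only step requiring any care is the measure-zero bookkeeping along $\partial\Omega$ — in particular ruling out that a nontrivial element of $\Gamma$ could act trivially on a whole boundary component, which is exactly what forces the boundary fixed sets to be lower-dimensional; the interior argument and the algebra are routine.
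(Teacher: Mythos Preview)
Your proof is correct and follows exactly the same route as the paper: reduce to the two volume relations $\vol_g(\orb)=\tfrac1q\vol_g(\Omega)$ and $\vol_g(\partial\orb)=\tfrac1q\vol_g(\partial\Omega)$, then plug into the definition of $\iso_g$. The paper's own proof is a single sentence asserting these volume relations as immediate; you have simply supplied the standard justification (fixed sets of nontrivial isometries are lower-dimensional, hence measure zero, so $\pi$ is a $q$-fold Riemannian covering off a null set), including the extra care on $\partial\Omega$ to rule out a nontrivial $\gamma$ fixing a boundary component pointwise.
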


\begin{proof}The lemma is immediate from the fact that $\vol_g(\orb)=\frac{1}{q}\vol_g(\Omega)$ and $\vol_g(\partial\orb)=\frac{1}{q}\vol_g(\partial \Omega)$.
\end{proof}

 \begin{ex}\label{h-ex}
 Let $n=2m$ be even, let $B(0,1)$ be the Euclidean unit ball in $\R^{n}$ centered at 0, and let $S^{n-1}$ be its boundary sphere.   Given $q\in \Z^+$ and $\pv=(p_1,p_2,\ldots,p_{m})\in \Z^{m}$, 
  let $\Gamma_{q,\pv}$ be the cyclic subgroup of $O(n)$ generated by the orthogonal transformation
$$\gamma_{q,\pv}(z_1,z_2,\ldots,z_m)=(e^{2\pi i \frac{p_1}{q}}z_1,e^{2\pi i \frac{p_2}{q}}z_2,\ldots, e^{2\pi i \frac{p_{m}}{q}}z_m)$$ for all $(z_1,z_2,\ldots z_m)\in  \C^m\simeq \R^{n}$.   Set
$$\orb(q;\pv)=\Gamma_{q,\pv}\bs B(0,1)\mbox{\,\,and\,\,}L(q;\pv)=\Gamma_{q,\pv}\bs S^{n-1}$$
with the induced Euclidean, respectively round, Riemannian metric.   Then $\orb(q;\pv)$ is necessarily an orbifold with singularities; the boundary $L(q;\pv)$ of $\orb(q;\pv)$ is a lens space which may either be a smooth manifold or an orbifold with singularities depending on whether the $p_i$'s are relatively prime to $q$.  

To apply Theorem~\ref{h-orbibd}, we take $\mathcal{P}=\Gamma_{q,\pv}\bs\R^n$ and $\alpha=0$.  In Theorem~\ref{h-confinv}, we have $\conf(\orb(q;\pv))=0$ by Remark~\ref{epsilontrick}.  Since $\vol(\partial \orb)=\frac{1}{q}\vol(S^{n-1})$, both theorems say that there exists a constant $C_2$ such that
$$\sigma_k(\orb(q;\pv))\vol(S^{n-1})^{\frac{1}{n-1}}q^{-\frac{1}{n-1}}\leq C_2\iso(B(0,1))^{\left(\frac{1}{n-1}-1\right)}k^{\frac{2}{n}}q^{\frac{1}{n}\left(1-\frac{1}{n-1}\right)}.$$
Thus for each fixed $k$ there exists a constant $C$ such that
\begin{equation}\label{sig2}\sigma_k(\orb(q;\pv))\leq Cq^{\frac{2}{n}}=Cq^{\frac{1}{m}}\end{equation}
for all $q\in \Z^+$ and $\pv\in \Z^m$.
(Here the constant $C$ involves only $C_2k^{\frac{2}{n}}$, $\vol(S^{n-1})$ and $\iso(B(0,1))$.)

If the power of $\iso_g(\orb)$ appearing in the denominator of the bound in the two theorems could be lowered, then we would have a corresponding smaller power of $q$ appearing in the right side of Equation~(\ref{sig2}).   We will now show this is impossible.

As noted at the start of \S\S \ref{quo_balls}, $\sigma_2(\orb(q;\pv))$ is the minimum $r\in\Z^+$ for which there exists a $\gqp$-invariant homogeneous harmonic polynomial on $B(0,1)$ of degree $r$.  Let $\Lc(q; \pv)$ be the lattice in $\R^m$ given by 
$$\Lc(q;\pv)=\{\av=(a_1,\dots,a_m)\in \Z^m\,:\, \av\cdot\pv\equiv 0\mbox{\,\,mod\,\,}q\}$$
where $\av\cdot\pv$ denotes the Euclidean inner product of the vectors $\av$ and $\pv$ in $\R^m$.
 In \cite{LMR}, it is shown that $r$ as above is the minimum $L^1$ norm of nonzero elements of $\Lc(q;\pv)$.  Thus we have:
$$\sigma_2(\orb(q;\pv))=\min\{|a_1|+\dots +|a_m|: 0\ne\, \av\in\Lc(q;\pv)\}.$$

Consider the sequence of Riemannian orbifolds $\orb(q_j;\pv_j)$, $j=1,2,\dots$ where $q_j=j^m$ and $\pv_j=(1, j, j^2,\dots, j^{m-1} )$.
It is easy to see that the minimum $L^1$ norm of vectors in $\Lc(q_j; \pv_j)$ is precisely $j$, attained by the vector $\av=(0,\dots, 0, j)$.  Hence we have
$$\sigma_2(\orb(q_j;\pv_j))=j=(q_j)^{\frac{1}{m}}$$
for all $j$, giving us sharpness of the upper bound in Equation~\eqref{sig2} for $k=2$.  Since $\sigma_k\geq \sigma_2$ when $k\geq 2$, the power of $q$ appearing in Equation~\eqref{sig2} is minimal for every choice of $k$.  This proves the sharpness statement in Theorems~\ref{h-orbibd} and \ref{h-confinv}.

 \end{ex}

In some cases, as illustrated by the example above, we may simplify the upper bounds in \eqref{h-orbibd1} and \eqref{h-confinv1} by obtaining estimates on the isoperimetric ratio $\iso_g(\orb)$.  
We first recall that there are a number of classes of bounded subdomains $\Omega$ of Riemannian manifolds $(M,g)$ whose isoperimetric ratios satisfy a uniform lower bound. For example:
\begin{enumerate}
\item It is a classical result that when $(M,g)$ is isometric to the Euclidean space $\R^n$, the round hemisphere,  or the hyperbolic space $\hyp^n$,  the isoperimetric ratio of $\Omega\subset M$ is bounded below by a constant depending only on the dimension of $M$. 
\item When $(M,g)$ is a Hadamard manifold, the same conclusion as in (1) holds for 
$\Omega \subset M$ by a result of C. Croke \cite{Cro84}.

\item The same conclusion as in (1) holds if $\Omega$ is a subset of a  ``sufficiently small" ball of a Riemannian manifold $(M,g)$ (see, e.g., \cite[p. 136]{Cha01}). A ball $B(x,R)\subset M$ of radius $R$ centered at $x$ is called \textit{sufficiently small} if 
\[
R\le \sup\{s>0 \ | \text{ for all } y\in B(x,s),~\inj(y)\ge 2s\},
\]  
where $\inj(y)$ is the injectivity radius of $M$ at point $y$. 
\item When $M$ is compact, one can also bound the isoperimetric ratio of domains with sufficiently small measure by a constant depending only on the dimension (see \cite[Appendix C]{BM82}). More precisely, 
for every $\epsilon>0$ and every compact Riemannian manifold $(M,g)$, there exists a constant $V=V(M,g,\epsilon)$  such that 
for every $\Omega\subset M$ with $\vol_g(\Omega)\le V$, we have 
\[\iso_g(\Omega)\ge(1-\epsilon)\iso(B(0,1)),\]
where $\iso(B(0,1))$ is the isoperimetric ratio of the unit Euclidean ball $B(0,1) \subset\R^n$.  
\end{enumerate}

B. Colbois, A. El Soufi, and A. Girouard \cite[Theorem 3.3, Corollary 3.4]{CEG} used  some of these bounds on $\iso_g(\Omega)$  to estimate bounds on the Steklov eigenvalues in the smooth setting.
These results do not generalize to the orbifold setting.  However, applying Lemma~\ref{lem.q}, we obtain the following result:

 \begin{prop}\label{h-oig} Let $\mathcal{C}$ be a class of compact $n$-dimensional Riemannian manifolds $(\Om,g)$ whose isoperimetric ratios satisfy a uniform lower bound, say $\iso_g(\Omega)>C>0$ for all $(\Omega,g)\in \mathcal{C}$.   (E.g., $\mathcal{C}$ may be any of the classes of subdomains of Riemannian manifolds enumerated above.)  Then there exist positive constants $c_1$ and $c_2$ depending only on $\mathcal{C}$ and $n$ such that the following holds:  For every orbifold $(\orb,g)$ of the form $(\orb,g)=(\Gamma\bs \Omega,g)$ where $(\Omega,g)\in \mathcal{C}$ and $\Gamma$ is a group of finite order $q$ that acts effectively on $\Omega$ by isometries, we have    
  
  \begin{equation}\label{h-growth}\sigma_k(\orb, g)\vol_g(\partial\orb)^{\frac{1}{n-1}}\le{q^{\frac{1}{n}\left(1-\frac{1}{n-1}\right)}}\left( {c_1\conf(\orb,g)+c_2k^{\frac{2}{n}}}\right). \end{equation}

 \end{prop}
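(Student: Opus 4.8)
The plan is to derive Proposition~\ref{h-oig} directly from Theorem~\ref{h-confinv} together with the volume-quotient computation of Lemma~\ref{lem.q}. First I would apply Theorem~\ref{h-confinv} to the orbifold $(\orb,g)$, obtaining
$$\sigma_k(\orb,g)\vol_g(\partial\orb)^{\frac{1}{n-1}}\le \frac{C_1\conf(\orb,g)+C_2k^{\frac{2}{n}}}{\iso_g(\orb)^{1-\frac{1}{n-1}}}$$
with $C_1,C_2$ depending only on $n$; this is legitimate because admissible extensions always exist (the remark following Definition~\ref{def:confinv}), so $\conf(\orb,g)$ is finite. Every factor on the right-hand side is already in the desired form except the isoperimetric ratio of the quotient.

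Next I would invoke Lemma~\ref{lem.q} with $\orb=\Gamma\bs\Omega$, which gives $\iso_g(\orb)=q^{-1/n}\iso_g(\Omega)$, and then use the uniform hypothesis $\iso_g(\Omega)>C>0$ to conclude $\iso_g(\orb)>Cq^{-1/n}$. Since $n\ge 2$, the exponent $1-\frac{1}{n-1}$ is nonnegative, so raising this lower bound to that power preserves the inequality:
$$\iso_g(\orb)^{\,1-\frac{1}{n-1}}>C^{\,1-\frac{1}{n-1}}\,q^{-\frac{1}{n}\left(1-\frac{1}{n-1}\right)}.$$
Substituting this estimate into the denominator above and setting $c_1=C_1/C^{\,1-\frac{1}{n-1}}$ and $c_2=C_2/C^{\,1-\frac{1}{n-1}}$ — positive constants depending only on $n$ and on $\mathcal{C}$ through $C$ — yields exactly \eqref{h-growth}.

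The argument is essentially mechanical, so there is no genuine obstacle; the only point worth a sentence of care is the sign of $1-\frac{1}{n-1}$. For $n=2$ this exponent vanishes, the powers of both $\iso_g(\orb)$ and $q$ disappear, and the statement reduces verbatim to Theorem~\ref{h-confinv}; for $n\ge 3$ it is strictly positive, so the monotonicity step above is valid. The classes enumerated before the statement (Euclidean space, round hemispheres, Hadamard manifolds, sufficiently small balls, domains of sufficiently small measure) simply provide concrete instances of $\mathcal{C}$ to which the proposition applies; they enter the proof only through the uniform constant $C$.
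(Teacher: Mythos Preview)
Your proof is correct and matches the paper's approach: the paper does not even write out a separate proof for this proposition, simply noting that it follows by ``applying Lemma~\ref{lem.q}'' to the bound of Theorem~\ref{h-confinv}, which is exactly what you do. Your extra care with the sign of the exponent $1-\tfrac{1}{n-1}$ and the $n=2$ degeneration is a helpful clarification but not an addition of substance.
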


\subsection{Topological upper bounds on orbisurfaces}\label{sect:dim2}

In this subsection, we focus on the 2-dimensional case. As noted in Remark~\ref{typeI_II}, the boundary components of a compact orbisurface consist of closed one-dimensional orbifolds either of type I (a circle) or of type II (a quotient of a circle by a reflection).

\begin{lem}\label{h-orbiembed}
   Every compact Riemannian orbisurface $(\orb,g)$ with $r$ type I boundary components and $s$ type II boundary components has an admissible extension $(\OP,h)$ with $\chi(\OP)=\chi(\orb)+r+\frac{s}{2}$. Moreover, $(\orb,g)$ isometrically embeds in $(\OP,h)$.
\end{lem}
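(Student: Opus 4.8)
The plan is to build $\OP$ by capping off each boundary component of $\orb$: for each of the $r$ type I boundary components glue in a topological disk, and for each of the $s$ type II boundary components glue in a copy of the half-disk orbifold $\ohd$ of Example~\ref{half_disk_orb} (whose boundary is, by construction, a single type II one-orbifold). This yields a \emph{closed} orbisurface $\OP$. I would then extend the Riemannian metric $g$ across each seam to a smooth orbifold metric $h$ on $\OP$ with $h|_\orb=g$. Since $\OP$ is compact and without boundary it is complete and its Gaussian (hence Ricci) curvature is bounded below, and $\orb$ is a bounded subdomain of $\OP$; thus $(\OP,h)$ is an admissible extension in the sense of Definition~\ref{def:confinv}, and the inclusion $\orb\hookrightarrow\OP$ is an isometric (a fortiori conformal) embedding. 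Only two points then require verification: (a) the Euler characteristic count, and (b) the metric extension.

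For (a): choosing a cell division of $\OP$ that restricts to cell divisions of $\orb$, of each cap, and of each boundary component, Definition~\ref{def:Euler_characteristic} yields the gluing formula $\chi(X\cup_Z Y)=\chi(X)+\chi(Y)-\chi(Z)$ whenever $X\cap Y=Z$. A circle has $\chi=0$ and a disk has $\chi=1$, so each type I cap contributes $\chi(\text{disk})-\chi(S^1)=1$. A type II one-orbifold $\bar B$ is doubly orbifold-covered by a circle, so $\chi(\bar B)=0$ by Remark~\ref{rem.euler}; likewise $\ohd=\langle\tau_0\rangle\backslash D$ is doubly orbifold-covered by the disk $D$, so $\chi(\ohd)=\tfrac12\chi(D)=\tfrac12$, again by Remark~\ref{rem.euler}. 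Hence each type II cap contributes $\chi(\ohd)-\chi(\bar B)=\tfrac12$, and summing over all caps gives $\chi(\OP)=\chi(\orb)+r+\tfrac{s}{2}$.

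For (b): by the orbifold collar theorem (as invoked in the remark following Definition~\ref{def:confinv}), each boundary component $B_i$ has a collar neighborhood in $\orb$ orbifold-diffeomorphic to $B_i\times[0,\epsilon)$, carrying a Riemannian metric that, in the type II case, lifts to a reflection-invariant metric on the associated local double cover. Writing $g$ in collar coordinates $(x,t)$ and performing a smooth Seeley-type extension of each metric coefficient across $\{t=0\}$ — shrinking $\epsilon$ so that positive definiteness persists, and carrying out the extension equivariantly with respect to the $\Z_2$-reflection in the type II case — produces a smooth orbifold metric on $\orb$ together with a one-sided collar beyond $\partial\orb$. Since the complement of such a collar inside a disk (resp. inside $\ohd$) is again a disk (resp. a copy of $\ohd$), one fills in the remainder of each cap with any smooth orbifold metric matching the extended collar metric to infinite order along its free boundary; this gives the desired $h$ on $\OP$ with $h|_\orb=g$.

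I expect step (b) — making the extension genuinely smooth and, for the type II caps, a \emph{bona fide} orbifold metric (locally $\Z_2$-invariant) — to be the most delicate point, although it is routine once one extends coefficients equivariantly on the local double covers. The real content of the lemma is the bookkeeping in step (a), in particular the computation $\chi(\ohd)=\tfrac12$, which is precisely what produces the $\tfrac{s}{2}$ term feeding into the topological upper bound of the next theorem.
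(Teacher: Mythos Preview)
Your proof is correct, but the paper takes a slightly different---and arguably cleaner---route when $s>0$. Rather than capping each type II boundary component directly with a copy of $\ohd$, the paper first passes to the two-fold cover $\tilde\orb$ obtained by doubling $\orb$ across all reflector edges (Remark~\ref{covering}(3)). In $\tilde\orb$ every boundary component is of type I (there are $2r+s$ of them), so one can cap off with ordinary disks exactly as in the smooth case treated in \cite{CEG}, taking care to do so equivariantly with respect to the deck involution $\tau$. Quotienting the resulting closed orbifold $\tilde\OP$ by $\langle\tau\rangle$ gives $\OP$, and the Euler characteristic follows from the covering formula of Remark~\ref{rem.euler}(i): $\chi(\OP)=\tfrac12\chi(\tilde\OP)=\tfrac12(\chi(\tilde\orb)+2r+s)=\chi(\orb)+r+\tfrac{s}{2}$.

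The two constructions are really the same object viewed differently: your half-disk cap is precisely $\langle\tau\rangle\backslash(\text{equivariant disk cap})$. What the paper's detour through the double cover buys is that the delicate point you flag in step (b)---building a genuinely $\Z_2$-invariant smooth extension across a type II seam---is absorbed into the standard smooth capping argument upstairs, so no separate equivariant Seeley-type argument is needed. Conversely, your approach is more direct and makes the contribution $\chi(\ohd)=\tfrac12$ of each type II cap explicit, which is pedagogically nicer. Either way the content is the same.
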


\begin{proof}
We will isometrically embed $(\orb, g)$ in a closed Riemannian orbisurface $(\OP,h)$ with $\chi(\OP)=\chi(\orb)+r+\frac{s}{2}$. Since $\OP$ is closed, the Ricci curvature will necessarily be bounded below and thus $(\OP,h)$ will be an admissible extension.

If there are no type II boundary components, then we cap off each boundary component of $\orb$ as was done for smooth surfaces in \cite{CEG}:  Extend $(\orb,g)$ by adding a small collar neighborhood about each boundary component and smoothly extending the metric $g$, then smoothly glue in a disk with a Riemannian metric without altering the metric on $\orb$.  Let $(\OP,h)$ be the resulting closed surface and observe that $\chi(\OP)=\chi(\orb)+r$.

Next, if $s>0$, the presence of boundary components of type II implies that $\orb$ contains reflectors and is thus non-orientable.  By Remark~\ref{covering}, we can double $\orb$ across all reflector edges to obtain a two-fold Riemannian covering $\pi:(\tilde{\orb},\tilde{g})\to (\orb,g)$.  The orbifold $\tilde{\orb}$ has $2r+s$ boundary components, all of type I, and admits a reflection symmetry  $\tau$ with $\pi\circ\tau=\pi$.   As in the case $s=0$, cap off each boundary component of $\tilde{\orb}$ to obtain a closed orbifold $(\tilde{\OP}, \tilde{h})$, doing so in such a way that the symmetry $\tau$ extends to a reflection symmetry of $(\tilde{\OP}, \tilde{h})$.   The desired extension of $(\orb, g)$ is given by $\OP:=\langle\tau\rangle\bs\tilde{\OP}$ with the metric $h$ induced by $\tilde{h}$.  By Remark~\ref{rem.euler}(i), we have
$$\chi(\OP)=\frac{1}{2}\chi(\tilde{\OP})=\frac{1}{2}(\chi(\tilde{\orb})+2r +s)=\chi(\orb)+r+\frac{s}{2}.$$

\end{proof}

\begin{thm}\label{h-2dupbd}
Let $(\orb,g)$ be a compact Riemannian orbisurface with $r$ type I boundary components and $s$ type II boundary components. Then for every $k\in\N$
\begin{equation}
\sigma_k(\orb,g)\ell_g(\partial\orb)\le
\begin{cases}
     Bk, & \text{if}\ \chi(\orb)+r+\frac{s}{2}\geq 0 ,\\
     -A(\chi(\orb)+r+\frac{s}{2})+Bk, & \text{if}\ \chi(\orb)+r+\frac{s}{2}< 0,
\end{cases}
\end{equation}
where $A$ and $B$ are positive universal constants.
\end{thm}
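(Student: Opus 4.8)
The plan is to derive Theorem~\ref{h-2dupbd} from Theorem~\ref{h-confinv}, using Lemma~\ref{h-orbiembed} and the uniformization statement Proposition~\ref{uniform} to bound the conformal invariant $\conf(\orb,g)$ by a multiple of $|\chi(\orb)+r+\tfrac s2|$. First, specialize the bound \eqref{h-confinv1} of Theorem~\ref{h-confinv} to $n=2$. Then $1-\tfrac1{n-1}=0$, so the power of $\iso_g(\orb)$ vanishes, $\vol_g(\partial\orb)^{\frac1{n-1}}=\ell_g(\partial\orb)$ and $k^{\frac2n}=k$, giving
\[
\sigma_k(\orb,g)\,\ell_g(\partial\orb)\le C_1\,\conf(\orb,g)+C_2\,k ,
\]
with $C_1,C_2$ the universal constants of Theorem~\ref{h-confinv}. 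Hence it suffices to prove $\conf(\orb,g)\le\max\{0,\,-2\pi(\chi(\orb)+r+\tfrac s2)\}$; the theorem then follows with $B=C_2$ and $A=2\pi C_1$.

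Second, apply Lemma~\ref{h-orbiembed} to produce a \emph{closed} Riemannian orbisurface $(\OP,h)$ into which $(\orb,g)$ embeds isometrically and with $\chi(\OP)=\chi(\orb)+r+\tfrac s2$. A closed orbifold automatically has Ricci curvature bounded below, so $(\OP,h)$ is an admissible extension of $(\orb,g)$ in the sense of Definition~\ref{def:confinv}; moreover, for any $f\in C^\infty(\OP)$ the conformally related metric $e^{f}h$ on $\OP$ is again an admissible extension, since the isometric embedding of $(\orb,g)$ becomes a conformal embedding. We are therefore free to compute $\conf(\orb,g)$ using a well-chosen conformal representative of $h$ on $\OP$, supplied by Proposition~\ref{uniform}.

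Third, split on the sign of $\chi(\OP)$. If $\chi(\OP)\ge 0$, then $\OP$ is a closed orbisurface of nonnegative Euler characteristic, and by Proposition~\ref{uniform}(2b) and (2c) — the bad case being forced into $\chi(\OP)>0$ by Proposition~\ref{uniform}(1) and handled there via the Chow--Wu soliton — the metric $h$ is conformally equivalent to a metric $h'$ on $\OP$ whose Gaussian curvature is nonnegative everywhere; since $\OP$ is compact, this yields $\ric_{(\OP,h')}\ge 0$, so $(\OP,h')$ is an admissible extension with nonnegative Ricci curvature and $\conf(\orb,g)=0$ by Remark~\ref{epsilontrick}. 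If $\chi(\OP)<0$, then $\OP$ is good and by Proposition~\ref{uniform}(2a) the metric $h$ is conformally equivalent to a metric $h_{-1}$ on $\OP$ of constant curvature $-1$; the Gauss--Bonnet theorem for orbisurfaces (Theorem~\ref{thm:gauss_bonnet}) then gives $\vol_{h_{-1}}(\OP)=-2\pi\chi(\OP)$. Since $\ric_{(\OP,h_{-1})}$ has value $-1=-(n-1)$ in dimension two and $\vol_{h_{-1}}(\orb)\le\vol_{h_{-1}}(\OP)=-2\pi\chi(\OP)$, the admissible extension $(\OP,h_{-1})$ satisfies
\[
\vol_{h_{-1}}(\orb)^{\frac2n}\,\ric_{(\OP,h_{-1})} = -\vol_{h_{-1}}(\orb)\,(n-1) \ge -\vol_{h_{-1}}(\OP)\,(n-1) = -\big(-2\pi\chi(\OP)\big)(n-1).
\]
Hence $\conf(\orb,g)\le -2\pi\chi(\OP)=-2\pi(\chi(\orb)+r+\tfrac s2)$, and substituting this into the displayed consequence of Theorem~\ref{h-confinv} completes the proof.

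I do not expect any single step to be a serious obstacle — the statement is essentially a repackaging of Theorem~\ref{h-confinv}, Lemma~\ref{h-orbiembed} and Proposition~\ref{uniform}. The point requiring the most care is the bookkeeping for $\conf$: one must check that conformal rescalings on $\OP$ preserve completeness and the lower Ricci bound, as well as the conformal embedding of $(\orb,g)$; that the quantity $\vol_h(\orb)^{2/n}\ric_{(\OP,h)}$ entering the definition of $\conf$ is insensitive to constant rescalings of the metric, so that normalizing the curvature to $-1$ loses nothing; and that the case $\chi(\OP)>0$ with $\OP$ a \emph{bad} orbisurface really is covered — which is exactly what the soliton step of Proposition~\ref{uniform}(2c) provides.
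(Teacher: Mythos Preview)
Your proof is correct and follows essentially the same approach as the paper: specialize Theorem~\ref{h-confinv} to $n=2$, embed via Lemma~\ref{h-orbiembed} into a closed orbisurface $(\OP,h)$ with $\chi(\OP)=\chi(\orb)+r+\tfrac{s}{2}$, and then bound $\conf(\orb,g)$ by uniformizing $h$ according to the sign of $\chi(\OP)$, using Gauss--Bonnet in the negative case and Remark~\ref{epsilontrick} in the nonnegative case. Your version is slightly more explicit about the bookkeeping (the bad-orbisurface case, preservation of the conformal embedding under rescaling), but the argument is the same.
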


\begin{proof}
By Theorem~\ref{h-confinv}, we  get that for every $k\in\N$
\[
\sigma_k(\orb,g)\ell_g(\partial\orb)\leq C_1\conf(\orb,g)+C_2k.
\]
We now want to bound $\conf(\orb,g)$ from above.  As in the proof of Lemma~\ref{h-orbiembed}, we isometrically embed $(\orb,g)$ in a closed Riemannian orbisurface $(\OP,h)$ as a subdomain, with $\chi(\OP)=\chi(\orb)+r+\frac{s}{2}$. We divide the proof into cases according to the sign of $\chi(\OP)$.

If $\chi(\OP)<0$, then $\OP$ is a good orbisurface and admits a metric $h_0$ of constant curvature $-1$ conformally equivalent to $h$ by Proposition~\ref{uniform}. We have $$\conf(\orb,g)\leq \vol_{h_0}(\orb)\leq \vol_{h_0}(\OP)=-2\pi\chi(\OP)$$ where the equality follows from the Gauss-Bonnet Theorem (Theorem~\ref{thm:gauss_bonnet}).

If $\chi(\OP) \geq 0$, then $\OP$ admits a metric of nonnegative curvature conformally equivalent to $h$ by Proposition~\ref{uniform}. Thus $\conf(\orb,g)=0$ by Remark~\ref{epsilontrick} and the proof is complete.
\end{proof}

\subsection{Summary of the proof of Theorem~\ref{h-orbibd}}\label{sect:h-orbibd_pf}
The proof follows the same lines as that of \cite[Thm.~4.1]{H}.  To bound the $k^{th}$ Steklov eigenvalue using the minimax characterization as in Proposition~\ref{prop.var}, we will construct $k$ test functions $f_1, \dots, f_k$ with disjoint support and observe that
\[
\sigma_k(\orb,g)\leq\max_j \frac{\int_{\orb} |\nabla_g f_j|^2 d\vol_{(\orb,g)}}{\int_{\partial\orb} f_j^2 d\vol_{(\partial\orb,g)}}.
\]

\noindent \textbf{Step 1.} We first introduce a family of disjoint domains on $\orb$ which shall be used as the supports of the test functions. A metric space is said to satisfy the $(2,N,\rho)-$covering property if each ball of radius $0<r\leq\rho$ can be covered by $N$ balls of radius $r/2$.

\begin{lem}\label{PCovering}
 Let 
$(\OP,{h_0})$ be an $n$-dimensional Riemannian orbifold with $\ric_{(\OP,{h_0})}\ge-\alpha(n-1)$, $\alpha\ge0$. Then  the metric space $(\OP,d_{h_0})$, where $d_{h_0}$ is the Riemannian distance on $\OP$, satisfies the $(2,N,\frac{1}{\sqrt{\alpha}})-$covering property, where $N$ depends only on the dimension of $\OP$ and where $\frac{1}{\sqrt{\alpha}}$ is understood to be infinity if $\alpha=0$.
\end{lem}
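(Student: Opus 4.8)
The plan is to run the classical volume-packing argument, with the orbifold Bishop--Gromov inequality (Proposition~\ref{b-g}) substituting for its manifold counterpart; we use that $(\OP,h_0)$ is complete, as it is in all our applications, so that Proposition~\ref{b-g} applies. Fix $p\in\OP$ and a radius $0<r\le 1/\sqrt{\alpha}$ (any $r>0$ if $\alpha=0$), and choose a maximal $r/2$-separated subset $\{x_1,\dots,x_m\}$ of the metric ball $B(p,r)$, i.e.\ $d_{h_0}(x_i,x_j)\ge r/2$ for $i\ne j$ and no further point of $B(p,r)$ can be adjoined. By maximality every point of $B(p,r)$ lies within distance $r/2$ of some $x_i$, so the balls $B(x_1,r/2),\dots,B(x_m,r/2)$ cover $B(p,r)$; it therefore suffices to bound $m$ by a constant depending only on $n$.

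To bound $m$, observe that the balls $B(x_i,r/4)$ are pairwise disjoint by the triangle inequality and all lie inside $B(p,2r)$, while for every $j$ we have $B(p,2r)\subseteq B(x_j,3r)$ since $d_{h_0}(p,x_j)<r$. Summing volumes and choosing $j$ to minimize $\vol_{h_0}(B(x_j,r/4))$ gives
\[
m\le\frac{\vol_{h_0}\big(B(p,2r)\big)}{\vol_{h_0}\big(B(x_j,r/4)\big)}\le\frac{\vol_{h_0}\big(B(x_j,3r)\big)}{\vol_{h_0}\big(B(x_j,r/4)\big)}.
\]
By Proposition~\ref{b-g} with $\kappa=-\alpha$, the function $\rho\mapsto\vol_{h_0}(B(x_j,\rho))/v(n,-\alpha,\rho)$ is nonincreasing on $(0,\infty)$, so the right-hand side is at most $v(n,-\alpha,3r)/v(n,-\alpha,r/4)$. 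The isotropy order of $x_j$ affects Proposition~\ref{b-g} only through the limit of this quotient as $\rho\to0$ and is irrelevant here, since only monotonicity is used.

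It remains to check that $v(n,-\alpha,3r)/v(n,-\alpha,r/4)\le N$ for some constant $N=N(n)$ whenever $r\le 1/\sqrt{\alpha}$. Using the scaling identity $v(n,-\alpha,s)=\alpha^{-n/2}\,v(n,-1,\sqrt{\alpha}\,s)$, this ratio equals $v(n,-1,3t)/v(n,-1,t/4)$ with $t:=\sqrt{\alpha}\,r\in(0,1]$, which is a continuous function of $t$ on $(0,1]$ with finite limit $12^{n}$ as $t\to0^{+}$ (the Euclidean value); hence it extends continuously to $[0,1]$ and is bounded there by some $N(n)$. When $\alpha=0$ one argues directly in $\R^n$, where the ratio is exactly $12^{n}$ for every $r$. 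Taking $\rho=1/\sqrt{\alpha}$ (or $\rho=\infty$ if $\alpha=0$) yields the $(2,N,\rho)$-covering property. The one delicate point is this final uniformity step: the packing count must not blow up as $\alpha\to0$ or as $r\to0$, and it must be insensitive to whether $p$ or the $x_i$ are singular points; both facts follow from the scale invariance of the comparison ratio together with the fact that only the monotonicity half of Proposition~\ref{b-g}, not its normalization, is invoked.
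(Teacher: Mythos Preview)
Your proof is correct and follows essentially the same approach as the paper: both run the standard packing argument (maximal separated set, disjoint small balls inside a larger ball) and then apply the orbifold Bishop--Gromov inequality (Proposition~\ref{b-g}) to bound the resulting volume ratio by a dimensional constant on the scale range $r\le 1/\sqrt{\alpha}$. The only cosmetic differences are that the paper cites \cite{H} for the packing inequality and obtains the explicit bound $N=\lfloor 2^{4n}e^{4(n-1)}\rfloor$ via the estimate $\int_0^{4r}\sinh^{n-1}(\sqrt{\alpha}t)\,dt\big/\int_0^{r/4}\sinh^{n-1}(\sqrt{\alpha}t)\,dt\le 2^{4n}e^{4(n-1)\sqrt{\alpha}r}$, whereas you use slightly tighter radii ($3r$ versus $4r$) and a scaling/compactness argument in place of an explicit formula.
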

\begin{proof}

One can show that the minimal  number $m$ of balls of radius $r/2$ needed to cover a ball $B(x,r)$ of radius $r$ in  $(\OP,h_0)$ is bounded by
\[
m\leq\sup_{p\in B(x,r)}\frac{\vol_{(\OP,h_0)}(B(p,4r))}{\vol_{(\OP,h_0)}(B(p,\frac{r}{4}))}.
\]
(See, for example, the beginning of the proof of Proposition~3.1 in \cite{H}, through the second displayed formula.) Applying the Relative Volume Comparison Theorem for orbifolds (Proposition~\ref{b-g}) and letting  $v(n,0,r)$ denote the volume of an $n$-dimensional ball of radius $r$ in the simply connected space form of curvature zero, we have 
$$m\leq
\left\{\begin{array}{ll}
\frac{v(n,0,4r)}{v(n,0,r/4)}=2^{4n}& \text{if }\alpha=0,\\
\frac{\int_0^{4r}\sinh^{n-1}\sqrt{\alpha}t\ dt}{\int_0^{r/4}\sinh^{n-1}\sqrt{\alpha}t\ dt}\leq2^{4n}e^{(n-1)\sqrt{\alpha}4r}&\text{if }\alpha>0.
\end{array}\right.$$
Thus a ball of radius $r$, with $0<r\le\frac{1}{\sqrt{\alpha}}$, can be covered by $N=\lfloor2^{4n}e^{4(n-1)}\rfloor$  balls of radius $r/2$.   
\end{proof}

Let $(\OP,h_0)$ be an orbifold satisfying the hypotheses of Lemma~\ref{PCovering}. The fact that the locally compact, complete metric space $(\OP,d_{h_0})$ satisfies the $(2,N,\rho)-$covering property with $\rho=\frac{1}{\sqrt{\alpha}}$ allows us to apply \cite[Thm.~2.1]{H} (see also \cite{HTh} for general $\rho$) to obtain:

\begin{prop}\label{capacitors} Let $\nu$ be any finite, non-atomic Borel measure on $\OP$.
There exists a constant $c_1$ depending only on $n$ such that for each $k\in\N$, there exist two families of open sets $\{F_i\}_{i=1}^k$, $\{G_i\}_{i=1}^k$, $\bar{F_i}\subset G_i$ such that
\begin{itemize}
     \item[(i)] $G_i$ are mutually disjoint,
     \item[(ii)] $\nu_g(F_i)\geq\frac{\nu_g(\OP)}{c_1k}$; 
     \item[(iii)] Either $(a)$ all $F_i$ are annuli, i.e., $F_i=A(x_i,r_i,R_i)$ for some $x_i\in\OP$ and $0\leq r_i<R_i$, and $G_i=2F_i=A(x_i,\frac{r_i}{2},2R_i)$ with $2R_i<\frac{1}{\sqrt{\alpha}}$, or $(b)$ all $F_i$ are domains in $\OP$ and $G_i=\{x\in\OP:d_{h_0}(x,F_i)< r_0:=\frac{1}{1600\sqrt{\alpha}}\}$.
\end{itemize}
\end{prop}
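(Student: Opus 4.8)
The plan is to obtain Proposition~\ref{capacitors} as a direct application of the metric-measure-space decomposition theorem \cite[Thm.~2.1]{H} (see also \cite{HTh}) to the triple $(\OP, d_{h_0}, \nu)$, where $d_{h_0}$ is the Riemannian distance. That theorem is purely metric-measure-theoretic: starting from a complete, locally compact metric space satisfying the $(2,N,\rho)$-covering property together with a finite, non-atomic Borel measure, it produces, for each $k\in\N$, exactly the two families $\{F_i\}_{i=1}^k$ and $\{G_i\}_{i=1}^k$ with $\overline{F_i}\subset G_i$, with the $G_i$ mutually disjoint, with $\nu(F_i)\ge \nu(\OP)/(c_1k)$, and with the annulus/remote-domain dichotomy stated in~(iii); moreover the constants $c_1$ and $r_0$ depend only on $N$ and $\rho$, and the particular value $r_0 = 1/(1600\sqrt\alpha)$ is the one that emerges from the argument in \cite{H}.

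Concretely, I would first check that the ambient space is admissible for that theorem: $(\OP, d_{h_0})$ is a genuine metric space (the orbifold Riemannian distance is a metric), it is complete because $(\OP,h_0)$ is assumed complete as a length space, and it is locally compact. Next I would invoke Lemma~\ref{PCovering} to see that $(\OP, d_{h_0})$ satisfies the $(2,N,\rho)$-covering property with $\rho = 1/\sqrt\alpha$ and $N = \lfloor 2^{4n}e^{4(n-1)}\rfloor$, which depends only on $n$ (with the convention $1/\sqrt\alpha = \infty$ when $\alpha=0$, in which case one invokes the version of the decomposition theorem valid for an arbitrary covering radius \cite{HTh}). Since $\nu$ is finite and non-atomic by hypothesis, all hypotheses of \cite[Thm.~2.1]{H} hold, and applying it with the given $k$ produces $\{F_i\}$ and $\{G_i\}$; one then simply transcribes its conclusion, obtaining (i) and (ii) with some $c_1 = c_1(N) = c_1(n)$, and (iii) in the two cases --- in case~(a) the $F_i$ are annuli $A(x_i,r_i,R_i)$ and $G_i = 2F_i = A(x_i, r_i/2, 2R_i)$ with $2R_i < 1/\sqrt\alpha$, and in case~(b) the $F_i$ are domains and $G_i = \{x : d_{h_0}(x,F_i) < r_0\}$.

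The main point is that there is essentially no obstacle here beyond this bookkeeping: the decomposition argument of \cite{H} (itself a refinement of constructions of Grigor'yan--Netrusov--Yau and Colbois--Maerten) uses only the covering property of the metric space and the non-atomicity of the measure, and never any smooth or manifold structure. The one place where the orbifold structure genuinely enters --- establishing the covering property via volume comparison --- has already been handled in Lemma~\ref{PCovering} using the Bishop--Gromov comparison theorem for orbifolds (Proposition~\ref{b-g}). So the only care required is to confirm that completeness and local compactness of $(\OP,h_0)$ as a length space are precisely the ambient-space hypotheses demanded by \cite[Thm.~2.1]{H}, after which the proposition follows by citation.
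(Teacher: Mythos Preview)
Your proposal is correct and matches the paper's approach exactly: the paper also deduces Proposition~\ref{capacitors} by noting that $(\OP,d_{h_0})$ is a complete, locally compact metric space satisfying the $(2,N,\rho)$-covering property with $\rho=\frac{1}{\sqrt{\alpha}}$ by Lemma~\ref{PCovering}, and then citing \cite[Thm.~2.1]{H} (and \cite{HTh} for general $\rho$) directly.
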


We apply the proposition with 
$$\nu_g(W):=\vol_{(\partial\orb,g)}(W\cap\partial\orb).$$

\noindent \textbf{Step 2.} We define our test functions to have support in the open sets $G_i$ defined in Proposition~\ref{capacitors}. If the families $\{F_i\}_{i=1}^k$, $\{G_i\}_{i=1}^k$ satisfy case $(iii)(a)$ of Proposition~\ref{capacitors}, we set
\[f_j(x)=\left\{
\begin{array}{clll}
    1 & {\rm if } & x\in F_j \\
    \frac{2d_{h_0}(x,B(x_j,r_j/2))}{r_j} & {\rm if } & x\in A(x_j,r_j/2,r_j)=B(x_j,r_j)\setminus B(x_j,r_j/2)\\
    1-\frac{d_{h_0}(x_j,B(x,R_j))}{R_j} & {\rm if } & x\in A(x_j,R_j,2R_j)=B(x_j,2R_j)\setminus B(x_j,R_j)\\
    0  & {\rm if } & x\in \OP\setminus G_j\
\end{array}\right..\]
If they satisfy case $(iii)(b)$, we set
\[f_j(x)=\left\{
\begin{array}{cll}
    1 & {\rm if } & x\in F_j \\
    1-\frac{d_{h_0}(x,F_j)}{r_0} & {\rm if } & x\in \left(G_j\setminus F_j\right) \\
    0 & {\rm if } & x\in \OP\setminus G_j  \
\end{array}\right..\]

The computation of the bound on $\sigma_k(\orb,g)$ is then identical to that found in the proof of \cite[Thm.~4.1]{H}\footnote{The proof of \cite[Thm.~4.1]{H} uses the fact that the $f_i$ are Lipschitz functions and thus $\nabla_{h_0}f_i$ exist off a set of measure zero and have bounded norm. In the orbifold case, the $f_i$ are Lipschitz and the set of measure zero includes the orbifold singular set.}.

\subsection{Upper bounds for Neumann Laplace eigenvalues on orbifolds}\label{laplace}
The method used to prove Theorem~\ref{h-orbibd} was also used to obtain upper bounds for the eigenvalues of the Laplace--Beltrami operator on Riemannian manifolds in \cite{H}. We remark that Theorem~1.2 of \cite{H} for the Laplace spectrum remains true in the orbifold setting. More precisely, under the assumptions of Theorem~\ref{h-orbibd}, for every $k\in \N$, the $k^{\rm th}$ Neumann Laplace eigenvalue $\lambda_k(\orb,g)$ satisfies
\begin{equation*}
\lambda_k(\orb,g)\vol_g(\orb)^{\frac{2}{n}}\le C_1\alpha\vol_{h_0}(\orb)^{\frac{2}{n}}+C_2k^{\frac{2}{n}},
\end{equation*}
where $C_1$ and $C_2$ are positive constants depending only on $n$.
Moreover, let $(\orb,g)$ be a compact $n$-dimensional Riemannian orbifold with Neumann boundary condition. Then, in the notation of Theorem~\ref{h-confinv}, for every $k\in \N$, the $k^{\rm th}$ Neumann Laplace eigenvalue $\lambda_k(\orb,g)$ satisfies
\begin{equation*}\lambda_k(\orb,g)\vol_g(\orb)^{\frac{2}{n}}\le C_1\conf(\orb,g)+C_2k^{\frac{2}{n}},
\end{equation*}
where again $C_1$ and $C_2$ are positive constants depending only on $n$.
In the 2-dimensional case, using the notation of Theorem~\ref{h-2dupbd}, we have
\begin{equation*}
\lambda_k(\orb,g)\vol_g(\orb)\le
\begin{cases}
     Bk, & \text{if}\ \chi(\orb)+r+\frac{s}{2}\geq 0 \\
     -A{(\chi(\orb)+r+\frac{s}{2})}+Bk, & \text{if}\ {\chi(\orb)+r+\frac{s}{2}}< 0
,\end{cases}
\end{equation*}
where $A$ and $B$ are positive universal constants.

\bibliographystyle{plain}
\bibliography{orbifoldbib}

\end{document}